\theoremstyle{plain}
\newtheorem{theorem}{Theorem}%[section]
\newtheorem{lemma}[theorem]{Lemma}
\newtheorem{corollary}[theorem]{Corollary}
\newtheorem{proposition}[theorem]{Proposition}
\newtheorem{definition}{Definition}
\theoremstyle{remark}
\newtheorem{remark}[theorem]{Remark}
\begin{document}
\title[Wave turbulence theory  for elastic beam waves]{On the wave turbulence theory: ergodicity  for the elastic beam wave equation}

\author[B. Rumpf]{Benno Rumpf }
\address{Department of Mathematics, Southern Methodist University, Dallas, TX 75275, USA} 
\email{brumpf@mail.smu.edu}

\author[A. Soffer]{Avy Soffer}
\address{Mathematics Department, Rutgers University, New Brunswick, NJ 08903 USA.} 
\email{soffer@math.rutgers.edu}

\author[M.-B. Tran]{Minh-Binh Tran}
\address{Department of Mathematics, Texas A\&M University College Station, TX 77843, USA}
\email{minhbinh@tamu.edu} 
\maketitle
\begin{abstract}  We analyse a 3-wave kinetic equation, derived from the elastic beam wave equation on the lattice. The ergodicity  condition states that two distinct wavevectors are supposed to be connected by a finite number of collisions.  In this work, we  prove that  the ergodicity condition is violated and  the equation domain is broken into disconnected domains, called no-collision and collisional invariant regions.  If one starts with a general initial condition, whose energy is finite, then in the long-time limit, the solutions of the 3-wave kinetic equation remain unchanged on the no-collision region and relax to local equilibria on the disjoint collisional invariant regions. To our best knowledge, this is the first time that the violation of the ergodicity condition is observed and proved  for a kinetic equation.   
 \end{abstract}

{\bf Keyword:}   
wave turbulence, convergence to equilibrium, ergodicity condition

%{\bf MSC:} {.}

\tableofcontents
\section{Introduction}
 Having the origin in the works of  Peierls \cite{Peierls:1993:BRK,Peierls:1960:QTS},  Hasselmann \cite{hasselmann1962non,hasselmann1974spectral},  Benney-Saffman-Newell \cite{benney1969random,benney1966nonlinear},  Zakharov \cite{zakharov2012kolmogorov},   wave kinetic  equations have been shown to play  important roles in a vast range of physical examples and this is why a huge and still growing number of situations have used WT theory: inertial waves due to rotation; Alfv\'en wave turbulence in the solar wind; waves in plasmas of fusion devices; and many others, as discussed in the books of   Zakharov et.al. \cite{zakharov2012kolmogorov},  Nazarenko
\cite{Nazarenko:2011:WT} and the review papers of Newell and Rumpf \cite{newell2011wave,newell2013wave}.

We consider the quadratic elastic  beam wave  equation (Bretherton-type equation) (see  Bretherton \cite{bretherton1964resonant}, Benney-Newell \cite{benney1967propagation}, Love \cite{love2013treatise})
\begin{equation}
	\label{QuadraticNLS}\begin{aligned}
&	\frac{\partial^2\psi}{\partial T^2}(x,T) \ + \ (\Delta+c)^2 \psi(x,T)   \ + \lambda\psi^2(x,T) \ = \ 0,\\
\psi(x,0) \ & = \ \psi_0(x), \ \ \frac{\partial\psi}{\partial T}(x,0) \ = \ \psi_1(x),\end{aligned}
\end{equation}
for $x$ being on 
{ $\mathbb{Z}^3$}, $T\in\mathbb{R}_+$, $c\in\mathbb{R}$ is some real constant, $\lambda$ is a small constant describing the smallness of the nonlinearity.  Equations of type  \eqref{QuadraticNLS} have been widely studied in  control theory, and have been shown to have a Schr\"odinger structure (see, for instance, Burq \cite{burq1993controle}, Fu-Zhang-Zuazua \cite{fu2006optimality}, Haraux \cite{haraux1989series}, Lebeau \cite{lebeau1992control}, Lions \cite{lions1988controlabilite},  and Zuazua-Lions \cite{zuazua1987controlabilite}.) The analysis of \eqref{QuadraticNLS}  is also an interesting mathematical question of current interest (see, for instance,  Hebey-Pausader \cite{hebey2008introduction}, Levandosky-Strauss \cite{levandosky2000time}, Pausader \cite{pausader2007scattering} Pausader-Strauss \cite{pausader2009analyticity}.)

We obtain the 3-wave kinetic equation
\begin{equation}\label{PhononEqC}
\begin{aligned}
\partial_t f(k,t) \ & =  \ Q_c[f](k),\ \ \ \ f(k,0) \ = \ f_0(k),\ \  \forall k\in\mathbb{T}^3,\\
Q_c[f](k)\ & = \ \int_{\mathbb{T}^6}K(\omega,\omega_1,\omega_2)\delta(k-k_1-k_2)\delta(\omega-\omega_1-\omega_2)[f_1f_2-ff_1-ff_2]\mathrm{d}k_1\mathrm{d}k_2\\
& - \ 2\int_{\mathbb{T}^6}K(\omega,\omega_1,\omega_2)\delta(k_1-k-k_2)\delta(\omega_1-\omega-\omega_2)[f_2f-ff_1-f_1f_2]\mathrm{d}k_1\mathrm{d}k_2,
\end{aligned}
\end{equation} 
where $K(\omega,\omega_1,\omega_2)=[\sqrt{8}\omega(k)\omega(k_1)\omega(k_2)]^{-1}$, with
\begin{equation*}
	\label{DispersionRelation}
	\omega(k) \ =  \omega_0 \ + \ \sum_{j=1}^32\Big(1-\cos(2\pi k^j)\Big),
\end{equation*} 
and $\mathbb{T}^d$ is the periodic torus $[0,1]^d$.

One of the main challenges in understanding the behaviors of solutions to the 3-wave kinetic equations is the so-called {\it ergodicity}, which is quite typical for 3-wave processes.  Ergodicity has played a very important role and has a long history in physics \cite{aoki2002nonequilibrium,lee2005thermal,lefevere2004perturbative,lepri2000memory} and we refer to the lecture notes   \cite{Spohn:TPB:2006}[Section 17] for a more detailed discussion. To define ergodicity, we will need the concept of  the connectivity between two wave vectors $k$ and $k'$, which we briefly discuss   here, leaving the precise definition for later. Given a wave vector $k$, a wave vector $k'$ is understood to be connected to $k$ in a collision if either $\omega(k')=\omega(k)+\omega(k'-k)$,  $\omega(k)=\omega(k')+\omega(k-k')$, or $\omega(k+k')=\omega(k)+\omega(k')$.

{\bf Ergodicity Condition (E):} 
{\it For every $k,k'\in\mathbb{T}^3\backslash\{0\},$ there is a finite sequence of collisions such that $k$ is connected to $k'$. }

When the  Ergodicity Condition (E) is violated, the system is partitioned into smaller subsystems   which are dynamically disconnected and each subsystem 
thermalizes by itself.

It was shown that (see \cite{Spohn:TPB:2006}) under the Ergodicity Condition (E), the only stationary solutions of the spatially homogeneous Boltzmann equations \eqref{PhononEqC} take the forms
$$\frac{1}{\beta \omega(k)},$$
in which $\beta$ can be computed via the conservation laws. 

The aim of this work is to develop a rigorous analysis  and prove that the  Ergodicity Condition (E) is violated for the equation \eqref{PhononEqC}. We will show that  the domain of integration is broken into disconnected domains. Those subregions are then proved to be dynamically disconnected. There is one region, in which if one starts with any initial condition, the solutions remain unchanged as time evolves.  In general, the equilibration temperature will differ from region to region. We call it the ``no-collision region''. The rest of the domain is divided into disconnected regions, each has their own local equilibria. If one starts with any initial condition, whose energy is finite on one subdomain, the solutions will relax to the local equilibria of this subregion, as time evolves, and as thus each subsystem 
thermalizes by itself. Those subregions are named ``collisional invariant regions'', due to the fact that we can rigorously establish  unique local collisional invariants on each of them, using the conservation of energy. This confirms  Spohn's prediction and enlightening physical intuitions \cite{Spohn:TPB:2006} on the behavior of 3-wave systems. To our best knowledge, this is the first example in which the important ergodicity condition is violated for a kinetic equation. { We also remark that  the 3-wave kinetic equation considered in this work describes the translation
invariant system and the results
proven  (decomposition of the frequency space $\mathbb{T}^d$ into disjoint
equivalence classes under connectedness via collisions, that are
invariant under the flow) do not hold for the spatially inhomogenous version of the equation.
}

In addition to 3-wave kinetic equations, 4-wave kinetic equations have also played an important role in wave turbulence and have been first studied in the work of Escobedo and Velazquez in \cite{EscobedoVelazquez:2015:FTB,EscobedoVelazquez:2015:OTT} as well as  several other works \cite{ampatzoglou2024inhomogeneous,collot2024stability,dolce2024convergence,escobedo2024instability,germain2017optimal,menegaki20222,staffilani2024condensation,staffilani2024energy}. 

{\bf Acknowledgements.}  B. R.  is  funded in part by  a grant from the Simons
Foundation (No. 430192). A. S. is in part by the  Simons Foundation grant number  851844 and NSF-DMS grant number 220931. 
M.-B. T is  funded in part by  a   Humboldt Fellowship,   NSF CAREER  DMS-2303146, and NSF Grants DMS-2204795, DMS-2305523,  DMS-2306379. We would like to thank Prof. Herbert Spohn, Prof. Gigliola Staffilani and Prof. Enrique Zuazua for fruitful discussions on the topic. We would like to thank the referees for the important remarks.

%{\bf Conflict of interest statement.}  The authors have no conflicts of interest to declare. All co-authors have seen and agree with the contents of the manuscript and there is no financial interest to report. We certify that the submission is original work and is not under review at any other publication.
\section{From the Bretheton equation to the 3-wave kinetic  equation}\label{Sec:Derivation}
 { For the sake of completeness, in this section, we recall the formal derivation of the 3-wave kinetic  equation from the Bretheton equation for the general dimension $d>2$. 
We follow  the same strategy   of \cite{Spohn:TPB:2006,LukkarinenSpohn:WNS:2011} to put the equation on a lattice
\begin{equation}
	\label{Lattice}
	\Lambda \ = \ \Lambda(D) \ = \ \left\{1, \dots,2D
	\right\}^d, 
\end{equation}}
for some constant $D\in\mathbb{N}$.

%We consider the case $R(\psi)$ being $\psi^2$. The other case can be done similarly.

The  discretized equation   is now
\begin{equation}
	\label{LatticeDynamics}\begin{aligned}
		\partial_{TT}\psi(x,T) \ = &  \ \ -\sum_{y\in  \Lambda} O_1(x-y)\psi(y,T) \ - \ \lambda ({\psi(x,T)})^2, \\
		\psi(x,0) \ & = \ \psi_0(x), \ \partial_{T}\psi(x,0) \  = \ \psi_1(x), \ \forall (x,T)\in\Lambda\times \mathbb{R}_+,\end{aligned}
\end{equation}
where $O_1(x-y)$ is a finite difference operator that we will express below in the Fourier space.	We remark that a similar beam dynamics of  non-acoustic chains has also been considered in \cite{basile2016thermal}[Section 7].
 To obtain the lattice dynamics,  we introduce  the Fourier transform 
\begin{equation}
	\label{Def:Fourier}\hat  \psi(k)=\sum_{x\in\Lambda} \psi(x) e^{-2\pi {\bf i} k\cdot x}, \quad k\in \Lambda^* = \Lambda^*(D) =  \left\{0,\cdots,\frac{2D}{2D+1}\right\}^d,
\end{equation}
{ which is a subset of the $d$-dimensional torus $[0,1]^d$. }
 We also define the mesh size to be \begin{equation}\label{Mesh} h^d=\left(\frac{1}{2D+1}\right)^d.\end{equation} 
At the end of this standard procedure, \eqref{LatticeDynamics}  can  be rewritten in the Fourier space as a system of ODEs
\begin{equation}
	\label{LatticeDynamicsFourier}\begin{aligned}
		\partial_{TT}\hat\psi(k,T) \ = \  &   -\omega(k)^2\hat\psi(k,T)\\
		& \ - \lambda \sum_{k_1,k_2\in\Lambda^*}{\hat\psi(k_1,T)}\delta(k-k_1-k_2)\hat\psi(k_2,T),\\
		\hat\psi(k,0) \ & = \ \hat\psi_0(k) ,\ \ \ \partial_{T}\hat\psi(k,0) \ = \ \hat\psi_1(k) ,  \end{aligned}
\end{equation}
where the  dispersion relation  takes the  discretized form 
\begin{equation}
	\label{NearestNeighbordA}
	\omega_k \ = \ \omega(k) \ = \ \sin^2(2\pi h k^1) + \cdots + \sin^2(2\pi  k^d) + c,
\end{equation}
with $k=(k^1,\cdots,k^d)$.

We define the inverse Fourier transform to be 
\begin{equation}\label{Def:FourierInverse}
	f(x)= \sum_{k\in\Lambda_*} \hat  f(k) e^{2\pi {\bf i} k\cdot x}.
\end{equation}
We also use the following notations
\begin{equation}
	\label{Shorthand1}
	\int_{\Lambda}\mathrm{d}x \ = \  h^d\sum_{x\in\Lambda},\ \ \ \ 
	\langle f, g\rangle \ = \ h^d \sum_{x\in\Lambda}f(x)^* g(x),
\end{equation}
where   if $z\in \mathbb{C}$, then $\bar{z}$ is the complex conjugate, as well as the  Japanese bracket
\begin{equation}
	\label{JapaneseBracket}
	\langle x\rangle \ = \ \sqrt{1+|x|^2}, \ \ \forall x\in\mathbb{R}^d.
\end{equation}
And
\begin{equation}
	\sum_{k\in\Lambda^*} \ = \ \int_{\Lambda^*}\mathrm{d}k.
\end{equation}

{Moreover, for any $N\in\mathbb{N}\backslash\{0\}$, following precisely \cite{LukkarinenSpohn:WNS:2011}[equation (2.9)], we define the delta function $\delta_N$ on $(\mathbb{Z}/N)^d$ as
\begin{equation}
	\label{Def:Delta}\delta_N(k) = |N|^d\mathbf{1}(k \mbox{ mod } 1 \ = \ 0), \ \ \ \forall k\in (\mathbb{Z}/N)^d.
\end{equation}
In our computations, we omit the sub-index $N$ and simply write
\begin{equation}
	\label{Def:Delta1}\delta(k) = |N|^d\mathbf{1}(k \mbox{ mod } 1 \ = \ 0), \ \ \ \forall k\in (\mathbb{Z}/N)^d.
\end{equation} 
\begin{remark}
	Note that, the above definition of the discrete delta function follows the classical definition of Lukkarinen-Spohn \cite{LukkarinenSpohn:WNS:2011}[equation (2.9)], commonly used in the derivation of wave kinetic equations. The factor $|N|^d$ is needed as it guarantees the convergence of the discrete delta function to the continuum delta function in the limit of $N$ going to $\infty$. 
\end{remark}}
Equation \eqref{LatticeDynamicsFourier} can now be expressed as a coupling system
\begin{equation}
\label{LatticeDynamicsSystem}\begin{aligned}
\frac{\partial }{\partial T}{q}(k,T) \ & = \ {p}(k,T),\\
\frac{\partial }{\partial T}{p}(k,T) \ & = \ -\omega^2(k){q}(k,T)\\
&\ \ \  \ -  \lambda\int_{(\Lambda^*)^2}\mathrm{d}k_1\mathrm{d}k_2\delta(k-k_1-k_2){q}(k_1,T){q}(k_2,T), \\
q(k,0) \ & = \ \hat\psi_0(k), \ \ p(k,0) \  = \ \hat\psi_1(k), \ \ \ \ \forall (k,T)\in\Lambda^*\times \mathbb{R}_+,\end{aligned}
\end{equation}
which, under Spohn's transformation  (see \cite{Spohn:TPB:2006})
\begin{equation}\label{Tranformation1}
a(k,T) \ =  \ \frac{1}{\sqrt{2}}\Big[{\omega(k)^\frac12}{q}(k,T)\ +  \ \frac{i}{{\omega(k)^\frac12}}{p}(k,T)\Big],
\end{equation}
leads to the following system of ordinary differential equations
\begin{equation}
\begin{aligned}\label{ODEs}
\frac{\partial}{\partial T} a(k,T) \ & =  \ i\omega(k)a(k,T) \ - \ i\lambda\int_{(\Lambda^*)^2}\mathrm{d}k_1\mathrm{d}k_2\delta(k-k_1-k_2)\times\\
&  \times [8\omega(k)^21\omega(k_1)^2\omega(k_2)^2]^{-\frac12}\Big[a(k_1,T) \ + \ a^*(-k_1,T)\Big]\Big[a(k_2,T) \ + \ a^*(-k_2,T)\Big],\\
a(k,0) \ & = \ a_0(k) \ = \ {\frac{1}{\sqrt{2}}\Big[{\omega(k)}{q}(k,0)\ +  \ \frac{i}{{\omega(k)}}{p}(k,0)\Big]},  \forall (k,T)\in\Lambda^*\times \mathbb{R}_+.
\end{aligned}
\end{equation}
%
%
%We denote by ${a},{a}^*$  the vectors $(a_k)_{k\in\Lambda^*}$, $(a_k^*)_{k\in\Lambda^*}$, and  set 
%\begin{equation}
%	\mathcal{H}({a},{a}^*) \ = \ \mathcal{H}_1({a},{a}^*)  \ + \ \lambda\mathcal{H}_2({a},{a}^*),
%\end{equation}
%with 
%$$\mathcal{H}_1({a},{a}^*)  \ = \ \sum_{k\in\Lambda^*}\frac{1}{2}\omega_k|{a}_k|^2,$$
%$$\mathcal{H}_2({a},{a}^*)  \ = \ \sum_{k,k_1,k_2\in\Lambda^*}\delta(k-k_1-k_2)\overline{{a}_{k_1}{a}_{k_2}}\bar{a}_{k}^*.$$ 

%We then obtain the system
%\begin{equation}\label{HamiltonODEs}
%	\mathrm{d}{a}_k \ = \ {\bf i}\frac{\partial\mathcal{H}(a,a^*)}{\partial {a}_k^*}\mathrm{d}t.\end{equation}

In order to absorb the quantity ${ i}\omega(k)\hat{a}(k,\sigma,T)$ on the right hand side of the above system, we set
\begin{equation}
	\label{HatA}
	\alpha(k,T) \ = \ {a}(k,T)e^{-{ i} \omega(k)T}.
\end{equation}
 The following system can be now derived for  $\alpha_T(k)$
\begin{equation}
	\begin{aligned}\label{StartPoint}
	 &	\frac{\partial}{\partial T} \alpha(k,T) \ =  - \ { i}\sigma\lambda \sum_{k_1,k_2\in\Lambda^*}\delta(k-k_1-k_2)[8\omega(k)^2\omega(k_1)^2\omega(k_2)^2]^{-\frac12}\times\\
		& \ \  \ \ \  \ \times\Big[ \alpha(k_1,T) \ + \  \alpha^*(-k_1,T)\Big]\Big[ \alpha(k_2,T) \ + \  \alpha^*(-k_2,T)\Big]e^{-{ i}T(-\omega(k_1)-\omega(k_2)+\omega(k))}.
	\end{aligned}
\end{equation}

Consider the two-point  correlation function 
\begin{equation}
	\label{Expectation}
	f_{\lambda,D}(k,T) \ = \ \langle {\alpha}_{T}(k,-1){\alpha}_{T}(k,1)\rangle.
\end{equation}
In the limit of  $D\to \infty$, $\lambda\to 0$ and $T=\lambda^{-2}t=\mathcal{O}(\lambda^{-2})$, the two-point correlation function 
$f_{\lambda,D}(k,T)$ has the limit
$$\lim_{\lambda\to 0, D\to\infty} f_{\lambda,D}(k,\lambda^{-2}t) = f(k,t)$$
which solves the 3-wave  equation \eqref{PhononEqC}, by the standard formal derivation of \cite{Spohn:TPB:2006}.  
\begin{remark}\label{Remarkdelta}
	As a consequence of the definition \eqref{Def:Delta}-\eqref{Def:Delta1}, the delta function $\delta(k-k_1-k_2)$ in the collision operator of \eqref{PhononEqC} means that there exists a vector $z\in\mathbb{Z}^d$ such that $k=k_1+k_2+z$. 
\end{remark}

\section{Main results}
Let us first normalize the dispersion $\omega$ as
\begin{equation}
	\label{DispersionRelation}
	\omega(k) \ =  \omega_0 \ + \ \sum_{j=1}^32\Big(1-\cos(2\pi k^j)\Big),
\end{equation}
where $2<\omega_0<3$, and $k=(k^1,k^2,k^3)$. This will result in  an addition factor $4$ comparison to the dispersion relation defined in \eqref{NearestNeighbordA}, leading to a factor of $4$ to the kernel    $K(\omega,\omega_1,\omega_2)$.
In our proof,  we suppose  $K(\omega,\omega_1,\omega_2)$ is $[\omega(k)\omega_1(k)\omega_1(k)]^{-1}$ for the sake of simplicity.

For $\infty>  m\ge 1$, let $\mathcal{S}$ be a Lebesgue measurable subset of $\mathbb{T}^3$ such that its measure is strictly positive, we introduce the function space ${L}^m(\mathcal{S})$, defined by the norm
\begin{equation}\label{Lm}
	\|f\|_{{L}^m(\mathcal{S})}:=\left(\int_{\mathcal{S}} |f(p)|^m \mathrm{d}p\right)^\frac{1}{m}.
\end{equation}
In addition, we also need the space ${L}^\infty(\mathcal{S})$, defined by the norm
\begin{equation}\label{Linf}
	\|f\|_{{L}^\infty(\mathcal{S})}:=\mathrm{esssup}_{p\in \mathcal{S}} |f(p)|.
\end{equation}
We denote by $C^m(\mathcal{S})$, $m=0,1,2,\dots$, the restrictions of all continuous and $m$-time differentiable functions on $\mathbb{T}^3$ onto $\mathcal{S}$. The space $C^0(\mathcal{S})=C(\mathcal{S})$ is endowed with the usual sup-norm \eqref{Linf}.
In addition, for any normed  space $(Y,\|\cdot\|_Y)$, we define
\begin{equation}\label{CY} 
	C([0,T),Y):=\Big\{F:[0,T) \to Y \; \big| F \mbox{ is continuous from $[0,T)$ to $Y$}\Big\}
\end{equation}
and
\begin{equation}\label{C1Y} 
	C^1((0,T),Y):=\Big\{F:(0,T) \to Y \; \big| F \mbox{ is continuous and differentiable from $(0,T)$ to $Y$}\Big\},
\end{equation}
for any $T\in(0,\infty]$. The above definitions can also be extended to the spaces $C([0,T],Y)$, $C^1((0,T],Y)$ for any $T\in(0,\infty)$.

Let us state our main theorem.
\begin{theorem}
	\label{Thm:MainClassical}
	Under the assumption that there exists a positive, classical solution $f$ in $C([0,\infty), C^1(\mathbb{T}^3))\cap C^1((0,\infty), $ $C^1(\mathbb{T}^3))$ of \eqref{PhononEqC}, with the initial condition $f_0\in C(\mathbb{T}^3)$, $f_0(k)\ge 0$ for all $k\in\mathbb{T}^3$. 
	
{There exist  subsets $ \mathfrak{V},\mathfrak{I}\subset\mathbb{T}^3$ such that the torus $\mathbb{T}^3$ can be decomposed into disjoint subsets as follows
{	\begin{equation}
		\label{Thm:MainClassical:1}
		\mathbb{T}^3 \ = \ \mathfrak{I} \cup\bigcup_{x\in \mathfrak{V}}\mathcal{S}(x),
	\end{equation}}
where $\mathcal{S}(x)\cap\mathcal{S}(y)=\emptyset$ and  $\mathcal{S}(x)\cap \mathfrak{I}= \emptyset$ for $x,y\in \mathfrak{V}$.}  The set $\mathfrak{I}$ is not empty and is called the ``no-collision region''. The set $\mathcal{S}(x)$ is called the ``collisional-invariant region''.   The solution $f$ behaves differently on each sub-region.
	\begin{itemize}
		\item[(I)] On $\mathfrak{I}$ the solution stays the same for all time 
		$$f(t,k)=f_0(k),\ \ \ \  \forall t\ge 0,\ \ \ \ \forall k\in  \mathfrak{I}.$$
		\item[(II)]
		{ Let  $x$ be in $\mathfrak{V}$, suppose that the Lebesgue measure $\mathscr{L}(\mathcal S(x)) $ of $\mathcal S(x)$  is strictly positive, let $ E_x \in \mathbb{R}_+$ be a   constant and assume further that it is indeed the local    energy of the initial condition on $\mathcal{S}(x)$
		$$ \int_{\mathcal{S}(x)}f_0(k) \omega(k)\mathrm{d}k = E_x.$$ Suppose that  
		\begin{equation}\label{Thm:MainClassical:1}
			\begin{aligned}
			\frac{1}{a_x}	\int_{\mathcal{S}(x)}\mathrm{d}k\ = \ \frac{\mathscr{L}(\mathcal S(x))}{a_x} \ = & \ E_x,
			\end{aligned}
		\end{equation}}
		with $a_x\in\mathbb{R}_+$; the local equilibrium on the collision invariant region $\mathcal{S}(x)$ can be uniquely determined as
		\begin{equation}\label{Thm:MainClassical:3}
			\frac{1}{a_x\omega(k)}.
		\end{equation}
		Then, the following limits always holds true
		\begin{equation}\label{Thm:MainClassical:4}
			\lim_{t\to\infty}\left\|f(t,k)- \frac{1}{a_x\omega(k)}\right\|_{L^1(\mathcal{S}(x))}=0.
		\end{equation}
		and
		\begin{equation}\label{Thm:MainClassical:5}
			\lim_{t\to\infty}\left|\int_{\mathcal{S}(x)}\ln[f]\mathrm{d}k -\int_{\mathcal{S}(x)}\ln\left[\frac{1}{a_x\omega(k)}\right]\mathrm{d}k\right|=0.
		\end{equation}
		If, in addition, there is a positive constant $M^*>0$ such that $f(t,k)<M^*$ for all $t\in[0,\infty)$ and for all $k\in\mathcal{S}(x)$, then 
		\begin{equation}\label{Thm:MainClassical:6}
			\lim_{t\to\infty}\left\|f(t,\cdot)-\frac{1}{a_x\omega(k)}\right\|_{L^p(\mathcal{S}(x))}=0,
			\ \ \ \ \forall p\in[1,\infty).
		\end{equation}
		If we assume further that $f_0(k)>0$ for all $k\in\mathcal{S}(x)$, there exists a constant $M_*$ such that $f(t,k)>M_*$ for all $t\in[0,\infty)$ and for all $k\in\mathcal{S}(x)$.
	\end{itemize}
\end{theorem}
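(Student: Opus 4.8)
The plan is to treat the two pieces of the decomposition $\mathbb T^3=\mathfrak I\cup\bigcup_{x}\mathcal S(x)$ by entirely different mechanisms: on the no-collision region the collision operator is identically zero, while on each collisional-invariant region the convergence is driven by a relative-entropy functional adapted to the local conservation laws. For Part (I), I would argue that for $k\in\mathfrak I$ the whole right-hand side of \eqref{PhononEqC} vanishes: by the definition of the no-collision region no such $k$ is joined to any other wavevector through the three resonant processes preceding Condition (E), so for every $(k_1,k_2)$ at least one of the constraints carried by the two delta functions in either block of \eqref{PhononEqC} fails, and every integrand is zero. Hence $Q_c[f](k)=0$, $\partial_t f(k,t)=0$, and $f(t,k)=f_0(k)$. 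On a region $\mathcal S(x)$ the opposite holds: it is stable under the resonant dynamics, so any collision meeting $\mathcal S(x)$ has all three legs in $\mathcal S(x)$; testing \eqref{PhononEqC} against $k$ and against $\omega(k)$ and using the microscopic balances $k=k_1+k_2$, $\omega=\omega_1+\omega_2$ together with the symmetry of $K$, the moments $\int_{\mathcal S(x)}k f\,dk$ and $\int_{\mathcal S(x)}\omega f\,dk$ are constant, hence equal to $M_x$ and $E_x$ for all $t$.

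Next I set $g(k)=1/(a_x\omega(k)+b_x\cdot k)$, the equilibrium \eqref{Thm:MainClassical:3}, which on the bounded set $\mathcal S(x)$ is pinched between positive constants $0<g_{\min}\le g\le g_{\max}$ since $a_x\omega+b_x\cdot k>0$ there. The entropy $S[f]=\int_{\mathcal S(x)}\log f\,dk$ obeys the $H$-theorem: symmetrising the kernel turns $\frac{d}{dt}S[f]$ into
\[
\frac{d}{dt}S[f]=\tfrac12\int K\,\delta(k-k_1-k_2)\,\delta(\omega-\omega_1-\omega_2)\,f f_1 f_2\Big(\tfrac1f-\tfrac1{f_1}-\tfrac1{f_2}\Big)^2\ge0,
\]
non-negative and vanishing exactly when $\tfrac1{f(k)}=\tfrac1{f(k_1)}+\tfrac1{f(k_2)}$ on every resonance inside $\mathcal S(x)$. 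I then introduce the relative entropy
\[
\mathcal H(f\mid g):=\int_{\mathcal S(x)}\Big(\tfrac{f}{g}-1-\log\tfrac{f}{g}\Big)\,dk\ge0,
\]
and, using $1/g=a_x\omega+b_x\cdot k$ and the conservation of energy and momentum, cancel its linear part to reach the identity $\mathcal H(f(t)\mid g)=S[g]-S[f(t)]$. Differentiating and invoking conservation once more gives $\frac{d}{dt}\mathcal H(f(t)\mid g)=-\frac{d}{dt}S[f(t)]\le0$, so $\mathcal H(f(t)\mid g)$ decreases to a limit $\mathcal H_\infty\ge0$; and since $S[f(t)]$ is nondecreasing and bounded above by $S[g]$, the entropy production is integrable in time and $S_\infty:=\lim_{t\to\infty}S[f(t)]$ exists.

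The crux is to show $\mathcal H_\infty=0$, equivalently $S_\infty=S[g]$. Because the production is time-integrable I can choose $t_n\to\infty$ along which it tends to $0$; the $L^1$ energy bound $\int_{\mathcal S(x)}f\,dk\le E_x/\omega_0$ and the monotone entropy provide enough uniform integrability to extract a limit $f(t_n,\cdot)\to f_\infty$. I expect the delicate point to be precisely the \emph{strength} of this compactness: the production functional is genuinely nonlinear in $f$, so it is not lower semicontinuous under mere weak $L^1$ convergence, and upgrading to a topology in which the production passes to the limit — via the assumed $C^1$-regularity and equicontinuity through Arzel\`a--Ascoli — is the technical heart of the proof. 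Once $f_\infty$ is reached, vanishing of its production forces $1/f_\infty$ to be a collisional invariant on $\mathcal S(x)$; here I invoke the \emph{local ergodicity} of $\mathcal S(x)$ (any two of its wavevectors are joined by finitely many resonances) to conclude that the only such invariants are spanned by $\omega$ and $k$, so $1/f_\infty=a'\omega+b'\cdot k$. Passing the conserved moments to the limit fixes the local energy and momentum of $f_\infty$ at $E_x,M_x$, and the assumed unique solvability of the system \eqref{Thm:MainClassical:1} then forces $(a',b')=(a_x,b_x)$ and $f_\infty=g$. Continuity of $S$ along the subsequence gives $S_\infty=S[g]$, i.e. $\mathcal H_\infty=0$.

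With $\mathcal H(f(t)\mid g)\to0$ the remaining conclusions are structural. Since $g$ is squeezed between positive constants, a Csisz\'ar--Kullback--Pinsker-type convexity inequality on $\mathcal S(x)$ bounds $\|f-g\|_{L^1(\mathcal S(x))}$ by a quantity that vanishes with $\mathcal H$, yielding \eqref{Thm:MainClassical:4}; the identity $\mathcal H(f(t)\mid g)=S[g]-S[f(t)]$ gives \eqref{Thm:MainClassical:5} directly; and under the uniform bound $f<M^*$ the interpolation $\|f-g\|_{L^p}^p\le\|f-g\|_{L^\infty}^{p-1}\|f-g\|_{L^1}$ promotes $L^1$ to $L^p$ for every finite $p$, giving \eqref{Thm:MainClassical:6}. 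For the final lower bound I would combine the three inputs now in hand — the upper bound $f<M^*$, the convergence $f\to g$ in $L^1$, and $g\ge g_{\min}$ — to show that the positive gain contribution $\int K\,\delta(k-k_1-k_2)\,\delta(\omega-\omega_1-\omega_2)f_1f_2\,dk_1dk_2$ in \eqref{PhononEqC} is bounded below by a fixed $\gamma_0>0$ for all large $t$, its limit $\int K\,\delta\,\delta\,g_1g_2$ being strictly positive by local ergodicity and the lower bound on $g$. Writing the loss part of \eqref{PhononEqC} as $-f(k)L(k)$ with $|L(k)|\le CM^*$, one obtains $\partial_t f\ge \gamma_0/2-CM^* f$ for large $t$, whose barrier/Gronwall analysis traps $f$ above $y^*=\gamma_0/(2CM^*)$; on each finite time interval $f$ is continuous and strictly positive on the compact torus and hence bounded below, and combining the two regimes produces a single $M_*>0$.
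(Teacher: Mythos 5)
Your skeleton is the same as the paper's: Part (I) via vanishing of $Q_c$ on $\mathfrak{I}$, conservation of local energy/momentum by testing with $k$ and $\omega(k)$, the H-theorem, the identity $S[g]-S[f]=\mathcal{H}(f\mid g)$ together with a Csisz\'ar--Kullback--Pinsker-type bound (this is exactly Lemma \ref{Propo:Entropy2C}), a sequence of times along which the entropy production vanishes, identification of the limit via the collisional-invariant classification plus the admissibility hypothesis, and the Egorov/interpolation upgrade to $L^p$. The genuine gap sits precisely where you place ``the technical heart'': your compactness step does not work as stated. Arzel\`a--Ascoli needs a bound on $\|f(t_n,\cdot)\|_{C^1(\mathcal{S}(x))}$ uniform in $n$, but the hypothesis $f\in C([0,\infty),C^1(\mathbb{T}^3))\cap C^1((0,\infty),C^1(\mathbb{T}^3))$ gives only continuity in time, not boundedness of the $C^1$ norm as $t\to\infty$. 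Likewise $\int f\,\mathrm{d}k\le E_x/\omega_0$ plus bounded $\int\ln f\,\mathrm{d}k$ does not give equi-integrability ($\ln$ grows too slowly to prevent concentration), so even weak $L^1$ compactness of $f(t_n)$ is not secured; and even granting a weak limit, both the entropy production and the gain/loss terms involve $f$ restricted to the resonant manifold, a null set, so they do not pass to weak (or even strong) $L^1$ limits. The paper's proof of Proposition \ref{Propo:ConvergenceEqCutoff} is built to avoid exactly this: it introduces the cut-offs $\chi_N[f]=\varrho_N[f]\varrho_N[|\nabla f|]$ of Section \ref{Sec:Cutoff}, which supply the missing uniform $C^1$-type control inside the cut-off region, proves Lipschitz continuity of the index functionals by stationary phase (Proposition \ref{Propo:Lip}), and then applies the Kolmogorov--Riesz theorem to the smoothed collision integrals $\mathcal{G}^N_1[g^n]$, $\mathcal{G}^N_2[g^n]$, $\tilde{\mathbb{Q}}^{N,-}_c[g^n]$ to obtain strong $L^p$ limits; the limiting collisional invariant is then extracted from weak limits of the cut-off quantities, and the cut-off parameter $N$ is removed only at the very end using the admissibility of $(E_x,M_x)$. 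Without this (or an equivalent) mechanism, your argument stops at the extraction of $f_\infty$.

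A second, separable gap is the final lower bound. You propose to bound the gain term $\int K\,\delta(k-k_1-k_2)\,\delta(\omega-\omega_1-\omega_2)f_1f_2\,\mathrm{d}k_1\mathrm{d}k_2$ from below for large $t$ by passing the $L^1$ convergence $f\to g$ inside this integral; but $L^1(\mathcal{S}(x))$ convergence gives no control of $f$ on the resonance manifold, which has Lebesgue measure zero, so the claimed convergence of the gain term at fixed $k$ is unjustified. The paper's Proposition \ref{Propo:LowerBound} goes the other way: it propagates the strictly positive initial bound $f_0\ge f_0^*>0$ forward in time by a Duhamel/barrier argument (from $\partial_t f\ge K[f]-\mathcal{C}(t)f$ one first gets $f\gtrsim e^{-\mathcal{C}t}$, then bootstraps $K[f]\gtrsim e^{-2\mathcal{C}t}$ using the uniform bounds on the resonant-manifold measures from Proposition \ref{Propo:Measure}), and only uses the assumed upper bound $f<M^*$ to keep $\mathcal{C}(t)$ bounded, hence the lower bound uniform in time. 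You would need to replace your limiting argument by something of this kind. Note also that your appeal to ``local ergodicity'' to classify collisional invariants is itself the content of Lemma \ref{Lemma:CollsionInvariant} and Proposition \ref{Propo:CollsionInvariant}, which require their own nontrivial proofs.
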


\begin{remark}
	In the above theorem, we assume the well-posedness of the equation. As this piece of analysis is quite subtle and long, we reserve it for a separate paper.
\end{remark}
\begin{remark}
	Notice that, according to our result, the torus $\mathbb{T}^3$ can be decomposed into disjoint subsets as follows
	\begin{equation}
		\label{Thm:MainClassical:1}
		\mathbb{T}^3 \ = \ \mathfrak{I}\cup \bigcup_{x\in \mathfrak{V}}\mathcal{S}(x),
	\end{equation}
	where  $\mathcal{S}(x)\cap\mathcal{S}(y)=\emptyset$ and  $\mathcal{S}(x)\cap \mathfrak{I}= \emptyset$ for $x,y\in \mathfrak{V}$.  However, those  disjoint subsets might
	 be topologically disconnected sets. 
\end{remark}
{
\begin{remark}
Since our solutions are assumed to be are regular and non-measured,	in (II) of the above theorem, the condition that $\mathscr{L}(\mathcal S(x)) >0$ is essential. When   $\mathscr{L}(\mathcal S(x)) =0$, it follows that 	$\frac{1}{a_x}\int_{\mathcal{S}(x)}\mathrm{d}k\ = \ \int_{\mathcal{S}(x)}f_0(k) \omega(k)\mathrm{d}k  \ =   \ 0,$ and those cases are negligible due to the assumption on our solutions. The case when $\mathscr{L}(\mathcal S(x)) =0$ is more interesting when the solutions are  measures and this problem is being investigated and will be reported in an upcoming work. 
\end{remark}}

The above two theorems assert that those subregions are all non-empty. In the no-collision region $\mathfrak{I}$,  any wavevector $k\in \mathfrak{I}$ is totally disconnected to other wavevectors, and thus the solutions on $\mathfrak{I}$ do not change as time evolves. In each of the collisional invariant regions $\mathcal{S}(x)$, as time goes to infinity, the solutions  converge in the $L^1(\mathcal{S}(x))$-norm  to $ \frac{1}{a_x\omega(k)  }$. In the classical case, to obtain the convergence, we need more regularity on the solutions: we assume that the solutions are in  $C([0,\infty), C^1(\mathbb{T}^3))\cap C^1((0,\infty), $ $C^1(\mathbb{T}^3))$.

Let us also mention that this asymptotic behavior of the solutions to this 3-wave equations is very different from what is observed in spatially homogeneous and isotropic capillary or acoustic kinetic wave equations. It is showed in \cite{soffer2019energy} that if one looks for a solution whose energy is a constant for all time to one of these isotropic capillary/acoustic kinetic wave equations, then this solution can exist only up to a finite time, after this time, some energy is lost to infinity. In other words, the solution exhibits the so-called energy cascade phenomenon.

\section{The  analysis of the 3-wave kinetic  equation}\label{Sec:Classical}
In our proof, as discussed above, we suppose  $K(\omega,\omega_1,\omega_2)$ is $[\omega(k)\omega_1(k)\omega_1(k)]^{-1}$ for the sake of simplicity.  We assume, when needed that the Lebesgue measure of each collisional region  $\mathscr{L}(\mathcal S(x)) $ is strictly positive due to Remark 6.
\subsection{No-collision, collisional regions and the 3-wave kinetic operator on these local disjoint sets}

{In this section $x, y, z$ are now used for frequency vectors, in contrast
to the previous discussion in equation (\eqref{QuadraticNLS}) and the previous sections. Therefore,  that the notations will now
be unlinked from what they were in prior sections.}
\subsubsection{Collisional invariant regions}\label{CollisionInvariantRegions}
For a vector $x=(x^1,x^2,x^3)\in\mathbb{T}^3$, we say that the wave vector $x$ is connected to the wave vector $y=(y^1,y^2,y^3)\in\mathbb{T}^3$ by a {\bf forward collision} if and only if
\begin{equation}\label{ForwardCollision}
\mathfrak{F}^f_x(y) \ := \ \sum_{j=1}^32[\cos(2\pi (y_j-x_j)) \  + \ \cos(2\pi x_j) \ - \ \cos(2\pi y_j)] \ - \ 6 -\omega_0 \ = \ 0.
\end{equation}
In a forward collision, a particle with wave vector $y-x$ merges with a particle with wave vector $x$, resulting in a new particle with wave vector $y$. Following Remark \ref{Remarkdelta}, we could see that $y-x$ does not need to belong to $\mathbb{T}^d$. Indeed, there exists a vector $z\in\mathbb{Z}^d$ such that $y-x-z\in\mathbb{T}^d$. In this collision, the conservation of energy $\omega(y)=\omega(x)+\omega(y-x)$, describing by equation  \eqref{ForwardCollision}, needs to be satisfied. Therefore, given a particle with wave vector $x$, there maybe no wave vector $y$ such that the conservation of energy is guaranteed. In other words, there may be no $y$ such that $x$ is connected  to $y$ by a forward collision.

On the other hand, we say that the wave vector $x$ is connected to the wave vector $y=(y^1,y^2,y^3)\in\mathbb{T}^3$  by a {\bf backward collision}  if and only if
\begin{equation}\label{BackwardCollision}
\mathfrak{F}^b_x(y) \ := \ \sum_{j=1}^32[\cos(2\pi y_j) \  + \ \cos(2\pi(x_j-y_j)) \ - \ \cos(2\pi x_j)] \ - \ 6 -\omega_0 \ = \ 0.
\end{equation}
Different from  forward collisions, in a backward collision, a particle with wave vector $x$ is broken into two particles, one with wave vector $y$, and the other one with wave vector $x-y$. Again, in a backward collision, the conservation of energy $\omega(x)=\omega(y)+\omega(x-y)$ needs to be satisfied; and therefore, for a given wave vector $x$, it could happen that one cannot break $x$ into $y$ and $x-y$, such that the energy conservation \eqref{BackwardCollision} is satisfied. Again, following Remark \ref{Remarkdelta}, we could see that $x-y$ does not need to belong to $\mathbb{T}^d$. Indeed, there exists a vector $z\in\mathbb{Z}^d$ such that $x-y-z\in\mathbb{T}^d$.

Finally, we say that the wave vector $x$ is connected to the wave vector $y$ or the wave vector $y$ is connected to the wave vector $x$ by a {\bf central collision}  if and only if
\begin{equation}\label{CentralCollision}
\mathfrak{F}^c_x(y)\  = \ \mathfrak{F}^c_y(x) \ := \ \sum_{j=1}^32[\cos(2\pi y_j) \  + \ \cos(2\pi(x_j)) \ - \ \cos(2\pi (x_j+y_j))] \ - \ 6 -\omega_0 \ = \ 0.
\end{equation}
Similarly to the above types of collisions, in a central collision, we require that $\omega(x)+\omega(y)=\omega(x+y)$ and this conservation of energy is not always satisfied. Following Remark \ref{Remarkdelta}, we could see that $y+x$ does not need to belong to $\mathbb{T}^d$. Indeed, there exists a vector $z\in\mathbb{Z}^d$ such that $y+x-z\in\mathbb{T}^d$.

Note that if $y$ is connected to $x$ by a forward collision, then $x$ is connected to $y$ by a backward collision. Moreover, if $y$ is connected to $x$ by a central collision, then $x$ is connected to $y$ by a central collision and $x+y$ is connected to both $x$ and $y$ by backward collisions. We simply say that $x$ and $y$ are connected by one collision; or $x$ is connected to  $y$ and $y$ is connected to $x$ by one collision.

If a wave vector $k$ is not connected to any other wave vectors in  forward collisions, the second term in the collision operator $Q_c[f](k)$
$$\int_{\mathbb{T}^6}[\omega\omega_1\omega_2]^{-1}\delta(k_1-k-k_2)\delta(\omega_1-\omega-\omega_2)[f_2f-ff_1-f_1f_2]\mathrm{d}k_1\mathrm{d}k_2$$
vanishes, no matter how we choose the function $f$.

If a wave vector $k$ is not connected to any other wave vectors in backward collisions, the first term in the collision operator $Q_c[f](k)$
$$\int_{\mathbb{T}^6}[\omega\omega_1\omega_2]^{-1}\delta(k-k_1-k_2)\delta(\omega-\omega_1-\omega_2)[f_1f_2-ff_1-ff_2]\mathrm{d}k_1\mathrm{d}k_2$$
vanishes.

We define the set of all wave vectors $k$ such that $k$ is not connected to any other wave vectors to be {\bf the no-collision region $\mathfrak{I}$.} It is clear that $\mathfrak{F}^f_0(y)=\mathfrak{F}^c_0(y)=-\omega_0<0$ and $$\mathfrak{F}^b_0(y)=\sum_{j=1}^32[2\cos(2\pi y_j)-1] -6-\omega_0=\sum_{j=1}^32[2\cos(2\pi y_j)-2] -\omega_0\le \ -\omega_0<0,$$ for all wave vectors $y$. As a consequence, the origin belongs to $\mathfrak{I}$. Since $\mathfrak{F}^f_0(y), \mathfrak{F}_0^b(y), \mathfrak{F}^c_0(y)\le -\omega_0 <0$, there exists a ball $B(0,R):=\{x\in\mathbb{R}^3 \ \ |\ \  |x| <R\}$, $(R>0)$, such that $\mathfrak{F}^f_x(y), \mathfrak{F}_x^b(y), \mathfrak{F}^c_x(y)<0$, for all  $y\in\mathbb{T}^3$ and for all $x\in B(0,R)$. The ball $B(0,R)$ is therefore a subset of  the  no-collision  region $\mathfrak{I}$.

The condition $2<\omega_0< 3$ implies that the set $\mathbb{T}^3\backslash\mathfrak{I}$ is then not empty. 
For a vector $x\in \mathbb{T}^3\backslash\mathfrak{I}$, we define $\mathcal{S}^1(x)$ to be  the {\bf one-collision connection set of $x$}, containing  all wave vectors $y\in\mathbb{T}^3$ such that $y$ is connected to $x$ by a  collision.  By a recursive manner, we also define $\mathcal{S}^n(x)=\mathcal{S}^1(\mathcal{S}^{n-1}(x))$, the {\bf $n$-collision connection set of $x$}, for $n\ge 2, n\in\mathbb{N}$. This set  consists of all wave vectors connecting to $x$ by at most $n$ collisions. The union 
\begin{equation}\label{ConnectionSet}
\mathcal{S}(x)=\bigcup_{1\le n<\infty}\mathcal{S}^n(x)
\end{equation}
contains all wave vectors $y$  connecting to $x$ by a finite number of collisions.  We then call $\mathcal{S}(x)$ a {\bf finite collision connection set of $x$} or a {\bf collision invariant region}. 

Note that if $k\in\mathcal{S}(x)$ and $k$ is connected to $k+k'\in\mathcal{S}(x)$ by a forward collision,  then $k+k'$ is also connected with $k'$ by a backward collision, and hence $k'\in\mathcal{S}(x)$. 
\begin{proposition}[The effect of the collision operator on the no-collision region]\label{NoCollisionEffect}
Any smooth solution $f(t,k)$ of \eqref{PhononEqC}, is time invariant on the no-collision region $\mathfrak{I}$. In other words, $f(t,k)=f_0(k)$ for all $k\in\mathfrak{I}$.
\end{proposition}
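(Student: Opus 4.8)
The plan is to show that the right-hand side $Q_c[f](t,k)$ vanishes pointwise for every $k\in\mathfrak{I}$ and every $t\ge 0$, irrespective of the values taken by $f$; once this is established, the equation \eqref{PhononEqC} reduces to $\partial_t f(t,k)=0$ on $\mathfrak{I}$, and integrating in time yields $f(t,k)=f_0(k)$.

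Fix $k\in\mathfrak{I}$. I would treat the two integrals defining $Q_c[f](k)$ separately, using the momentum delta in each to collapse one integration variable. In the first integral, $\delta(k-k_1-k_2)$ forces $k_2=k-k_1$, reducing it to an integral over $k_1\in\mathbb{T}^3$ whose surviving factor is the energy delta $\delta(\omega(k)-\omega(k_1)-\omega(k-k_1))$. A direct computation from the dispersion relation \eqref{DispersionRelation} yields the identity $\mathfrak{F}^b_k(k_1)=\omega(k)-\omega(k_1)-\omega(k-k_1)$, so this factor is exactly $\delta(\mathfrak{F}^b_k(k_1))$, supported on the backward-collision partners of $k$. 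Likewise, in the second integral $\delta(k_1-k-k_2)$ forces $k_1=k+k_2$, leaving $\delta(\omega(k+k_2)-\omega(k)-\omega(k_2))=\delta(\mathfrak{F}^c_k(k_2))$ after the analogous identity $\mathfrak{F}^c_k(k_2)=\omega(k+k_2)-\omega(k)-\omega(k_2)$; its support is the set of central- (equivalently forward-) collision partners of $k$.

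The crux is that membership $k\in\mathfrak{I}$ means, by the very definition of the no-collision region, that $k$ is not connected to any wave vector by any collision. In particular $\mathfrak{F}^b_k(y)\ne 0$ and $\mathfrak{F}^c_k(y)\ne 0$ for every $y\in\mathbb{T}^3$. Since $\mathfrak{F}^b_k$ and $\mathfrak{F}^c_k$ are continuous on the compact torus $\mathbb{T}^3$ and never vanish, each is bounded away from zero; consequently the distributions $\delta(\mathfrak{F}^b_k(\cdot))$ and $\delta(\mathfrak{F}^c_k(\cdot))$ are identically zero, and both integrals vanish regardless of $f$. This is precisely the content of the two observations recorded just before the statement, namely that the absence of backward collisions annihilates the first integral and the absence of forward collisions annihilates the second.

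The main (and essentially only) obstacle is to argue the vanishing rigorously rather than merely almost everywhere: because the energy constraint concentrates the Dirac mass on a hypersurface, one must rule out that $\mathfrak{F}^b_k$ or $\mathfrak{F}^c_k$ touches zero \emph{anywhere} on $\mathbb{T}^3$, not just on a null set. This is where compactness enters—continuity together with the nonvanishing guaranteed by $k\in\mathfrak{I}$ forces a uniform lower bound $|\mathfrak{F}^b_k|,|\mathfrak{F}^c_k|\ge\varepsilon>0$, so that each composed delta is the zero distribution and the collision manifolds are genuinely empty. With $Q_c[f](t,k)=0$ in hand for all $t$, the identity $\partial_t f(t,k)=0$ integrates to the claimed conclusion $f(t,k)=f_0(k)$ on $\mathfrak{I}$.
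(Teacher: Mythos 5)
Your proof is correct and follows essentially the same route as the paper: the paper's own proof is a one-line appeal to the two observations preceding the statement (no backward collisions kills the first integral, no forward/central collisions kills the second), followed by $\partial_t f=0$ and integration in time. You merely supply the rigorous detail the paper leaves implicit—collapsing the momentum deltas, identifying the energy deltas with $\delta(\mathfrak{F}^b_k(\cdot))$ and $\delta(\mathfrak{F}^c_k(\cdot))$, and using continuity plus compactness of $\mathbb{T}^3$ to bound these constraint functions away from zero so the composed deltas vanish identically—which is consistent with the regularization argument the paper itself uses later in the proof of Proposition \ref{Propo:Measure}.
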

\begin{proof}
Since  $k\in\mathfrak{I}$, the wave vector $k$ is not connected to any other wave vectors in any collisions, the collision operator $Q_c[f](k)$ vanishes, which implies $\partial_t f(t,k)=0$ for all $k\in\mathfrak{I}$. Therefore,  $f(t,k)=f_0(k)$ for all $k\in\mathfrak{I}$.
\end{proof}

\begin{proposition}[Decomposition into collisional invariant regions]
\label{Propo:ConnectionSet}
Let $x,y$ be two wave vectors in $\mathbb{T}^3\backslash\mathfrak{I}$, then either $\mathcal{S}(x)=\mathcal{S}(y)$ or $\mathcal{S}(x)\cap\mathcal{S}(y)=\emptyset$. In other words, either $x$ and $y$ are connected by a finite number of collisions ($\exists m>0$ such that $x\in \mathcal{S}^m(y)$) or they are totally disconnected ($\nexists m>0$ such that $x\in \mathcal{S}^m(y)$). 

As a consequence, there exists a subset $\mathfrak{V}$ of $\mathbb{T}^3\backslash\mathfrak{I}$ such that the torus $\mathbb{T}^3$ can be decomposed into disjoint  { collisional invariant regions}, as follows
\begin{equation}
\label{TorusDecomposition}
\mathbb{T}^3 \backslash\mathfrak{I} \ = \ \bigcup_{x\in \mathfrak{V}}\mathcal{S}(x),
\end{equation}
and $\mathcal{S}(x)\cap\mathcal{S}(y)=\emptyset$ for $x,y\in \mathfrak{V}$.
\end{proposition}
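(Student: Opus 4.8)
The plan is to recognize the family $\{\mathcal{S}(x)\}_{x}$ as the set of equivalence classes of a suitable equivalence relation on $\mathbb{T}^3\backslash\mathfrak{I}$, after which the dichotomy and the disjoint decomposition are immediate from the standard theory of partitions. I would define a relation $\sim$ by declaring $x\sim y$ if and only if $y\in\mathcal{S}(x)$, i.e. $y$ is connected to $x$ by a finite number of collisions. The one structural fact I would first isolate is that the one-collision connection is \emph{symmetric}: as observed just before the statement, $y$ connected to $x$ by a forward collision means $x$ is connected to $y$ by a backward collision (and conversely), while central collisions are symmetric by construction since $\mathfrak{F}^c_x(y)=\mathfrak{F}^c_y(x)$. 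Hence $y\in\mathcal{S}^1(x)$ if and only if $x\in\mathcal{S}^1(y)$.

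I would then check that $\sim$ is an equivalence relation. For reflexivity, $x\notin\mathfrak{I}$ means, by the very definition of the no-collision region, that there is at least one wave vector $z$ connected to $x$ by a single collision, so $z\in\mathcal{S}^1(x)$; symmetry of the one-collision connection gives $x\in\mathcal{S}^1(z)$, hence $x\in\mathcal{S}^1(\mathcal{S}^1(x))=\mathcal{S}^2(x)\subseteq\mathcal{S}(x)$, i.e. $x\sim x$. For symmetry, if $y\in\mathcal{S}^n(x)$ there is a chain $x=z_0,z_1,\dots,z_n=y$ with each consecutive pair connected by one collision; reversing the chain and applying one-collision symmetry at each step yields a chain of the same length from $y$ to $x$, so $x\in\mathcal{S}^n(y)$ and $y\sim x$. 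For transitivity, if $y\in\mathcal{S}^m(x)$ and $z\in\mathcal{S}^n(y)$, concatenating the two chains exhibits $z$ as connected to $x$ by at most $m+n$ collisions, so $z\in\mathcal{S}^{m+n}(x)\subseteq\mathcal{S}(x)$ and $x\sim z$.

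Once $\sim$ is known to be an equivalence relation, $\mathcal{S}(x)$ is precisely the equivalence class of $x$, and two classes of any equivalence relation are either equal or disjoint; this is exactly the claim that $\mathcal{S}(x)=\mathcal{S}(y)$ or $\mathcal{S}(x)\cap\mathcal{S}(y)=\emptyset$. To produce the decomposition \eqref{TorusDecomposition}, I would take $\mathfrak{V}\subseteq\mathbb{T}^3\backslash\mathfrak{I}$ to be a complete set of representatives, one point chosen from each equivalence class (invoking the axiom of choice if necessary). Because the equivalence classes cover their ground set and are pairwise disjoint, $\mathbb{T}^3\backslash\mathfrak{I}=\bigcup_{x\in\mathfrak{V}}\mathcal{S}(x)$ with $\mathcal{S}(x)\cap\mathcal{S}(y)=\emptyset$ for distinct $x,y\in\mathfrak{V}$, as required.

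The partition argument is routine; the genuine content sits in the structural symmetry of the three collision types, which the discussion preceding the statement hands to us, and in the care needed to make the recursion $\mathcal{S}^n(x)=\mathcal{S}^1(\mathcal{S}^{n-1}(x))$ (with $\mathcal{S}^1$ extended to sets by $\mathcal{S}^1(A)=\bigcup_{a\in A}\mathcal{S}^1(a)$) and the phrase ``connected by at most $n$ collisions'' completely precise. I expect the only delicate point to be reflexivity: one must genuinely use $x\notin\mathfrak{I}$ to guarantee a collision partner, so that the symmetric out-and-back two-step path places $x$ inside $\mathcal{S}^2(x)$; without a collision partner the argument would fail, which is consistent with the no-collision region being excluded.
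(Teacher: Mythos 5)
Your proof is correct, and it reaches the conclusion by a somewhat cleaner route than the paper's. The paper proves the dichotomy directly by contradiction: it picks $z\in\mathcal{S}(x)\cap\mathcal{S}(y)$, shows $\mathcal{S}^p(z)\subset\mathcal{S}^{n+p}(x)$ for all $p$ (hence $\mathcal{S}(z)\subset\mathcal{S}(x)$, and likewise $\mathcal{S}(z)\subset\mathcal{S}(y)$), and then shows $\mathcal{S}(y)\subset\mathcal{S}(z)$ by supposing some $\vartheta\in\mathcal{S}(y)\backslash\mathcal{S}(z)$ exists and concatenating chains through $y$ to connect $\vartheta$ to $z$ -- that is, it uses exactly the two ingredients you isolate: concatenation of finite collision chains, and symmetry of the one-collision relation (used implicitly when the paper reverses the chain joining $z$ to $y$). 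Your equivalence-relation packaging buys two things. First, the dichotomy and the decomposition \eqref{TorusDecomposition} come for free from the standard fact that equivalence classes partition the ground set, instead of a bespoke double-inclusion/contradiction argument. Second, and more substantively, you prove reflexivity ($x\in\mathcal{S}^2(x)\subseteq\mathcal{S}(x)$ via the out-and-back step, using precisely $x\notin\mathfrak{I}$): this is exactly what is needed for the covering $\mathbb{T}^3\backslash\mathfrak{I}=\bigcup_{x\in\mathfrak{V}}\mathcal{S}(x)$, a point the paper waves off as following ``straightforwardly'' without ever establishing $x\in\mathcal{S}(x)$. One small detail you should state explicitly to complete the identification of $\mathcal{S}(x)$ with the equivalence class of $x$ inside the ground set $\mathbb{T}^3\backslash\mathfrak{I}$: namely $\mathcal{S}(x)\cap\mathfrak{I}=\emptyset$, which is immediate because every $w\in\mathcal{S}(x)$ is connected by one collision to the preceding element of its chain and hence, by definition of the no-collision region, cannot lie in $\mathfrak{I}$.
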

\begin{proof}
Let $x,y$ be two wave vectors in $\mathbb{T}^3\backslash\mathfrak{I}$ and suppose that $\mathcal{S}(x)\cap\mathcal{S}(y)\neq\emptyset$, we can therefore choose a wave vector $z$ belonging to both sets $\mathcal{S}(x)$ and  $\mathcal{S}(y)$, that means $z$ is connected to both wave vectors $x$ and $y$ by finite numbers of collisions. It follows that $z\in \mathcal{S}^n(x)$ and $z\in \mathcal{S}^m(y)$, for some positive integers $n$ and $m$. Since $z\in  \mathcal{S}^n(x)$, it is clear that $\mathcal{S}(z)\subset\mathcal{S}^{n+1}(x)$, and in general $\mathcal{S}^p(z)\subset\mathcal{S}^{n+p}(x)$ for all $p\in\mathbb{N}$. As a result, $\mathcal{S}(z)\subset\mathcal{S}(x)$. By a similar argument, it also follows that $\mathcal{S}(z)\subset\mathcal{S}(y)$. Now, let $\vartheta$ be an wave vector of $\mathcal{S}(y)\backslash \mathcal{S}(z)$. Being a wave vector of $\mathcal{S}(y)$, $\vartheta$ is connected to $y$ by a finite number $p\in\mathbb{N}$ of collisions. Since $z$ is connected to $y$ by $m$ collisions,  $\vartheta$ is connected to $z$ by at most $p+m$ collisions. In other words, $\vartheta\in \mathcal{S}^{p+m}(z)$; and hence, $\vartheta\in \mathcal{S}(z)$, contradicting the fact that $\vartheta \in \mathcal{S}(y)\backslash \mathcal{S}(z)$. This contradiction leads to $\mathcal{S}(y)\subset \mathcal{S}(z)$; however, as shown above $\mathcal{S}(z)\subset\mathcal{S}(y)$, it then follows  $\mathcal{S}(y)= \mathcal{S}(z)$. The same argument can also be  used to prove  $\mathcal{S}(x)= \mathcal{S}(z)$. We finally get $\mathcal{S}(y)= \mathcal{S}(x)$. 

The existence of $ \mathfrak{V}$ and the decomposition \eqref{TorusDecomposition} then follows straightforwardly. 
\end{proof}
\begin{remark}
The decomposition of the domain $\mathbb{T}^3$ in to several collisional invariant and no-collision regions is a very special and interesting feature of the specific form of the dispersion relation \eqref{DispersionRelation}. 

In the previous works, several other dispersion relations have been considered in many other contexts $\omega(k)=|k|$ for very low temperature bosons (see \cite{AlonsoGambaBinh,EscobedoBinh}), $\omega(k)=|k|^\gamma$, $(1<\gamma\le 2)$ for capillary waves (see \cite{nguyen2017quantum}),  $\omega(k)=\sqrt{c_1|k|^2+c_2|k|^4}$, $(0<c_1,0\le c_2)$ for bosons (see \cite{PomeauBinh,Binh1}) and the space of the frequency $k$ is $\mathbb{R}^d$. 
{In all of these cases, the division of the domain of wavenumbers into disjoint regions has never been observed due to the fact that the frequency space is $\mathbb{R}^d$ instead of $\mathbb{T}^d$. On the other hand, important results on 4-wave kinetic equations set the 
torus $\mathbb{T}^d$ have been recently obtained in \cite{escobedo2024entropy,germain2024stability,  lukkarinen2008anomalous}.}

Notice that in \cite{GambaSmithBinh}, the dispersion relation $\omega(k)=\sqrt{c_1+c_2|k|^2}$, $(0<c_1,c_2)$ for stratified flows in the ocean, has been considered. However, the resonance is broadened and the extended resonance manifold is then studied
$$k=k_1+k_2, \ \ \ |\omega(k) - \omega(k_1) - \omega(k_2)|\le \theta,  \ \ \ k, k_1, k_2\in\mathbb{R}^2,$$
for $\theta>0$, in stead of the exact resonance one
$$k=k_1+k_2, \ \ \ \omega(k) = \omega(k_1) + \omega(k_2),  \ \ \ k, k_1, k_2\in\mathbb{R}^3,$$
{due to the fact that the exact resonance configuration is no longer correct} (see \cite{smith2002generation}). Of course, in all resonance broadening cases, the decomposition of the full domain into local no-collision and collisional invariant regions does not exist. 

\end{remark}

\begin{proposition}\label{Propo:SxClosed}
The set $\mathcal{S}^n(x)$ is a closed subset of $\mathbb{T}^3$ for all $n\in\mathbb{N}\backslash\{0\}$.
\end{proposition}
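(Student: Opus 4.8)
The plan is to realize each of the three collision types as the zero set of a jointly continuous function on $\mathbb{T}^3\times\mathbb{T}^3$, package them into one closed \emph{collision relation}, and then carry closedness through the recursive construction by exploiting compactness of the torus. Concretely, I would first introduce
\[
R \ := \ \big\{(z,y)\in\mathbb{T}^3\times\mathbb{T}^3 \ : \ \mathfrak{F}^f_z(y)=0 \ \text{ or } \ \mathfrak{F}^b_z(y)=0 \ \text{ or } \ \mathfrak{F}^c_z(y)=0\big\}.
\]
Each of $\mathfrak{F}^f_z(y)$, $\mathfrak{F}^b_z(y)$, $\mathfrak{F}^c_z(y)$ from \eqref{ForwardCollision}--\eqref{CentralCollision} is a finite sum of terms of the form $\cos(2\pi z_j)$, $\cos(2\pi y_j)$, $\cos(2\pi(y_j-z_j))$, hence continuous in the pair $(z,y)$ jointly on the torus. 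Consequently each of the three zero sets is closed, being the preimage of $\{0\}$ under a continuous map, and $R$ is closed as a finite union of closed sets.

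The heart of the matter is an observation I would isolate as a lemma: if $A\subseteq\mathbb{T}^3$ is closed, then
\[
\mathcal{S}^1(A) \ = \ \bigcup_{z\in A}\mathcal{S}^1(z) \ = \ \big\{\,y\in\mathbb{T}^3 \ : \ \exists\, z\in A,\ (z,y)\in R\,\big\}
\]
is closed. Since $\mathbb{T}^3$ is compact, $A$ is compact and so is $A\times\mathbb{T}^3$; thus $R\cap(A\times\mathbb{T}^3)$ is a closed subset of a compact space, hence compact. Its image under the continuous second-coordinate projection $\pi_2(z,y)=y$ is exactly $\mathcal{S}^1(A)$, and the continuous image of a compact set is compact, hence closed in $\mathbb{T}^3$. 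Equivalently, compactness of the fiber makes $\pi_2:\mathbb{T}^3\times\mathbb{T}^3\to\mathbb{T}^3$ a closed map.

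With the lemma in hand I would finish by induction on $n$. For $n=1$ the singleton $\{x\}$ is closed, so $\mathcal{S}^1(x)=\mathcal{S}^1(\{x\})$ is closed by the lemma. For the inductive step, assuming $\mathcal{S}^{n-1}(x)$ is closed, the recursion $\mathcal{S}^n(x)=\mathcal{S}^1(\mathcal{S}^{n-1}(x))$ yields closedness by the same lemma; and if one reads $\mathcal{S}^n(x)$ as collecting all wave vectors reachable in \emph{at most} $n$ collisions, it differs only by the union with the already-closed $\mathcal{S}^{n-1}(x)$, which preserves closedness. I expect the only genuinely nontrivial point to be this use of compactness: the projection of a closed set need not be closed in general (it fails already over $\mathbb{R}^3$), so the whole argument leans on $\mathbb{T}^3$ being compact, which upgrades $\pi_2$ to a closed (proper) map. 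The remaining ingredients---joint continuity of the $\mathfrak{F}$'s and stability of closedness under finite unions---are routine.
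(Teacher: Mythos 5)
Your proof is correct and follows essentially the same route as the paper's: both rest on the joint continuity of $\mathfrak{F}^f,\mathfrak{F}^b,\mathfrak{F}^c$ (so that the one-collision relation is closed) and on the compactness of $\mathcal{S}^{n-1}(x)$ as a closed subset of $\mathbb{T}^3$, carried through an induction on $n$. The only difference is presentational: where you invoke the lemma that projection along a compact factor is a closed map, the paper performs the equivalent sequential argument, extracting a convergent subsequence of the connecting wave vectors $y_m\in\mathcal{S}^k(x)$ and passing to the limit using joint continuity.
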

\begin{proof}
We first observe that the set $\mathcal{S}^1(x)$ contains all wave vectors $y$ such that $x$ is connected to $y$ by either  a forward, a backward  or a central collision. By definition, the set of all $y$ such that $x$ is connected to $y$ by a forward collision is 
\begin{equation}\label{Propo:SxClosed:E1}
\mathcal{S}^1_f(x) \ = \ \left[\mathfrak{F}_x^f\right]^{-1}\left(\left\{0\right\}\right).
\end{equation}
Similarly, the sets of all $y$ such that $x$ is connected to $y$ by backward and central collisions are 
\begin{equation}\label{Propo:SxClosed:E2}
\mathcal{S}^1_b(x) \ = \ \left[\mathfrak{F}_x^b\right]^{-1}\left(\left\{0\right\}\right),
\end{equation}
and
\begin{equation}\label{Propo:SxClosed:E3}
\mathcal{S}^1_c(x) \ = \ \left[\mathfrak{F}_x^c\right]^{-1}\left(\left\{0\right\}\right).
\end{equation}
By the continuity of $\mathfrak{F}_x^f, \mathfrak{F}_x^b$ and $\mathfrak{F}_x^c$, the sets $\mathcal{S}^1_f(x)$, $\mathcal{S}^1_b(x)$ and $\mathcal{S}^1_c(x)$ are all closed. Since $\mathcal{S}^1(x)=\mathcal{S}^1_f(x)\cup \mathcal{S}^1_b(x)\cup \mathcal{S}^1_c(x)$, it is also a closed set. 

We now follow an induction argument in $n$. When $n=1$, it is clear from the above argument that $\mathcal{S}^1(x)$ is closed. Suppose that $\mathcal{S}^k(x)$ is closed, we will show that $\mathcal{S}^{k+1}(x)$ is also closed for all $k\ge 1$. To this end, let us suppose that $\{x_m\}_{m=1}^\infty$ is a sequence in $\mathcal{S}^{k+1}(x)$ and $\lim_{m\to\infty}x_m=x_*$. By the definition of the set $\mathcal{S}^{k+1}(x)$, there exists a sequence $\{y_m\}_{m=1}^\infty$ such that $y_m\in \mathcal{S}^k(x)$ and either $\mathfrak{F}^f_{y_m}(x_m)=0$,  $\mathfrak{F}^b_{y_m}(x_m)=0$ or $\mathfrak{F}^c_{y_m}(x_m)=0$. Without loss of generality, we can assume that there exist  subsequences $\{x_{m_q}\}_{q=1}^\infty$ and $\{y_{m_q}\}_{q=1}^\infty$ of $\{x_m\}_{m=1}^\infty$ and $\{y_m\}_{m=1}^\infty$ such that $\mathfrak{F}^f_{y_{m_q}}(x_{m_q})=0$.  Since the sequence $\{y_{m_q}\}_{q=1}^\infty$  is a subset of $\mathcal{S}^k(x)$, which is closed and hence compact, there exists a subset of $\{y_{m_q}\}_{q=1}^\infty$, still denoted by $\{y_{m_q}\}_{q=1}^\infty$, such that this sequence has a limit $y_*\in\mathcal{S}^k(x)$ as $q$ tends to infinity. By the continuity of $\mathfrak{F}^f_y(x)$ in both $x$ and $y$, $\lim_{q\to\infty}\mathfrak{F}^f{y_{m_q}}(x_{m_q})=\mathfrak{F}^f_{y_*}(x_*)$. That implies $\mathfrak{F}^f_{y_*}(x_*)=0$ and hence $x_*\in\mathcal{S}^{k+1}(x)$. We finally conclude that the set $\mathcal{S}^{k+1}(x)$ is closed. By induction $\mathcal{S}^n(x)$ is closed for all $n\in\mathbb{N}\backslash\{0\}$. 

\end{proof}

\begin{corollary}\label{Cor:SxMeasurable}
The set $\mathcal{S}(x)$ is Lebesgue measurable. 
\end{corollary}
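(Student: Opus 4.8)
The plan is to read the conclusion off directly from Proposition \ref{Propo:SxClosed} together with the defining formula \eqref{ConnectionSet}. Recall that $\mathcal{S}(x) = \bigcup_{1\le n<\infty}\mathcal{S}^n(x)$, so that $\mathcal{S}(x)$ is a union of the sets $\mathcal{S}^n(x)$ indexed by the positive integers. The key structural observation is that this union is \emph{countable}, since the collision count $n$ ranges over $\mathbb{N}\backslash\{0\}$, a countable set.

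First I would invoke Proposition \ref{Propo:SxClosed} to record that each $\mathcal{S}^n(x)$ is a closed subset of $\mathbb{T}^3$, hence a Borel set. Next I would use that the Borel $\sigma$-algebra on $\mathbb{T}^3$ is closed under countable unions, so that $\mathcal{S}(x)$, being a countable union of closed sets, is an $F_\sigma$ set and in particular Borel. Finally, since every Borel subset of $\mathbb{T}^3$ is Lebesgue measurable, the measurability of $\mathcal{S}(x)$ follows at once.

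There is no genuine obstacle here: the statement is an immediate corollary of the closedness proved in the preceding proposition combined with the countability of the indexing in \eqref{ConnectionSet}. The only point worth flagging is that the argument relies essentially on the union being countable — a bare measurability argument would fail for an arbitrary (uncountable) union of closed sets — but this is guaranteed by the fact that each wave vector of $\mathcal{S}(x)$ is reached from $x$ in a finite number of collisions, so that the relevant index $n$ is always a positive integer.
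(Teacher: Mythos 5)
Your argument is correct and is exactly the paper's own proof, just spelled out: the paper also deduces measurability directly from Proposition \ref{Propo:SxClosed} and the definition \eqref{ConnectionSet}, i.e.\ from $\mathcal{S}(x)$ being a countable union of the closed sets $\mathcal{S}^n(x)$. Your added remark that countability of the index $n$ is the essential point is a fair clarification but not a departure from the paper's route.
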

\begin{proof}
The proof of this corollary follows directly from Proposition \ref{Propo:SxClosed} and the definition of $\mathcal{S}(x)$.
\end{proof}

\begin{remark}
The two sets $\mathcal{S}^1_f(x) $ and $\mathcal{S}^1_b(x) $ defined in \eqref{Propo:SxClosed:E1} and \eqref{Propo:SxClosed:E2} are indeed disjoint. This can be seen by a proof of contradiction. Suppose that $y$ is a common wave vector of both $\mathcal{S}^1_f(x) $ and $\mathcal{S}^1_b(x) $. This means
$$\sum_{i=1}^32[\cos(2\pi(y_i-x_i))+\cos(2\pi x_i) - \cos(2\pi y_i)]\  = \ 6+\omega_0,$$
and
$$\sum_{i=1}^32[\cos(2\pi(x_i-y_i))+\cos(2\pi y_i) - \cos(2\pi x_i)]\  = \ 6+\omega_0.$$
Taking the sum of the above two identities yields 
$$\sum_{i=1}^32\cos(2\pi(y_i-x_i))\  = \ 6+\omega_0.$$
The left hand side is smaller than or equal to $6$, while the right hand side is strictly greater than $6$ due to the fact that $\omega_0>0$. This leads to a contradiction; and thus, $\mathcal{S}^1_f(x) $ and $\mathcal{S}^1_b(x) $ are  disjoint. However, $\mathcal{S}^1_c(x) $ can have common wave vectors with both $\mathcal{S}^1_f(x) $ and $\mathcal{S}^1_b(x) $.
\end{remark}
%Suppose that $\{k_n\}_{n=1}^\infty$ is a sequence in $\mathcal{S}(x)$ and $\lim_{n\to\infty}k_n=k$. We will show that $k\in\mathcal{S}(x)$. Since $k_n$ belongs to $\mathcal{S}(x)$, it is connected to a different element $k'_n$ of $\mathcal{S}(x)$ by a collision. Let us split the sets of $\{k_n\}_{n=1}^\infty$ and $\{k_n'\}_{n=1}^\infty$ into two: $\{k_{i_n}\}$ is connected to $\{k'_{i_n}\}$ by a forward collision  and $\{k_{j_n}\}$ is connected to $\{k'_{j_n}\}$ by a backward collision. One of these two sequences has a infinite number of index. Suppose that the sequence  $\{k_{i_n}\}$ has an infinite number of index and $n$ takes all values in $\mathbb{N}\backslash\{0\}$. It is clear that $\lim_{n\to\infty}k_{i_n}=k$. On the other hand, since $|k'_{i_n}|<1$ for all $n$, there exists a subsequence, still denoted by $\{k'_{j_n}\}$ and a vector $k'\in\mathbb{T}^3$, such that $\lim_{n\to\infty}k'_{i_n}=k'$.  Since $\omega(k_{i_n})+\omega(k'_{i_n})=\omega(k_{i_n}+k'_{i_n})$
%\end{proof}

\subsubsection{Continuity of  set index functionals}\label{Sec:lip}
In the study of  the wave kinetic equation, we frequently encounter integrals of the types
\begin{equation}\label{RestrictedIntegral1}
\int_{\mathbb{T}^3}\delta(\omega(x)-\omega(x-y)-\omega(y))f(y)\mathrm{d}y,
\end{equation}
\begin{equation}\label{RestrictedIntegral2}
\int_{\mathbb{T}^3}\delta(\omega(y)-\omega(y-x)-\omega(x))f(y)\mathrm{d}y,
\end{equation}
and
\begin{equation}\label{RestrictedIntegral3}
\int_{\mathbb{T}^3}\delta(\omega(x+y)-\omega(x)-\omega(y))f(y)\mathrm{d}y.
\end{equation}
Special cases of \eqref{RestrictedIntegral1}-\eqref{RestrictedIntegral2}-\eqref{RestrictedIntegral3} involve $f(y)=\chi_A(y)$, the characteristic function of a Lebesgue measurable set $A$. \begin{definition}[Index functionals of sets]\label{Def:Measure}
	Let $A$ be a Lebesgue measurable set, we define the following three functionals.
	\begin{itemize}
		\item[(I)] The ``forward collision'' index of the set $A$:
		\begin{equation}\label{Def:Measure:E1}
			\mu_1[A](x) \ := \ \int_{\mathbb{R}}\int_{\mathbb{T}^3}e^{it(\omega(x)-\omega(x-y)-\omega(y))}\chi_{A}(y)\mathrm{d}y\mathrm{d}t,
		\end{equation}
		where $\chi_A$ is the characteristic function of the set $A$. 
		\item[(II)] The ``backward collision'' index of the set $A$:
		\begin{equation}\label{Def:Measure:E1}
			\mu_2[A](x) \ := \ \int_{\mathbb{R}}\int_{\mathbb{T}^3}e^{it(\omega(y)-\omega(y-x)-\omega(x))}\chi_{A}(y)\mathrm{d}y\mathrm{d}t,
		\end{equation}
		where $\chi_A$ is the characteristic function of the set $A$. 
		\item[(III)] The ``central collision'' index of the set $A$:
		\begin{equation}\label{Def:Measure:E1}
			\mu_3[A](x) \ := \ \int_{\mathbb{R}}\int_{\mathbb{T}^3}e^{it(\omega(x+y)-\omega(x)-\omega(y))}\chi_{A}(y)\mathrm{d}y\mathrm{d}t,
		\end{equation}
		where $\chi_A$ is the characteristic function of the set $A$. 
	\end{itemize}
\end{definition} For the sake of simplicity, in this section, we denote $\mu_1(\mathbb{T}^3)$, $\mu_2(\mathbb{T}^3)$ and $\mu_3(\mathbb{T}^3)$  by $F(x)$,  $G(x)$ and $H(x)$. 
\begin{proposition} \label{Propo:Lip}  The  functions $F(x)$,  $G(x)$ and $H(x)$ are   {continuous on the set $$\mathfrak{S}=\Big\{x=(x^1,x^2,x^3)\in \mathbb{T}^3 \mbox{ in which } x^i\neq \pm\frac12,0, \mbox{  for all } i=1,2,3\Big\}.$$}
\end{proposition}

\begin{proof}
Notice that 
	\begin{equation}
		\label{Propo:Measure:E6}
		\omega(x)-\omega(x-y)-\omega(y) \ = \ -\omega_0 \ - \ 6 \ + \ \sum_{i=1}^32[\cos(2\pi x^i-2\pi y^i) \ + \ \cos(2\pi y^i) \ - \ \cos(2\pi x^i)] ,
	\end{equation}
	where $x=(x^1,x^2,x^3)$, $y=(y^1,y^2,y^3)$. \\
	We will need to bound
	
	\begin{equation}
		\label{Propo:Measure:E7}\begin{aligned}
			\mathcal{J} \ = \ & \int_{\mathbb{T}^3}e^{it(\sum_{i=1}^32[\cos(2\pi x^i-2\pi y^i) \ + \ \cos(2\pi y^i)])} \mathrm{d}y\\
			= \ & \int_{\mathbb{T}}e^{it2[\cos(2\pi x^1-2\pi y^1) \ + \ \cos(2\pi y^1)]} \mathrm{d}y^1\int_{\mathbb{T}} e^{it2[\cos(2\pi x^2-2\pi y^2) \ + \ \cos(2\pi y^2)]} \mathrm{d}y^2\times\\
			\ &\times\int_{\mathbb{T}}e^{it2[\cos(2\pi x^3-2\pi y^3) \ + \ \cos(2\pi y^3)]} \mathrm{d}y^3\\
			= \ & \mathcal{J}_1\times \mathcal{J}_2\times \mathcal{J}_3
	\end{aligned}\end{equation}
	which is a product of three oscillation integrals with  phases $t\Phi_i(y)$, where $\Phi_i(y)=2[\cos(2\pi x^i-2\pi y^i) \ + \ \cos(2\pi y^i)]$, $i=1,2,3.$

	To estimate \eqref{Propo:Measure:E7}, we will use the method of stationary phase. Let us  point out that in \cite{germain2017optimal}, the authors use different kinds of techniques,  to estimate integrals of similar types but for different classes of dispersion relations. Notice that $\partial_{y^i} \Phi_i(y^i)=-4\pi\sin(2\pi y^i-2\pi x^i)-4\pi\sin(2\pi y^i)=0$ when $y^i=\frac{x^i}{2}$,    $y^i=\frac12+\frac{x^i}{2}$,   or $x^i=\pm\frac12$. Observe that when $y^i=\frac{x^i}{2}$,    $y^i=\frac12+\frac{x^i}{2}$, we have $|\partial_{y^iy^i}\Phi_i(y^i)| =8\pi^2|\cos(2\pi y^i-2\pi x^i)+\cos(2\pi y^i)|=16\pi^2|\cos(\pi x^i)|=8\pi^2|1+e^{i2\pi x^i}|.$  
	
 {We observe that all $x^i$, $i=1,2,3$, need to be different from $\pm\frac12$. This fact could be seen by a proof of contradiction, in which we suppose that $x^1$ is equal to $\frac12$ or $-\frac12$ as follows. By Proposition \ref{Propo:SxClosed},  $\mathcal{S}(x)$  is non-empty,} then either 
	$$
	0=\omega(x)-\omega(x-y)-\omega(y) \ = \ -\omega_0 \ - \ 6 \ + \ \sum_{i=1}^32[\cos(2\pi x^i-2\pi y^i) \ + \ \cos(2\pi y^i) \ - \ \cos(2\pi x^i)] ,
	$$  $$
	0=\omega(x+y)-\omega(x)-\omega(y) \ = \ -\omega_0 \ - \ 6 \ + \ \sum_{i=1}^32[\cos(2\pi x^i) \ + \ \cos(2\pi y^i) \ - \ \cos(2\pi x^i+2\pi y^i)],
	$$
	or
	$$
	0=\omega(y)-\omega(x)-\omega(y-x) \ = \ -\omega_0 \ - \ 6 \ + \ \sum_{i=1}^32[\cos(2\pi x^i) \ + \ \cos(2\pi y^i-2\pi x^i) \ - \ \cos(2\pi y^i)],
	$$ has to have a solution. Let us consider the first equation. Plugging  the values $\pm\frac{1}{2}$ of $x^1$  into the equation yields $$
	\omega_0 \ + \ 4 \ =\sum_{i=2}^32[\cos(2\pi x^i-2\pi y^i) \ + \ \cos(2\pi y^i) \ - \ \cos(2\pi x^i)],
	$$ which has no solutions since $\omega_0+4>6$ and $[\cos(2\pi \alpha-2\pi \beta) \ + \ \cos(2\pi \beta) \ - \ \cos(2\pi \alpha)]\le \frac32$ for all $\alpha,\beta\in\mathbb{T}$. Now, we consider the second equation, and plug the values $\pm\frac{1}{2}$ of  $x^1$ into the equation to get $$
	\omega_0 \ + \ 8 \  - \  4 \cos(2\pi y^1) =\sum_{i=2}^32[\cos(2\pi x^i) \ + \ \cos(2\pi y^i) \ - \ \cos(2\pi x^i+2\pi y^i)],
	$$
	which also has no solution since $\omega_0 \ + \ 8 \  - \  4 \cos(2\pi y^1)>6$ and  $[\cos(2\pi \alpha) \ + \ \cos(2\pi \beta) \ - \ \cos(2\pi \alpha+2\pi \beta)]\le \frac32$ for all $\alpha,\beta\in\mathbb{T}$.  Finally, in the last case, the same argument gives
	$$
	\omega_0 \ + \ 8 \  + \  4 \cos(2\pi y^1) =\sum_{i=2}^32[\cos(2\pi x^i) \ + \ \cos(2\pi y^i-2\pi x^i) \ - \ \cos(2\pi y^i)],
	$$
	which again has no solution.
	
	Since $x^i$ is different from  $\pm\frac12$, it is clear that $\partial_{y^i} \Phi_i(y^i)=-4\pi\sin(2\pi y^i-2\pi x^i)-4\pi\sin(2\pi y^i)=0$ when $y^i=\frac{x^i}{2}$ and   $y^i=\frac12+\frac{x^i}{2}$.   By the method of stationary phase

	\begin{equation}
		\label{Propo:Measure:E10}\mathcal{J}_i\lesssim \frac{1}{\langle t\rangle^\frac12\sqrt{|1+e^{i2\pi x^i}|}},
	\end{equation}
	when $x^i$ is different from  $\pm\frac12$.
	
	Multiplying all inequalities \eqref{Propo:Measure:E10} for $i=1,2,3$ yields
	\begin{equation}
		\label{Propo:Measure:E11}\mathcal{J}\lesssim \frac{1}{\langle t\rangle^\frac32\sqrt{|1+e^{i2\pi x^1}||1+e^{i2\pi x^2}||1+e^{i2\pi x^3}|}}.
	\end{equation}

	Let $x$ be a point in $\mathfrak{S}$ and a sequence $\{x_n\}_{n=1}^\infty\subset\mathfrak{S}$ such that $\lim_{n\to\infty}x_n=x$. Since the set $\mathbb{T}^3\backslash\mathfrak{S}$ is closed, without loss of generality, we suppose that there exists a ball $B(x,r)$ with radius $r$ and centered at $x$ such that $B(x,r)\cap (\mathbb{T}^3\backslash\mathfrak{S})=\emptyset$ and then $\{x_n\}_{n=1}^\infty\subset B (x,r)$. 
	From the  assumption $B(x,r)\cap (\mathbb{T}^3\backslash\mathfrak{S})=\emptyset$, it follows  
	\begin{equation}\label{Propo:Lip:E1}
		\begin{aligned}
			\left|\int_{\mathbb{T}^3}e^{it(\omega(x)-\omega(x-y)-\omega(y))}\mathrm{d}y\right| \ \lesssim & \ \frac{1}{\langle t\rangle^\frac32 \sqrt{|1+e^{2\pi x^1}| |1+e^{2\pi x^2}| |1+e^{2\pi x^3}|}} \ \lesssim \ 1.
		\end{aligned}
	\end{equation}
	By the Lebesgue dominated convergence theorem, $\lim_{n\to\infty}F(x_n)=F(x)$ and the function $F$ is then continuous on $\mathfrak{S}$.
 By the same argument, $G,H$ are also  continuous.
\end{proof}
\begin{corollary}\label{Coll:Edges}
	The  edges, i.e. the set $\mathbb{T}^3\backslash\mathfrak{S}$ of all wave vectors $y=(y^1,y^2,y^3)$ in which there is an index $i\in\{1,2,3\}$ such that $y^i= \pm\frac12$ or $0$, is a subset of the no-collision region $\mathfrak{I}$.
\end{corollary}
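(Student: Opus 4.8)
The plan is to read the statement off, as a contrapositive, from the per-coordinate estimates already carried out inside the proof of Proposition \ref{Propo:Measure}. By definition a wave vector $y$ lies in the no-collision region $\mathfrak{I}$ exactly when it is connected to no other wave vector, that is, when none of the three equations $\mathfrak{F}^f_y(z)=0$, $\mathfrak{F}^b_y(z)=0$, $\mathfrak{F}^c_y(z)=0$ admits a solution $z\in\mathbb{T}^3$. I would therefore fix an arbitrary edge point $y=(y^1,y^2,y^3)$ with $y^i\in\{0,\pm\tfrac12\}$ for some $i$; since $\mathfrak{F}^f$, $\mathfrak{F}^b$, $\mathfrak{F}^c$ are symmetric under permuting the three coordinates (a permutation acting simultaneously on $y$ and on the free variable $z$ preserves each equation), I may take $i=1$, and it suffices to rule out all solutions of the three collision equations.

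First I would treat $y^1=\pm\tfrac12$, where $\cos(2\pi y^1)=-1$. Substituting this value into the forward, backward, and central energy-conservation identities collapses the first-coordinate block to a constant, after which everything depending on $y^1$ is moved to the left-hand side, leaving on the right a sum over the two remaining coordinates. In each of the three equations the left-hand side is then bounded below by $\omega_0+4$ (for the two equations retaining a $\cos(2\pi y^1)$ term this uses $\cos(2\pi y^1)\ge -1$), while the right-hand side is at most $6$: after the elementary rewriting $\cos u+\cos v-\cos(u+v)=\Upsilon(u/2\pi,v/2\pi)$ each two-coordinate block is a value of $\Upsilon$, hence $\le\tfrac32$ by \eqref{Propo:Snonempty:E2}, and there are two such blocks carrying the prefactor $2$. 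Since $\omega_0>2$ gives $\omega_0+4>6$, no solution can exist; these are exactly the three contradictions recorded for $x^i=\pm\tfrac12$ in the proof of Proposition \ref{Propo:Measure}.

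The case $y^1=0$, where $\cos(2\pi y^1)=1$, is handled by the same bookkeeping: the three identities once more produce a left-hand side no smaller than $\omega_0+4$ against a right-hand side no larger than $6$, which are the contradictions recorded for $x^i=0$ in that proof. In either case all three collision equations fail, so $y$ is connected to no wave vector and hence $y\in\mathfrak{I}$; as $y$ was an arbitrary edge point, the edges lie inside $\mathfrak{I}$. I do not expect any genuine obstacle here: the only analytic inputs are the elementary bound $\Upsilon\le\tfrac32$ and the hypothesis $2<\omega_0$, both already in hand, and the remaining work is the purely mechanical substitution $\cos(2\pi y^1)\in\{-1,1\}$ into the three conservation laws. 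The one point deserving a line of care is verifying that, after substitution, each two-coordinate block really does fall under the $\Upsilon\le\tfrac32$ bound, which follows from the change of variables noted above.
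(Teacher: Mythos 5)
Your proposal is correct and is essentially the paper's own proof: the paper disposes of this corollary in one line by citing the computations inside the proof of Proposition \ref{Propo:Measure}, which are precisely the six contradictions (three collision equations, for $x^i=\pm\tfrac12$ and for $x^i=0$) that you reconstruct via the $\Upsilon\le\tfrac32$ bound and $\omega_0+4>6$, read contrapositively. The only blemish is notational: after substituting $y^1\in\{0,\pm\tfrac12\}$, the surviving cosine term in two of the equations involves the partner variable (your $z^1$), not $y^1$, so the bound you invoke should read $\pm\cos(2\pi z^1)\ge -1$; the argument is unaffected.
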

\begin{proof}
	The corollary follows directly from the proof of Proposition \ref{Propo:Lip}.
\end{proof}

\subsubsection{Restrictions on $\mathcal{S}(x)$}\label{Sec:Restriction} 

\begin{proposition}\label{Lemma:RestrictionSx}
	Given any function $f\in \mathbb{L}^1(\mathbb{T}^3)$ and a collisional invariant region $\mathcal{S}(x)$. Define restriction of $f$ on $\mathcal{S}(x)$ as follows
	\begin{equation}
		\label{Lemma:RestrictionSx:1}
		f_{|_{\mathcal{S}(x)}}(y) \ =  \ f(y) \mbox{ if } y\in\mathcal{S}(x) \mbox{ and }  f_{|_{\mathcal{S}(x)}}(y) \ =  \ 0 \mbox{ if } y\in\mathbb{T}^3\backslash\mathcal{S}(x).
	\end{equation}
	Then, in the distributional sense, we have \begin{equation}\label{Lemma:RestrictionSx:2}
		\int_{\mathbb{T}^3}\delta(\omega(x)-\omega(x-y)-\omega(y))f(y)\mathrm{d}y \ = \ \int_{\mathbb{T}^3}\delta(\omega(x)-\omega(x-y)-\omega(y))f_{|_{\mathcal{S}(x)}}(y)\mathrm{d}y,
	\end{equation}
	\begin{equation}\label{Lemma:RestrictionSx:3}
		\int_{\mathbb{T}^3}\delta(\omega(y)-\omega(y-x)-\omega(x))f(y)\mathrm{d}y \ = \ \int_{\mathbb{T}^3}\delta(\omega(y)-\omega(y-x)-\omega(x))f_{|_{\mathcal{S}(x)}}(y)\mathrm{d}y,
	\end{equation}
	and
	\begin{equation}\label{Lemma:RestrictionSx:4}
		\int_{\mathbb{T}^3}\delta(\omega(x+y)-\omega(x)-\omega(y))f(y)\mathrm{d}y \ = \ \int_{\mathbb{T}^3}\delta(\omega(x+y)-\omega(x)-\omega(y))f_{|_{\mathcal{S}(x)}}(y)\mathrm{d}y.
	\end{equation}
\end{proposition}
\begin{proof}  {We only prove \eqref{Lemma:RestrictionSx:2}, as the proofs of \eqref{Lemma:RestrictionSx:3}-\eqref{Lemma:RestrictionSx:4} follow by the same argument.
 For a fixed value of $x$, we denote by $A_\theta$ with $\theta>0$ the set of all $z$ in $A$  such that \begin{equation}
		\label{Propo:Measure:E17}|\omega(x)-\omega(z)-\omega(x-z)|>\theta>0\end{equation}
	for all $z$ in $A$. }
	
	Let us introduce the following approximation 
	\begin{equation}
		\label{Propo:Measure:E3}
		\int_\mathbb{R}\int_{\mathbb{T}^3}e^{it(\omega(x)-\omega(x-y)-\omega(y))-\epsilon^2t^2}\chi_{A_\theta}(y) f(t)\mathrm{d}y\mathrm{d}t.
	\end{equation}
	Integrating in $t$, we obtain from \eqref{Propo:Measure:E3} 
	\begin{equation}
		\label{Propo:Measure:E4}
		\frac{C}{\epsilon}\int_{\mathbb{T}^3}e^{-\frac{\pi(\omega(x)-\omega(x-y)-\omega(y)^2}{\epsilon^2}}\chi_{A_\theta}(y)f(y) \mathrm{d}y,
	\end{equation}
	for some universal positive constant $C$.
	
	Combining  \eqref{Propo:Measure:E17} with the approximation \eqref{Propo:Measure:E3}, we find
	
	\begin{equation*}
		\begin{aligned}
		&	\int_\mathbb{R}\int_{\mathbb{T}^3}e^{it(\omega(x)-\omega(x-z)-\omega(z))-\epsilon^2t^2}\chi_{A_\theta}(z)f(z)  \mathrm{d}y\mathrm{d}t\\ \ = & \ 
			\frac{C}{\epsilon}\int_{\mathbb{T}^3}e^{-\frac{\pi(\omega(x)-\omega(x-z)-\omega(z))^2}{\epsilon^2}}\chi_{A_\theta}(z)f(z)  \mathrm{d}z\\ 
			\ \lesssim & \ 
			\frac{1}{\epsilon}\int_{\mathbb{T}^3}e^{-\frac{\pi\theta^2}{\epsilon^2}}\chi_{A_\theta}(z)f(z)  \mathrm{d}z. 
		\end{aligned}
	\end{equation*}
	Using the fact that $\chi_{A_\theta}$ is a subset of $\mathbb{T}^3$, we deduce
	\begin{equation}
		\label{Propo:Measure:E18}
		\begin{aligned}
			\int_\mathbb{R}\int_{\mathbb{T}^3}e^{it(\omega(x)-\omega(x-z)-\omega(z))-\epsilon^2t^2}\chi_{A_\theta}(z)f(z)  \mathrm{d}z\mathrm{d}t 
			\ \lesssim & \ 
			\frac{e^{-\frac{\pi\theta^2}{\epsilon^2}}}{\epsilon} \to 0 \mbox{ as } \epsilon \to 0.
		\end{aligned}
	\end{equation}
Let $\varphi(x)$ be a test function in $C^\infty(\mathbb{T}^d)$. 
	Again, the same stationary phase argument used in Proposition \ref{Propo:Lip} can be applied to show that 
		\begin{equation}
		\label{Propo:Measure:E19}
		\begin{aligned}
			\Big|\int_\mathbb{R}\int_{\mathbb{T}^3}e^{it(\omega(x)-\omega(x-z)-\omega(z))-\epsilon^2t^2}\varphi(x)  \mathrm{d}z\mathrm{d}t \Big|
			\ \lesssim & \ 1,
		\end{aligned}
	\end{equation}
	uniformly in $\epsilon$. By the Lebesgue dominated convergence theorem, we find

	\begin{equation}
		\label{Propo:Measure:E20}		\begin{aligned}
&		\int_\mathbb{R}\int_{\mathbb{T}^6}e^{it(\omega(x)-\omega(x-z)-\omega(z))}\chi_A(z)\varphi(x) \mathrm{d}z\mathrm{d}x\mathrm{d}t\\ =\ & \lim_{\theta\to0}\lim_{\epsilon\to0}\int_\mathbb{R}\int_{\mathbb{T}^6}e^{it(\omega(x)-\omega(x-z)-\omega(z))-\epsilon^2t^2}\chi_{A_\theta}(z)f(z)\varphi(x) \mathrm{d}z\mathrm{d}x\mathrm{d}t =0.		\end{aligned}
	\end{equation}

\end{proof}

\subsubsection{Weak formulation, local conservation of energy on collisional invariant regions}\label{Sect:WeakClassical}

\begin{lemma}\label{Lemma:WeakFormulationClassical}
For any smooth function $f(k)$, there holds 
\begin{eqnarray*}
\int_{\mathbb{T}^3}Q_c[f](k)\varphi(k)\mathrm{d}k
& = & \iiint_{\mathbb{T}^9}[\omega\omega_1\omega_2]^{-1}\delta(k-k_1-k_2)\delta(\omega-\omega_1-\omega_2)\times\\
&& \times [f_1f_2-ff_1-ff_2]
\Big( \varphi(k)-\varphi(k_1)-\varphi(k_2) \Big) \; \mathrm{d}k\mathrm{d}k_1\mathrm{d}k_2
\end{eqnarray*}
for any smooth test function $\varphi$. 

If $\varphi$ is supported in a collisional invariant region $\mathcal{S}(x)$, then, we also have
\begin{eqnarray*}
\int_{\mathbb{T}^3}Q_c[f](k)\varphi(k)\mathrm{d}k
& = & \iiint_{\mathcal{S}(x)\times\mathcal{S}(x)\times\mathcal{S}(x)}[\omega\omega_1\omega_2]^{-1}\delta(k-k_1-k_2)\delta(\omega-\omega_1-\omega_2)\times\\
&& \times [f_1f_2-ff_1-ff_2]
\Big( \varphi(k)-\varphi(k_1)-\varphi(k_2) \Big) \; \mathrm{d}k\mathrm{d}k_1\mathrm{d}k_2.
\end{eqnarray*}
\end{lemma}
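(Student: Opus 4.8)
The plan is to read the identity as a symmetrization of the collision operator, exactly as one does for the weak form of a Boltzmann-type operator. I would introduce the fully symmetric kernel $R(k,k_1,k_2):=[\omega\omega_1\omega_2]^{-1}\delta(k-k_1-k_2)\delta(\omega-\omega_1-\omega_2)[f_1f_2-ff_1-ff_2]$ and note that $R$ is invariant under the exchange $k_1\leftrightarrow k_2$, since the prefactor $[\omega\omega_1\omega_2]^{-1}$, both delta factors, and the bracket $[f_1f_2-ff_1-ff_2]$ are all symmetric in $(k_1,k_2)$. Testing $Q_c[f]$ against $\varphi$ and integrating in $k$ splits $\int_{\mathbb{T}^3}Q_c[f]\varphi$ into a gain-type piece $A=\iiint_{\mathbb{T}^9}R(k,k_1,k_2)\varphi(k)$, already in the desired form, and a loss-type piece coming from the second line of \eqref{PhononEqC}, which I would reduce to $A$ by relabelling the variables.

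For the loss piece $B=\iiint_{\mathbb{T}^9}[\omega\omega_1\omega_2]^{-1}\delta(k_1-k-k_2)\delta(\omega_1-\omega-\omega_2)[f_2f-ff_1-f_1f_2]\varphi(k)$ I would perform the measure-preserving change of variables that swaps the blocks $k\leftrightarrow k_1$ (a permutation of the three $\mathbb{T}^3$-variables, hence of Jacobian one). Under this swap $\delta(k_1-k-k_2)$ becomes $\delta(k-k_1-k_2)$, $\delta(\omega_1-\omega-\omega_2)$ becomes $\delta(\omega-\omega_1-\omega_2)$, the symmetric factor $[\omega\omega_1\omega_2]^{-1}$ is unchanged, the bracket $[f_2f-ff_1-f_1f_2]$ is carried exactly onto $[f_1f_2-ff_1-ff_2]$, and $\varphi(k)$ becomes $\varphi(k_1)$. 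Hence $B=\iiint_{\mathbb{T}^9}R(k,k_1,k_2)\varphi(k_1)$. Using once more the $k_1\leftrightarrow k_2$ symmetry of $R$ to write $2B=\iiint R\,\varphi(k_1)+\iiint R\,\varphi(k_2)$, I obtain $\int_{\mathbb{T}^3}Q_c[f]\varphi=A-2B=\iiint_{\mathbb{T}^9}R(k,k_1,k_2)\big(\varphi(k)-\varphi(k_1)-\varphi(k_2)\big)$, which is the first identity.

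For the second identity I would localize using the collision structure encoded in the deltas. On the support of $\delta(k-k_1-k_2)\delta(\omega-\omega_1-\omega_2)$ one has $k=k_1+k_2$ and $\omega(k)=\omega(k_1)+\omega(k_2)$; by \eqref{BackwardCollision}--\eqref{CentralCollision} this means $k_1$ and $k_2$ are connected by a central collision and each is connected to $k$ by a backward collision. Consequently the three wavevectors $k,k_1,k_2$ lie in one and the same collisional invariant region. Since $\varphi$ is supported in $\mathcal{S}(x)$, the factor $\varphi(k)-\varphi(k_1)-\varphi(k_2)$ vanishes unless at least one of $k,k_1,k_2$ belongs to $\mathcal{S}(x)$, in which case, by the preceding sentence, all three do. Thus the integrand is supported in $\mathcal{S}(x)\times\mathcal{S}(x)\times\mathcal{S}(x)$, so $\mathbb{T}^9$ may be replaced by $\mathcal{S}(x)^3$.

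The symmetrization in the first two paragraphs is routine; the step that needs genuine care, and where the earlier machinery pays off, is making this last localization rigorous against the singular measure $\delta(\omega-\omega_1-\omega_2)$. The clean way is to integrate out $k$ with $\delta(k-k_1-k_2)$, leaving a central-collision delta $\delta(\omega(k_1+k_2)-\omega(k_1)-\omega(k_2))$ in the variables $(k_1,k_2)$, and then to invoke Corollary \ref{Lemma:RestrictionSx}: for such delta-restricted integrals, replacing $f$ (and likewise the $\varphi$-factor) by its restriction to $\mathcal{S}(x)$ leaves the value unchanged, precisely because the restriction property derived from Proposition \ref{Propo:Measure} guarantees that these delta-supported integrals only see the collisional region. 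Applying the restriction in each variable then confines the integral rigorously to $\mathcal{S}(x)^3$.
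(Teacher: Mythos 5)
Your proof is correct and follows essentially the same route as the paper: the first identity is obtained by the same relabelling/symmetrization of the loss term (the paper splits the factor of $2$ into two copies and switches $k\leftrightarrow k_1$ and $k\leftrightarrow k_2$, whereas you switch once and invoke the $k_1\leftrightarrow k_2$ symmetry of the kernel --- the same computation, with your bookkeeping being the slightly more precise of the two), and the second identity is deduced, exactly as in the paper, from Corollary \ref{Lemma:RestrictionSx}. Your added connectivity argument, explaining why the delta constraints confine $k,k_1,k_2$ to a single collisional invariant region, is a helpful elaboration of what the paper dismisses as ``straightforward.''
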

\begin{proof}
We have 
$$\begin{aligned}
& \ \int_{\mathbb{T}^3}Q[f](k)\varphi(k)\mathrm{d}k \ = \\ 
& = \ \int_{\mathbb{T}^9}[\omega\omega_1\omega_2]^{-1}\delta(k-k_1-k_2)\delta(\omega-\omega_1-\omega_2)[f_1f_2-ff_1-ff_2]\varphi(k)\mathrm{d}k\mathrm{d}k_1\mathrm{d}k_2\\
& - \ \int_{\mathbb{T}^9}[\omega\omega_1\omega_2]^{-1}\delta(k_1-k-k_2)\delta(\omega_1-\omega-\omega_2)[f_2f-ff_1-f_1f_2]\varphi(k)\mathrm{d}k\mathrm{d}k_1\mathrm{d}k_2\\
& - \ \int_{\mathbb{T}^9}[\omega\omega_1\omega_2]^{-1}\delta(k_1-k-k_2)\delta(\omega_1-\omega-\omega_2)[f_2f-ff_1-f_1f_2]\varphi(k)\mathrm{d}k\mathrm{d}k_1\mathrm{d}k_2,
\end{aligned}
$$by switching the variables $k\leftrightarrow k_1$ and $k\leftrightarrow k_2$ in the second and third integrals, respectively, the first identity follows. The second identity follows straightforwardly from Corollary \ref{Lemma:RestrictionSx} and the first identity.  
\end{proof}

As a consequence, we obtain the following  corollary. 

\begin{corollary}[Conservation of   energy on  collisional invariant regions]\label{Cor-ConservatioMomentumC} Smooth solutions $f(t,k)$ of \eqref{PhononEqC}, with initial data $f(0,k) =f_0(k)$, satisfy 
\begin{eqnarray}\label{Coro:ConservatioMomentumC} 
\int_{\mathcal{S}(x)}f(t,k)\omega(k)\mathrm{d}k&=&\int_{\mathcal{S}(x)}f_0(k)\omega(k)\mathrm{d}k. 
\end{eqnarray}
for all $t\ge 0$ and for all $x\in \mathfrak{V}$, defined in Proposition \ref{Propo:ConnectionSet}. 
\end{corollary}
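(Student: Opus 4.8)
The plan is to differentiate the two integrals in time and show each derivative vanishes, feeding the test functions $\varphi(k)=k^j$ ($j=1,2,3$) for momentum and $\varphi(k)=\omega(k)$ for energy into the weak formulation of Lemma \ref{Lemma:WeakFormulationClassical}. Since the assumed solution lies in $C^1((0,\infty),C^1(\mathbb{T}^3))$ and $\mathcal{S}(x)$ is Lebesgue measurable of finite measure (Corollary \ref{Cor:SxMeasurable}), I may differentiate under the integral sign and substitute $\partial_t f=Q_c[f]$ to get
$$\frac{d}{dt}\int_{\mathcal{S}(x)}f(t,k)\,\varphi(k)\,\mathrm{d}k = \int_{\mathcal{S}(x)}Q_c[f](t,k)\,\varphi(k)\,\mathrm{d}k = \int_{\mathbb{T}^3}Q_c[f](t,k)\,\varphi(k)\chi_{\mathcal{S}(x)}(k)\,\mathrm{d}k,$$
so the task reduces to proving the right-hand side is zero for these two choices of $\varphi$.

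Next I would insert the bounded measurable multiplier $\tilde\varphi:=\varphi\,\chi_{\mathcal{S}(x)}$ into the first identity of Lemma \ref{Lemma:WeakFormulationClassical}. That identity is produced purely by the relabeling $k\leftrightarrow k_1$, $k\leftrightarrow k_2$ together with Fubini, so it survives for any $\tilde\varphi\in L^\infty(\mathbb{T}^3)$, not merely smooth ones, yielding
$$\int_{\mathcal{S}(x)}Q_c[f]\varphi\,\mathrm{d}k = \iiint_{\mathbb{T}^9}[\omega\omega_1\omega_2]^{-1}\delta(k-k_1-k_2)\delta(\omega-\omega_1-\omega_2)[f_1f_2-ff_1-ff_2]\big(\tilde\varphi(k)-\tilde\varphi(k_1)-\tilde\varphi(k_2)\big)\,\mathrm{d}k\,\mathrm{d}k_1\,\mathrm{d}k_2.$$
The key structural input is the collisional closure of $\mathcal{S}(x)$: on the support of the two delta functions one has $k=k_1+k_2$ and $\omega(k)=\omega(k_1)+\omega(k_2)$, i.e.\ a genuine forward/backward collision, and the closure property already used to establish Corollary \ref{Lemma:RestrictionSx} forces either all three of $k,k_1,k_2$ into $\mathcal{S}(x)$ or none of them. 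Hence $\chi_{\mathcal{S}(x)}(k)=\chi_{\mathcal{S}(x)}(k_1)=\chi_{\mathcal{S}(x)}(k_2)$ on the collision manifold, the characteristic factors may be pulled out, and the integral collapses to exactly the symmetrized form over $\mathcal{S}(x)^3$ of the second identity of Lemma \ref{Lemma:WeakFormulationClassical}, now with $\varphi$ in place of $\tilde\varphi$.

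It then remains only to evaluate $\varphi(k)-\varphi(k_1)-\varphi(k_2)$ on the collision manifold. For $\varphi=\omega$ this is $\omega(k)-\omega(k_1)-\omega(k_2)$, vanishing identically on the support of $\delta(\omega-\omega_1-\omega_2)$; for $\varphi(k)=k^j$ it is $k^j-k_1^j-k_2^j$, vanishing on the support of $\delta(k-k_1-k_2)$. In the momentum case I must invoke that $\mathcal{S}(x)$ lies in the interior of $\mathbb{T}^3$, away from the edges (Corollary \ref{Coll:Edges}), so that the toroidal relation $k=k_1+k_2$ lifts to a genuine vector identity among representatives with no integer wrap-around — the same feature that legitimizes the linear invariant $b_x\cdot k$ in the equilibrium. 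In both cases the integrand vanishes, giving $\frac{d}{dt}\int_{\mathcal{S}(x)}f\varphi\,\mathrm{d}k=0$, and integrating from $0$ to $t$ produces the two conservation identities.

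I expect the principal obstacle to be the passage from the smooth test functions allowed in Lemma \ref{Lemma:WeakFormulationClassical} to the non-smooth multiplier $\chi_{\mathcal{S}(x)}$: because $\mathcal{S}(x)$ is only an $F_\sigma$ set (a countable union of the closed sets $\mathcal{S}^n(x)$ of Proposition \ref{Propo:SxClosed}) and need not be open, one cannot simply approximate $\chi_{\mathcal{S}(x)}$ by smooth functions supported inside $\mathcal{S}(x)$. The clean route is the relabeling observation above, which keeps the weak identity valid for bounded measurable test functions and lets the collisional closure do the remaining work; the technical point to verify is the integrability of the kernel against $f$ needed to justify Fubini, which is controlled by the boundedness estimates of Proposition \ref{Propo:Measure}.
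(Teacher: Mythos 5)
Your proof takes essentially the same route as the paper: the paper's own proof is a one-line application of Lemma \ref{Lemma:WeakFormulationClassical} with $\varphi(k)=k^1,k^2,k^3$ and $\omega(k)$, implicitly localized to $\mathcal{S}(x)$ via the second identity of that lemma, so that $\varphi(k)-\varphi(k_1)-\varphi(k_2)$ vanishes on the resonant manifold and the time derivative of the local momentum and energy is zero. Your extra steps --- extending the weak identity to the bounded measurable multiplier $\chi_{\mathcal{S}(x)}\varphi$, invoking the collisional closure of $\mathcal{S}(x)$, and flagging the torus wrap-around issue for $\varphi(k)=k^j$ --- merely make explicit what the paper leaves implicit, and are consistent with its remarks.
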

\begin{proof} This follows from Lemma \ref{Lemma:WeakFormulationClassical} by taking $\varphi(k) = \omega(k)$ with $k=(k^1,k^2,k^3)$.
 \end{proof}
\subsubsection{Local equilibria on collisional invariant regions}\label{Sec:LocalEqui}
In this section, we establish the form of local equilibria on collisional invariant regions. The key different between these local equilibria and the equilibria of classical kinetic equations is that these equilibria are only defined locally on collisional invariant regions. This is a very special feature of the 3-wave kinetic equation.

\begin{lemma}[$C^2$-collisional invariants]\label{Lemma:CollsionInvariant}
Let $\psi\in C^2(\mathcal{S}(x))$ be a  collisional invariant on the collisional invariant region $\mathcal{S}(x)$, in the following sense. For any wave vectors $k, k_1, k_2 \in \mathcal{S}(x)$, $$k=k_1+k_2+z, \mbox{ for some }z\in\mathbb{Z}^d,\ \ \ \ \omega(k)=\omega(k_1)+\omega(k_2),$$ we have $$\psi(k)=\psi(k_1)+\psi(k_2).$$ Then there exist a constant $a_x\in\mathbb{R}$, such that $$\psi(k)=a_x \omega(k).$$
\end{lemma}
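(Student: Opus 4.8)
The plan is to exploit that the conservation laws are themselves collisional invariants and then to show that they span the entire space of such invariants. Since every collision obeys $k=k_1+k_2$ and $\omega(k)=\omega(k_1)+\omega(k_2)$, each of the four functions $\omega,k^1,k^2,k^3$ satisfies the additive relation \emph{exactly}, and hence so does $a_x\omega+b_x\cdot k$ for any $a_x\in\mathbb{R}$, $b_x\in\mathbb{R}^3$. It therefore suffices to prove that these are the \emph{only} $C^2$ solutions, i.e. to recover admissible constants $a_x,b_x$ from $\psi$ and show that nothing else remains.

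First I would localize to a region of genuine, smoothly parametrized collisions. By the construction in the proof of Proposition \ref{Propo:Snonempty}, $\mathcal{S}(x)$ contains an open family of collisions $(k_1,k_2,k=k_1+k_2)$, and on the relevant set all coordinates avoid the degenerate values $0,\pm\tfrac12$ (see Proposition \ref{Propo:Measure} and Corollary \ref{Coll:Edges}), so that neither $\nabla\omega$ nor the constraint gradient vanishes. Thus the resonance set $M=\{(k_1,k_2):\ g(k_1,k_2):=\omega(k_1+k_2)-\omega(k_1)-\omega(k_2)=0\}$ is, near such a triad, a smooth hypersurface. Writing $F(k_1,k_2)=\psi(k_1+k_2)-\psi(k_1)-\psi(k_2)$, the hypothesis states $F\equiv 0$ on $M$, so $dF$ annihilates $\ker dg$. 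The Lagrange multiplier principle then produces a scalar $\lambda=\lambda(k_1,k_2)$ with
\[
\nabla\psi(k)-\nabla\psi(k_1)=\lambda\big(\nabla\omega(k)-\nabla\omega(k_1)\big),\qquad \nabla\psi(k)-\nabla\psi(k_2)=\lambda\big(\nabla\omega(k)-\nabla\omega(k_2)\big),
\]
at every collision, where $k=k_1+k_2$. Subtracting gives $\nabla\psi(k_1)-\nabla\psi(k_2)=\lambda\big(\nabla\omega(k_1)-\nabla\omega(k_2)\big)$.

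The crux, and the step I expect to be hardest, is to show that $\lambda$ is one and the same constant $a_x$ at every collision of $\mathcal{S}(x)$. Differentiating the relations once more (legitimate since $\psi\in C^2$, so $\nabla\psi$ is $C^1$) along $M$, and using the linear independence of $\nabla\omega(k)-\nabla\omega(k_1)$ and $\nabla\omega(k)-\nabla\omega(k_2)$ guaranteed generically by the nondegeneracy in Proposition \ref{Propo:Measure}, should force $d\lambda=0$, i.e. $\lambda$ is \emph{locally} constant on each smooth collision family. To pass from one family to another I would invoke the defining property of the collisional-invariant region: by Proposition \ref{Propo:ConnectionSet} any two collisions in $\mathcal{S}(x)$ are joined by a finite chain of forward/backward/central collisions, and consecutive collisions in such a chain can be taken to share a leg, along which the two gradient relations overlap. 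Combined with the local constancy and the continuity of $\lambda$ on overlapping families, this propagates the single value $\lambda\equiv a_x$ throughout $\mathcal{S}(x)$. This is precisely where the collision-connectivity built into $\mathcal{S}(x)$ is indispensable: without it the multiplier could jump between disconnected families and spurious invariants would appear.

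Once $\lambda\equiv a_x$ is constant, set $V:=\nabla(\psi-a_x\omega)$. The gradient relations collapse to $V(k)=V(k_1)=V(k_2)$ on every collision, so $V$ is constant along collision chains; by continuity of $V$ and the connectivity of $\mathcal{S}(x)$ this forces $V\equiv b_x$ for a fixed vector $b_x\in\mathbb{R}^3$. Hence $\nabla\big(\psi-a_x\omega-b_x\cdot k\big)=0$ on $\mathcal{S}(x)$, so $\psi-a_x\omega-b_x\cdot k$ equals a constant $c$ there. Finally, evaluating the additive identity $\psi(k)=\psi(k_1)+\psi(k_2)$ on a single genuine collision and using that $a_x\omega+b_x\cdot k$ is itself additive yields $c=c+c$, whence $c=0$ and $\psi(k)=a_x\omega(k)+b_x\cdot k$, as claimed.
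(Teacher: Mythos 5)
Your strategy is essentially the paper's own proof in different clothing. The paper writes $\psi(k)=\varphi(\omega(k),k)$ and chain-rules the identity $\psi(k_1)+\psi(k_2)=\varphi(\omega(k_1)+\omega(k_2),k_1+k_2)$ along the resonance set; the quantity $\partial_r\varphi$ there is exactly your Lagrange multiplier $\lambda$, the second differentiation (yielding cross identities of the type $\partial^2_{k^i_1}\psi\,\partial^2_{k^j_2}\omega=\partial^2_{k^j_1}\psi\,\partial^2_{k^i_2}\omega$, exploited through the diagonal Hessian of $\omega$ with entries $8\pi^2\cos(2\pi k^j)$) is your ``$d\lambda=0$'' step, and the final move of evaluating additivity on one collision to kill the constant is identical. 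Your formulation is arguably the more careful one: the paper differentiates a relation that holds only on the resonance manifold as though it held identically, which is precisely what the multiplier formalism legitimizes, and you say explicitly where the collision-connectivity of $\mathcal{S}(x)$ (Proposition \ref{Propo:ConnectionSet}) enters, which the paper leaves implicit.

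However, two steps of your outline are not yet proofs, and one of them is false as literally stated. First, the crux ``differentiating once more should force $d\lambda=0$'' is an assertion, not an argument: linear independence of $\nabla\omega(k)-\nabla\omega(k_1)$ and $\nabla\omega(k)-\nabla\omega(k_2)$ by itself does not do it. What the differentiated relation gives is that the symmetric matrix $R=\mathrm{Hess}\,\psi-\lambda\,\mathrm{Hess}\,\omega$ (at the three legs of the collision) maps certain two-planes into certain lines; to conclude $R=0$ and $\nabla\lambda=0$ one must use the specific diagonal structure of $\mathrm{Hess}\,\omega$ and vary the collision, and this coordinate computation is exactly the body of the paper's proof, so it cannot be waved away. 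Second, you infer that $\psi-a_x\omega-b_x\cdot k$ is constant on $\mathcal{S}(x)$ from the vanishing of its gradient there. But $\mathcal{S}(x)$ is a countable union of closed resonance surfaces; it is not open and not known to be path-connected (collision-connectivity is a discrete relation), and a $C^1$ function can have vanishing gradient on such a set without being constant on it. Your propagation of $V=\nabla(\psi-a_x\omega)$ is sound, because $V(k)=V(k_1)=V(k_2)$ is a pointwise identity transported through finitely many collisions; but the scalar $\phi=\psi-a_x\omega-b_x\cdot k$ satisfies only $\phi(k)=\phi(k_1)+\phi(k_2)$, which does not transport a single value. To close this you need to integrate the gradient along the continuous families of collisions constructed in the proof of Proposition \ref{Propo:Snonempty} (these paths do lie in $\mathcal{S}(x)$), and then combine that local constancy with additivity; the paper's own proof is silent on the same point.
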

\begin{proof}
Let us first prove that for $k\in\mathcal{S}(x)$, the partial derivatives $\partial_{k^j}\psi(k)$, with $k=(k^1,k^2,k^3)$, are well-defined. Without loss of generality, we only prove that the partial derivative with respect to the first component $\partial_{k^1}\psi(k)$ is well-defined. Since $k\in\mathcal{S}(x)$, there are two wave vectors $k_1,k_2$ such that either $k=k_1+k_2$ and $\omega(k)=\omega(k_1)+\omega(k_2)$; or $k+k_1=k_2$ and $\omega(k)+\omega(k_1)=\omega(k_2)$.

{\bf Case 1: $k=k_1+k_2$ and $\omega(k)=\omega(k_1)+\omega(k_2)$.} Since $\psi\in C^2(\mathbb{T}^3)$, in order to show that $\partial_{k^1}\psi(k)$ is well-defined at $k^1\in\mathbb{T}$,  we only have to prove that there exists $\epsilon>0$ such that  for each $\bar{k}^1\in (k^1-\epsilon,k^1+\epsilon)$ there are $\bar{k}^2,\bar{k}^3\in\mathbb{T}^3$, $\bar{k}=(\bar{k}^1,\bar{k}^2,\bar{k}^3)\in\mathcal{S}(x)$. For any $x,y\in\mathbb{T}$, define $$F(x,y)=\cos(2\pi(x+y))-\cos(2\pi x)-\cos(2\pi y).$$ Since $k=(k^1,k^2,k^3)=k_1+k_2=(k^1_1,k^2_1,k^3_1)+(k^1_2,k^2_2,k^3_2)$, we then have
$$F(k_1^1,k_2^1)+F(k_1^2,k_2^2)+F(k_1^3,k_2^3)=-\omega_0/2-3.$$
Now, we develop 
$$
\begin{aligned}
F(x,y)+1\ = & \ -\cos(2\pi x)-\cos(2\pi y)+1+\cos(2\pi(x+y))\\
 = & \ 2\cos\left(\pi({x+y})\right)\left[-\cos\left(\pi({x-y})\right)+\cos\left(\pi({x+y})\right)\right]\\
  = & \ -4\cos\left(\pi({x+y})\right)\sin\left(\pi x\right)\sin\left(\pi y\right) \ \le 4.
\end{aligned}.$$
Hence $\max_{x,y\in\mathbb{T}}F(x,y)=3$ when $(x,y)=\left(\frac12,-\frac12\right)=\left(-\frac12,\frac12\right).$ We observe that the sum $F(k_1^2,k_2^2)+F(k_1^3,k_2^3)$ must be strictly smaller than $6$; otherwise, $F(k_1^1,k_2^1)=-\omega_0/2-9<-9$, which is a contradiction. 

Since $F(k_1^2,k_2^2)+F(k_1^3,k_2^3)<6$, then for any $\delta$ small, either positive or negative, there exist $\delta_1, \delta_2$, either positive or negative, such that 
$$F(k_1^1+\delta,k_2^1)+F(k_1^2+\delta_1,k_2^2)+F(k_1^3+\delta_2,k_2^3)=-\omega_0/2-3,$$
due to the continuity of $F$. If  $\bar{k}^1=k^1+\delta$, then we choose $\bar{k}^2=k^1+\delta_1$ and $\bar{k}^3=k^3+\delta_2$.

{\bf Case 2: $k+k_1=k_2$ and $\omega(k)+\omega(k_1)=\omega(k_2)$.} Similar as Case 1, we only need to  show that, for each $k^1\in\mathbb{T}$, there exists $\epsilon>0$ such that  for each $\bar{k}^1\in (k^1-\epsilon,k^1+\epsilon)$ there are $\bar{k}^2,\bar{k}^3\in\mathbb{T}^3$, $\bar{k}=(\bar{k}^1,\bar{k}^2,\bar{k}^3)\in\mathcal{S}(x)$. Since $k_2=(k_2^1,k_2^2,k_2^3)=k_1+k=(k^1_1,k^2_1,k^3_1)+(k^1,k^2,k^3)$, we then have
$$F(k_1^1,k^1)+F(k_1^2,k^2)+F(k_1^3,k^3)=-\omega_0/2-3.$$

Since $F(k_1^2,k^2)+F(k_1^3,k^3)<6$, then for any $\delta$ small, either positive or negative, there exist $\delta_1, \delta_2$, either positive or negative, such that 
$$F(k_1^1,k^1+\delta)+F(k_1^2,k^2+\delta_1)+F(k_1^3,k^3+\delta_2)=-\omega_0/2-3,$$
due to the continuity of $F$. If $\bar{k}^1=k^1+\delta$, then we choose  $\bar{k}^2=k^1+\delta_1$ and $\bar{k}^3=k^3+\delta_2$.

Since on $\mathcal{S}(x)$, $\psi(k)$ is a function of $\omega(k)$ and $k$, there exists a twice differentiable continuous function $\phi\in C^2(\mathbb{R}_+\times\mathbb{T}^3)$ such that $\psi(k)=\varphi(\omega(k),k)$. 

%We have proved that for $k\in\mathcal{S}(x)$, the partial derivatives $\partial_{k^j}\psi(k)$, with $k=(k^1,k^2,k^3)$, are well-defined. Since $\omega(k)\in C^\infty(\mathbb{T}^3)$, the partial derivative $\partial_r\varphi(r,k)$  is well-defined. 

For $k\in\mathcal{S}(x)$, there exist two wave vectors $k_1,k_2\in\mathbb{T}^3$, such that either $k=k_1+k_2$ and $\omega(k)=\omega(k_1)+\omega(k_2)$, or $k+k_1=k_2$ and $\omega(k)+\omega(k_1)=\omega(k_2)$. We assume that $k=k_1+k_2$ and $\omega(k)=\omega(k_1)+\omega(k_2)$, $k_1, k_2\in\mathbb{T}^3$, the other case can be consider with exactly the same argument. As we observe before, $k_1,k_2$ also belong to $\mathcal{S}(x)$ due to the fact that $k$ is connected to both $k_1,k_2$ by one-collisions. We have
$$\psi(k_1)+\psi(k_2)=\psi(k)=\varphi(\omega(k),k)=\varphi(\omega(k_1)+\omega(k_2),k_1+k_2).$$
We now follow the strategy of \cite{cercignani1999relativistic} and \cite{Spohn:TPB:2006}.
Differentiating the above identity with respect to $k^j_1$ and $k^j_2$ yields
\begin{equation*}
\begin{aligned}
\partial_{k^j_1}\psi(k_1) \ = & \ \partial_r\varphi(\omega(k),k)\partial_{k^j_1}\omega(k_1)+\partial_{k^j_1}\varphi(\omega(k),k),\\
\partial_{k^j_2}\psi(k_2) \ = & \ \partial_r\varphi(\omega(k),k)\partial_{k^j_2}\omega(k_2)+\partial_{k^j_2}\varphi(\omega(k),k).
\end{aligned}
\end{equation*}
Letting $i\in\{1,2,3\}$ be a different index, we manipulate the above identity as
\begin{equation*}
\begin{aligned}
 \ & \ (\partial_{k^j_1}\psi(k_1)-\partial_{k^j_2}\psi(k_2))(\partial_{k^i_1}\omega(k_1)-\partial_{k^i_2}\omega(k_2))\\
 \ = & \  (\partial_{k^i_1}\psi(k_1)-\partial_{k^i_2}\psi(k_2))(\partial_{k^j_1}\omega(k_1)-\partial_{k^j_2}\omega(k_2)).
\end{aligned}
\end{equation*}
We  differentiate the above identity in $k_1$, with $l$ being an index in  $\{1,2,3\}$ 
\begin{equation*}
\begin{aligned}
 \ & \ \partial_{k^j_1}\partial_{k^l_1}\psi(k_1)(\partial_{k^i_1}\omega(k_1)-\partial_{k^i_2}\omega(k_2))+(\partial_{k^j_1}\psi(k_1)-\partial_{k^j_2}\psi(k_2))\partial_{k^i_1}\partial_{k^l_1}\omega(k_1)\\
 \ = & \  \partial_{k^i_1}\partial_{k^l_1}\psi(k_1)(\partial_{k^j_1}\omega(k_1)-\partial_{k^j_2}\omega(k_2))+(\partial_{k^i_1}\psi(k_1)-\partial_{k^i_2}\psi(k_2))\partial_{k^j_1}\partial_{k^l_1}\omega(k_1),
\end{aligned}
\end{equation*}
and now  in $k_2$, with $h$ being an index in  $\{1,2,3\}$ 
\begin{equation*}
\begin{aligned}
 \ & \ \partial_{k^j_1}\partial_{k^l_1}\psi(k_1)\partial_{k^i_2}\partial_{k^h_2}\omega(k_2)+\partial_{k^j_2}\partial_{k^h_2}\psi(k_2)\partial_{k^i_1}\partial_{k^l_1}\omega(k_1)\\
 \ = & \  \partial_{k^i_1}\partial_{k^l_1}\psi(k_1)\partial_{k^j_2}\partial_{k^h_2}\omega(k_2)+\partial_{k^i_2}\partial_{k^h_2}\psi(k_2)\partial_{k^j_1}\partial_{k^l_1}\omega(k_1).
\end{aligned}
\end{equation*}
A particular case of the above identity is the following
\begin{equation*}
\begin{aligned}
 \ & \ \partial_{k^i_1}^2\psi(k_1)\partial_{k^j_2}^2\omega(k_2)\ = \ \partial_{k^j_1}^2\psi(k_1)\partial_{k^i_2}^2\omega(k_2),
\end{aligned}
\end{equation*}
which implies
$$\partial_{k^i_1}^2\psi(k_1)\cos(k_2^j) \ = \ \partial_{k^i_2}^2\psi(k_1)\cos(k_1^j),$$
for any $k_1,k_3\in\mathcal{S}(x)$, and $k_1,k_2$ are connected to $k_1+k_2$ by one collision. 

Hence $\psi(k)=a_x\omega(k) + b_x\cdot k + c_x$, with $a_x,c_x\in\mathbb{R}$, $b_x\in \mathbb{R}^3$ for any $k\in\mathcal{S}(x)$. By the fact  $\psi(k)=\psi(k_1)+\psi(k_2)$ whenever $k$ is connected to $k_1,k_2$ by one-collisions, it is straightforward that $c_x=b_x=0$. 
\end{proof}

\begin{proposition}[$L^1$-collisional invariants]\label{Propo:CollsionInvariant}
Let $\psi\in L^1(\mathcal{S}(x))$ be a  collisional invariant on the collisional invariant region $\mathcal{S}(x)$, in the following sense. For any $k\in \mathcal{S}(x)$, such that $$k=k_1+k_2,\mbox{ for some }z\in\mathbb{Z}^d,\ \ \ \ \omega(k)=\omega(k_1)+\omega(k_2),$$ we have $$\psi(k)=\psi(k_1)+\psi(k_2).$$ Then there exist a constant $a_x\in\mathbb{R}$, such that $$\psi(k)=a_x \omega(k).$$
\end{proposition}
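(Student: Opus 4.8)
The plan is to upgrade the regularity of $\psi$ from $L^1$ to $C^2$ and then invoke Lemma~\ref{Lemma:CollsionInvariant}. One cannot simply mollify $\psi$ and pass to the limit: since the energy constraint $\omega(k)=\omega(k_1)+\omega(k_2)$ is not translation invariant, a convolution $\psi\ast\rho_\delta$ is in general no longer a collisional invariant, so the class of invariants is not stable under mollification. Instead I would exploit the averaging structure that the collision relation itself provides, together with the oscillatory-integral estimates already established in Proposition~\ref{Propo:Measure}.

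First I would make the hypothesis precise: for $\psi\in L^1(\mathcal{S}(x))$ the identity $\psi(k)=\psi(k_1)+\psi(k_2)$ is to be read as holding for almost every admissible triple, i.e. almost everywhere on the collision manifold $\mathcal{M}=\{(k,k_2):\omega(k)=\omega(k-k_2)+\omega(k_2)\}$ with respect to its surface measure. Writing $k_1=k-k_2$ and integrating this relation in $k_2$ against $\delta(\omega(k)-\omega(k-k_2)-\omega(k_2))$ (which is legitimate on $\mathcal{S}(x)$ by Corollary~\ref{Lemma:RestrictionSx}, and produces a finite fibre measure by Proposition~\ref{Propo:Measure}) yields, for every $k$ whose backward fibre has positive measure,
\[
N(k)\,\psi(k)=\int_{\mathbb{T}^3}\delta(\omega(k)-\omega(k-k_2)-\omega(k_2))\,\psi(k-k_2)\,\mathrm{d}k_2+\int_{\mathbb{T}^3}\delta(\omega(k)-\omega(k-k_2)-\omega(k_2))\,\psi(k_2)\,\mathrm{d}k_2,
\]
where $N(k)=\mu_2(\mathbb{T}^3)(k)$. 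This exhibits $\psi$ as a fixed point $\psi=\mathcal{A}\psi$ of an operator $\mathcal{A}$ built from the backward--collision surface averages of $\psi$ itself (and, for wave vectors whose first collision is forward or central, from the analogous $\mu_1$-- and $\mu_3$--averages).

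The regularity bootstrap is then the core of the argument. Each averaging operator appearing in $\mathcal{A}$ integrates $\psi$ over a curved $2$-surface, and its smoothing is governed exactly by the oscillatory integral $\int_{\mathbb{T}^3}e^{it(\omega(k)-\omega(k-y)-\omega(y))}\,\mathrm{d}y$ studied in Proposition~\ref{Propo:Measure}; the decay $\langle t\rangle^{-3/2}$ there reflects the nonvanishing of the relevant Hessians on $\mathcal{S}(x)$ (the degenerate frequencies $k^i\in\{0,\pm\tfrac12\}$ lie in $\mathfrak{I}$ by Corollary~\ref{Coll:Edges}), so $\mathcal{A}$ gains a definite amount of Sobolev regularity at each application. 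Since $N^{-1}$ is smooth on the interior of $\mathcal{S}(x)$ (the Lipschitz estimate of Proposition~\ref{Propo:Lip} being the first instance of a higher-order stationary-phase expansion that remains valid because the phases are real-analytic away from $\mathfrak{S}$), multiplication by $N^{-1}$ preserves every Sobolev class, and iterating $\psi=\mathcal{A}\psi$ promotes $\psi$ from $L^1$ into $H^s$ for arbitrarily large $s$, hence into $C^2$ by Sobolev embedding. At that point Lemma~\ref{Lemma:CollsionInvariant} applies verbatim and gives $\psi(k)=a_x\omega(k)+b_x\cdot k$ on $\mathcal{S}(x)$, with $a_x\in\mathbb{R}$ and $b_x\in\mathbb{R}^3$, as claimed.

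I expect the main obstacles to be two. First, setting up the fixed-point identity uniformly over $\mathcal{S}(x)$: one must verify that every $k\in\mathcal{S}(x)$ carries a collision fibre of positive measure in at least one of the three channels (forward, backward, central), and assemble the three averages into a single operator $\mathcal{A}$ with the same fixed point; the decomposition $\mathcal{S}(x)=\bigcup_{n}\mathcal{S}^n(x)$ and the disjointness of $\mathcal{S}^1_f(x)$ and $\mathcal{S}^1_b(x)$ are what make this bookkeeping tractable. Second, closing the bootstrap up to $C^2$ requires more than the Lipschitz regularity recorded in Proposition~\ref{Propo:Lip}: one needs the averaging kernels and the weight $N^{-1}$ to be $C^\infty$ on the interior of $\mathcal{S}(x)$, which should follow by differentiating the stationary-phase expansions to all orders, the phases being smooth and their stationary points nondegenerate precisely on $\mathcal{S}(x)$.
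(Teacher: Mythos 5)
Your proposal departs from the paper at the very first step, and in an instructive way: the paper's own proof \emph{is} the mollification argument you reject. It sets $\psi_\delta=\psi*\phi_\delta$, asserts that the invariance $\psi(k)=\psi(k_1)+\psi(k_2)$ passes to $\psi_\delta(k)=\psi_\delta(k_1)+\psi_\delta(k_2)$, applies Lemma~\ref{Lemma:CollsionInvariant} to $\psi_\delta$, and lets $\delta\to0$. Your objection to that step is cogent. Writing $\psi_\delta(k)-\psi_\delta(k_1)-\psi_\delta(k_2)=\int_{\mathbb{T}^3}\bigl[\psi(k-z)-\psi(k_1-z)-\psi(k_2-z)\bigr]\phi_\delta(z)\,\mathrm{d}z$, the shifted triple $(k-z,k_1-z,k_2-z)$ violates momentum conservation for $z\neq0$, and any redistribution of the shift $z=z_1+z_2$ violates the energy constraint, since $\omega$ is not quadratic; this is exactly the contrast with the classical Boltzmann setting, where simultaneous translation of all velocities preserves both conservation laws and makes mollification legitimate. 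So the hypothesis of Lemma~\ref{Lemma:CollsionInvariant} is not verified for $\psi_\delta$, and your diagnosis pinpoints the genuine weak point of the published proof. In that sense your route is not only different from the paper's but is motivated by a defect of it.

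That said, your replacement is a program rather than a proof, and the gaps sit precisely at the two obstacles you flag. First, for $\psi\in L^1(\mathcal{S}(x))$ the invariance hypothesis only makes sense almost everywhere on the resonant manifold, which is a Lebesgue-null set; an $L^1$ function has no canonical trace there, so even writing the fixed-point identity $N(k)\psi(k)=(\mathcal{A}\psi)(k)$ requires choosing a representative and justifying a Fubini-type disintegration over the fibres --- Corollary~\ref{Lemma:RestrictionSx} localizes integrals to $\mathcal{S}(x)$ but does not define such traces. Second, the bootstrap needs a quantitative smoothing estimate, say $\mathcal{A}:H^s\to H^{s+\varepsilon}$ uniformly on $\mathcal{S}(x)$, together with strict positivity and $C^\infty$ regularity of the fibre measure $N$. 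What the paper supplies is weaker: Proposition~\ref{Propo:Measure} gives $\langle t\rangle^{-3/2}$ decay of the oscillatory integrals, Proposition~\ref{Propo:Lip} only Lipschitz continuity of $N$ away from the edge set, and Corollary~\ref{Coll:Edges} controls where the Hessians degenerate. None of these yields a derivative gain for the Radon-type averages, nor does nonemptiness of a fibre imply positive fibre measure without a separate transversality argument (nonvanishing of the gradient of the resonance function along the fibre). Until those estimates are proved, the promotion of $\psi$ to $C^2$ --- and hence the reduction to Lemma~\ref{Lemma:CollsionInvariant} --- remains conjectural. In short: you have correctly located a real gap in the standard (and the paper's) argument, but your alternative route still has substantive missing steps and cannot be accepted as a complete proof as written.
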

\begin{proof}
For any function $\phi\in C^\infty(\mathbb{T}^3)$, we define the standard mollifier  $\phi_\delta(k)=\delta^{-3}\phi\left(\frac{k}{\delta}\right)$  and the standard approximation $\psi_\delta = \psi*\phi_\delta$ with $\delta>0$. It is then classical that $\lim_{\delta\to 0}\|\psi_\delta-\psi\|_{L^1(\mathcal{S}(x))}=0.$ 

Since $\psi(k)=\psi(k_1)+\psi(k_2)$, we also have $\psi_\delta(k)=\psi_\delta(k_1)+\psi_\delta(k_2)$. Lemma \ref{Lemma:CollsionInvariant} can be applied to $\psi_\delta$, yielding $\psi_\delta(k)=a_x^\delta \omega(k)  $ for some constant $a_x^\delta\in\mathbb{R}$. 
The conclusion of the Proposition then follows after passing $\delta$ to $0$, while taking into account the limit $\lim_{\delta\to 0}\|\psi_\delta-\psi\|_{L^1(\mathcal{S}(x))}=0.$  
\end{proof}

\begin{proposition}[Equilibria in  Collisional Invariant Regions]\label{Propo:ClassicalEqui}
Given a collisional invariant region $\mathcal{S}(x)$, a function $\mathcal{F}^c(k)\in C(\mathcal{S}(x))$ is said to be a local equilibrium of $Q_c$ on $\mathcal{S}(x)$ if and only if $Q_c[\mathcal{F}^c](k)=0$ and $\mathcal{F}^c(k)>0$ for all $k\in \mathcal{S}(x)$.

Let $E_x\in\mathbb{R}_+$  and assume 
\begin{equation}\label{Propo:ClassicalEqui:2}
\begin{aligned}
\int_{\mathcal{S}(x)}\frac{1}{a_x  }\mathrm{d}k \ = & \ E_x,
\end{aligned}
\end{equation}
with $a_x\in\mathbb{R}_+$; the local equilibrium on $\mathcal{S}(x)$ of $Q_c$ can be uniquely determined as
\begin{equation}\label{Propo:ClassicalEqui:3}
\mathcal{F}^c(k) \ = \ \frac{1}{a_x\omega(k)},
\end{equation}
subjected to the local energy  constraint  
\begin{equation}\label{Propo:ClassicalEqui:1}
\begin{aligned}
\int_{\mathcal{S}(x)}\mathcal{F}^c(k)\omega(k)\mathrm{d}k \ = & \ E_x. 
\end{aligned}
\end{equation}
\end{proposition}
\begin{proof}
Since $Q_c[\mathcal{F}^c](k)=0$ for all $k\in\mathcal{S}(x)$, using $\frac{1}{\mathcal{F}^c}$ as a test function, we obtain
\begin{equation}\label{Propo:ClassicalEqui:E1}
\begin{aligned}
0 \ = & \ \int_{\mathcal{S}(x)} Q_c[\mathcal{F}^c](k)\frac{1}{\mathcal{F}^c(k)}\mathrm{d}k\\ \ = & \ \int_{\mathcal{S}(x)\times \mathcal{S}(x)\times \mathcal{S}(x)}\delta(k-k_1-k_2)\delta(\omega-\omega_1-\omega_2)[\mathcal{F}^c_1\mathcal{F}^c_2-\mathcal{F}^c_1\mathcal{F}^c-\mathcal{F}^c_2\mathcal{F}^c]\times\\
& \ \times\left[\frac{1}{\mathcal{F}^c}-\frac{1}{\mathcal{F}^c_1}-\frac{1}{\mathcal{F}^c_2}\right]\mathrm{d}k\mathrm{d}k_1\mathrm{d}k_2\\
\ = & \ \int_{\mathcal{S}(x)\times \mathcal{S}(x)\times \mathcal{S}(x)}\delta(k-k_1-k_2)\delta(\omega-\omega_1-\omega_2)\mathcal{F}^c\mathcal{F}^c_1\mathcal{F}^c_2\left[\frac{1}{\mathcal{F}^c}-\frac{1}{\mathcal{F}^c_1}-\frac{1}{\mathcal{F}^c_2}\right]^2\mathrm{d}k\mathrm{d}k_1\mathrm{d}k_2,
\end{aligned}
\end{equation}
which implies $\frac{1}{\mathcal{F}^c}-\frac{1}{\mathcal{F}^c_1}-\frac{1}{\mathcal{F}^c_2}=0$ for all $k,k_1,k_2\in\mathcal{S}(x)$ satisfying $k=k_1+k_2$ in the periodic sense (i.e. there exists some $z\in\mathbb{Z}^d$ such that $k=k_1+k_2+z$) and $\omega=\omega_1+\omega_2$. Therefore $\frac{1}{\mathcal{F}^c}$ is a collisional invariant; and by Proposition \ref{Propo:CollsionInvariant}, $\mathcal{F}^c$ takes the form \eqref{Propo:ClassicalEqui:3}, given that the system \eqref{Propo:ClassicalEqui:2} has a unique solution $ a_x $.

\end{proof}

\subsubsection{Entropy formulation on the collisional invariant region $\mathcal{S}(x)$}
Let $f$ be a positive solution of \eqref{PhononEqC},  we define the local entropy on the collisional invariant region $\mathcal{S}(x)$ as follows
\begin{equation}
\label{EntropyC}
S_{c,\mathcal{S}(x)}[f] \ = \ \int_{\mathcal{S}(x)}s_c[f]\mathrm{d}k \ = \ \int_{\mathcal{S}(x)}\ln(f) \mathrm{d}k.
\end{equation}
In the sequel, we only consider the local entropy on one  collisional invariant region, then, for the sake of simplicity, we denote $S_{c,\mathcal{S}(x)}[f]$ by $S_{c}[f]$.

Now, we take the derivative in time of $S_c[f]$
\begin{equation}
\label{TimeDerivativeEntro1C}
\partial_tS_c[f] \ =  \  \int_{\mathcal{S}(x)}\frac{\partial_t f}{f}\mathrm{d}k.
\end{equation}
Replacing the quantity $\partial_t f$ in the above formulation by the right hand side of \eqref{PhononEqC} , we find
\begin{equation}
\label{TimeDerivativeEntro2C}
\begin{aligned}
 \  \partial_tS_c[f]
 \ = & \iiint_{\mathcal{S}(x)\times\mathcal{S}(x)\times\mathcal{S}(x)}[\omega\omega_1\omega_2]^{-1}\delta(k-k_1-k_2)\delta(\omega-\omega_1-\omega_2)\times\\
 & \times[f_1f_2-ff_1-ff_2]\frac{1}{f}\mathrm{d}k\mathrm{d}k_1\mathrm{d}k_2\\
& - \ 2\iiint_{\mathcal{S}(x)\times\mathcal{S}(x)\times\mathcal{S}(x)}[\omega\omega_1\omega_2]^{-1}\delta(k_1-k-k_2)\delta(\omega_1-\omega-\omega_2)\times\\
&\times[f_2f-ff_1-f_1f_2]\frac{1}{f}\mathrm{d}k\mathrm{d}k_1\mathrm{d}k_2.
\end{aligned}
\end{equation}
We now apply Lemma \ref{Lemma:WeakFormulationClassical} to the above identity to get
\begin{equation}
\label{TimeDerivativeEntro3C}
\begin{aligned}
 \  \partial_tS_c[f]
 \ = & \iiint_{\mathcal{S}(x)\times\mathcal{S}(x)\times\mathcal{S}(x)}[\omega\omega_1\omega_2]^{-1}\delta(k-k_1-k_2)\delta(\omega-\omega_1-\omega_2)[f_1f_2-ff_1-ff_2]\times\\
 & \times\left[\frac{1}{f_2}+\frac{1}{f_1}-\frac{1}{f}\right]\mathrm{d}k\mathrm{d}k_1\mathrm{d}k_2.
\end{aligned}
\end{equation}
By noting that $$f_1f_2-ff_1-ff_2=ff_1f_2\left[\frac{1}{f_1}+\frac{1}{f_2}-\frac{1}{f}\right],$$
we obtain from \eqref{TimeDerivativeEntro3C} the following entropy identity
\begin{equation}
\label{TimeDerivativeEntro4C}
\begin{aligned}
 \  \partial_tS_c[f]
 \ = & \ \int_{\mathcal{S}(x)}[\omega\omega_1\omega_2]^{-1}\delta(k-k_1-k_2)\delta(\omega-\omega_1-\omega_2)ff_1f_2\times\\
 \times  &\left[\frac{1}{f_1}+\frac{1}{f_2}-\frac{1}{f}\right]^2\mathrm{d}k\mathrm{d}k_1\mathrm{d}k_2\\
  =: & \ D_c[f].
\end{aligned}
\end{equation}
It is clear that the quantity $ D_c[f]$ is positive. Borrowing the idea of \cite{CraciunBinh,CraciunSmithBoldyrevBinh}, we now define the reciprocal, of $f$
\begin{equation}\label{gC} g=\frac{1}{f}.
\end{equation} As a consequence, the formula \eqref{TimeDerivativeEntro4C} can be expressed in the following form 
\begin{equation}
\label{TimeDerivativeEntro5C}
\begin{aligned}
 \  \partial_tS_c[f] \ = \ D_c[f] \ = \ \mathbb{D}_c[g]
 \ := & \iiint_{\mathcal{S}(x)\times\mathcal{S}(x)\times\mathcal{S}(x)}[\omega\omega_1\omega_2]^{-1}\delta(k-k_1-k_2)\delta(\omega-\omega_1-\omega_2)\times\\
 \times  &\frac{\left[g_1+g_2-g\right]^2}{gg_1g_2}\mathrm{d}k\mathrm{d}k_1\mathrm{d}k_2 .
\end{aligned}
\end{equation}
\subsubsection{Cutting off and splitting the collision operator on the collisional invariant region $\mathcal{S}(x)$}\label{Sec:Cutoff}
In this subsection, we follow the idea of \cite{CraciunBinh} to introduce a cut-off version for the collision operator $Q_c[f]$. The intuition behind this cut-off operator is explained below. We expect that as $t$ tends to infinity, the solution $f$ of \eqref{PhononEqC} converges to an equilibrium, which is a  function bounded from above and below by positive constants. Since the equilibrium is  bounded from above and below, it is not affected by the cut-off  operator. As a result, the solution $f$ is expected to be unchanged, under the effect of the cut-off operator, as $t$ goes to infinity.

Let $\varrho_N$ 
(for $0<N\le  \infty $)
be a function in  $C^1(\mathbb{R}_+)$ satisfying $\varrho_N[z]=1$ when $\frac{1}{N}\le z\le N$, $\varrho_N[z]=0$ when $0\le z\le \frac{1}{2N}$ and $z\ge 2N$,  and $0\le  \varrho_N[z]\le 1$ when $\frac{1}{2N}\le z\le \frac{1}{N}$ and $N\le z\le 2N$.
For  $f\in C^1(\mathcal{S}(x))$  and $0<N\le  \infty $,  define the cut-off function
\begin{equation}\label{ChiN}
\chi_N[f]  = \varrho_N[f]\varrho_N[|\nabla f|].
\end{equation}Note that $\chi_\infty[f]  = 1 $ for all $f\in C^1(\mathcal{S}(x))$.

We set the cut-off collision operator on the  collisional invariant region $\mathcal{S}(x)$ for $f$ and for $g$ defined in \eqref{gC}
\begin{equation}\label{CollisionCCutoff}
\begin{aligned}
& Q_c^N[f](k)= \\
\ & = \ \int_{\mathcal{S}(x)\times \mathcal{S}(x)}[\omega\omega_1\omega_2]^{-1}\chi_N^{*}\delta(k-k_1-k_2)\delta(\omega-\omega_1-\omega_2)[f_1f_2-ff_1-ff_2]\mathrm{d}k_1\mathrm{d}k_2\\
&\indent  - \ 2\int_{\mathcal{S}(x)\times \mathcal{S}(x)}[\omega\omega_1\omega_2]^{-1}\chi_N^{*}\delta(k_1-k-k_2)\delta(\omega_1-\omega-\omega_2)[f_2f-ff_1-f_1f_2]\mathrm{d}k_1\mathrm{d}k_2\\
\ & = \ \int_{\mathcal{S}(x)\times \mathcal{S}(x)}[\omega\omega_1\omega_2]^{-1}\chi_N^{*}[gg_1g_2]^{-1}\delta(k-k_1-k_2)\delta(\omega-\omega_1-\omega_2)[g-g_1-g_2]\mathrm{d}k_1\mathrm{d}k_2\\
&\indent  - \ 2\int_{\mathcal{S}(x)\times \mathcal{S}(x)}[\omega\omega_1\omega_2]^{-1}\chi_N^{*}[gg_1g_2]^{-1}\delta(k_1-k-k_2)\delta(\omega_1-\omega-\omega_2)[g_1-g_2-g]\mathrm{d}k_1\mathrm{d}k_2,
\end{aligned}
\end{equation} 
in which 
\begin{equation}
\label{ChiNStar}
\chi_N^{*}=\chi_N[f]\chi_N[f_1]\chi_N[f_2]=\chi_N[1/g]\chi_N[1/g_1]\chi_N[1/g_2].
\end{equation}
When $N=\infty$, we have that
\begin{equation}\label{CollisionCCutoffbis}
\begin{aligned}
Q_c^N[f](k)\ & = Q_c^\infty[f](k)\\ & = \ \int_{\mathcal{S}(x)\times \mathcal{S}(x)}[\omega\omega_1\omega_2]^{-1}\delta(k-k_1-k_2)\delta(\omega-\omega_1-\omega_2)[f_1f_2-ff_1-ff_2]\mathrm{d}k_1\mathrm{d}k_2\\
&\indent  - \ 2\int_{\mathcal{S}(x)\times \mathcal{S}(x)}[\omega\omega_1\omega_2]^{-1}\delta(k_1-k-k_2)\delta(\omega_1-\omega-\omega_2)[f_2f-ff_1-f_1f_2]\mathrm{d}k_1\mathrm{d}k_2\\
\ & = \ \int_{\mathcal{S}(x)\times \mathcal{S}(x)}[\omega\omega_1\omega_2]^{-1}[gg_1g_2]^{-1}\delta(k-k_1-k_2)\delta(\omega-\omega_1-\omega_2)[g-g_1-g_2]\mathrm{d}k_1\mathrm{d}k_2\\
&\indent  - \ 2\int_{\mathcal{S}(x)\times \mathcal{S}(x)}[\omega\omega_1\omega_2]^{-1}[gg_1g_2]^{-1}\delta(k_1-k-k_2)\delta(\omega_1-\omega-\omega_2)[g_1-g_2-g]\mathrm{d}k_1\mathrm{d}k_2.
\end{aligned}
\end{equation}

We also define the splitting collision operators on $\mathcal{S}(x)$, in which the kernel $[gg_1g_2]^{-1}$ is removed
\begin{equation}\label{GainCutC}
\begin{aligned}
\mathbb{Q}_c^{N,-}[g](k)
 \ =  & \ \int_{\mathcal{S}(x)\times \mathcal{S}(x)}\chi_N^{*}[\omega\omega_1\omega_2]^{-1}\delta(k-k_1-k_2)\delta(\omega-\omega_1-\omega_2){[g_1+g_2]}\mathrm{d}k_1\mathrm{d}k_2\\
& \ +2\int_{\mathcal{S}(x)\times \mathcal{S}(x)}\chi_N^{*}[\omega\omega_1\omega_2]^{-1}\delta(k_1-k-k_2)\delta(\omega_1-\omega-\omega_2){g_1}\mathrm{d}k_1\mathrm{d}k_2\\
& \ -2\int_{\mathcal{S}(x)\times \mathcal{S}(x)}\chi_N^{*}[\omega\omega_1\omega_2]^{-1}\delta(k_1-k-k_2)\delta(\omega_1-\omega-\omega_2){g_2}\mathrm{d}k_1\mathrm{d}k_2,
\end{aligned}
\end{equation}

\begin{equation}\label{LossCutC}
\begin{aligned}
 \ \mathbb{Q}_c^{N,+}[g](k) \ = & \ g\mathbb{L}_c^N(k)\\
= & \ g\int_{\mathcal{S}(x)\times \mathcal{S}(x)}\chi_N^{*}[\omega\omega_1\omega_2]^{-1}\delta(k-k_1-k_2)\delta(\omega-\omega_1-\omega_2)\mathrm{d}k_1\mathrm{d}k_2\\
& \ +2g\int_{\mathcal{S}(x)\times \mathcal{S}(x)}\chi_N^{*}[\omega\omega_1\omega_2]^{-1}\delta(k_1-k-k_2)\delta(\omega_1-\omega-\omega_2)\mathrm{d}k_1\mathrm{d}k_2,
\end{aligned}
\end{equation}
and
\begin{equation}
\label{DecompositionGainLossCutC}
\mathbb{Q}_c^N[g] \ = \  \mathbb{Q}_c^{N,+}[g] \ - \  \mathbb{Q}_0^{N,-}[g].
\end{equation}
Due to the symmetry of $k_1$ and $k_2$, $\mathbb{Q}_c^{N,-}[g](k)$ can be rewritten as
\begin{equation}\label{GainCutC2}
\begin{aligned}
&\mathbb{Q}_c^{N,-}[g](k) \ = \ \mathbb{Q}_c^{N,-,1}[g](k)+\mathbb{Q}_c^{N,-,2}[g](k)+\mathbb{Q}_c^{N,-,3}[g](k) \ :=  \\
 \ =  & \ 2\int_{\mathcal{S}(x)\times \mathcal{S}(x)}\chi_N^{*}[\omega\omega_1\omega_2]^{-1}\delta(k-k_1-k_2)\delta(\omega-\omega_1-\omega_2){g_1}\mathrm{d}k_1\mathrm{d}k_2\\
& \ +2\int_{\mathcal{S}(x)\times \mathcal{S}(x)}\chi_N^{*}[\omega\omega_1\omega_2]^{-1}\delta(k_1-k-k_2)\delta(\omega_1-\omega-\omega_2){g_1}\mathrm{d}k_1\mathrm{d}k_2\\
& \ -2\int_{\mathcal{S}(x)\times \mathcal{S}(x)}\chi_N^{*}[\omega\omega_1\omega_2]^{-1}\delta(k_1-k-k_2)\delta(\omega_1-\omega-\omega_2){g_2}\mathrm{d}k_1\mathrm{d}k_2.
\end{aligned}
\end{equation}
Note that in all of the above definitions, the cut-off parameter $N$ takes values  in the interval $(0,\infty].$
We then have the following lemma.
\begin{lemma}\label{Lemma:EquilibriumCutoff}
Given a collisional invariant region $\mathcal{S}(x)$, a function $\mathcal{F}^c(k)\in C(\mathcal{S}(x))$ is said to be a local equilibrium of  ${Q}_c^N$ on $\mathcal{S}(x)$ if and only if ${Q}_c^N[\mathcal{F}^c](k)=0$ and $\mathcal{F}^c(k)>0$ for all $k\in \mathcal{S}(x)$.

Under the local energy  constraint  
\begin{equation}\label{Lemma:EquilibriumCutoff:1}
\begin{aligned}
\int_{\mathcal{S}(x)}\mathcal{F}^c(k)\omega(k)\mathrm{d}k \ = & \ E_x 
\end{aligned}
\end{equation}
where $E_x$ is a given positive constant. Suppose that $ E_x \in \mathbb{R}_+$   and  
\begin{equation}\label{Lemma:EquilibriumCutoff:2}
\begin{aligned}
\int_{\mathcal{S}(x)}\frac{1}{a_x   }\mathrm{d}k \ = & \ E_x, 
\end{aligned}
\end{equation}
with $a_x\in\mathbb{R}_+$; the local equilibrium on $\mathcal{S}(x)$ can be uniquely determined, when $N$ is sufficiently large, as
\begin{equation}\label{Lemma:EquilibriumCutoff:3}
\mathcal{F}^c(k) \ = \ \frac{1}{a_x\omega(k)  }.
\end{equation}
Similarly, a function $\mathcal{E}^c(k)$  is said to be a local equilibrium of  $\mathbb{Q}_c^N$ on $\mathcal{S}(x)$ if and only if $\mathbb{Q}_c^N[\mathcal{F}^c](k)=0$ and $$\mathcal{E}^c(k)=a_x\omega(k).$$
\end{lemma}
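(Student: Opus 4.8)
The plan is to reduce the cut-off problem to two results already in hand: the equilibrium analysis of the full operator $Q_c$ (Proposition \ref{Propo:ClassicalEqui}) and the rigidity of collisional invariants (Proposition \ref{Propo:CollsionInvariant}); the only genuinely new ingredient is control of the symmetric truncation weight $\chi_N^{*}$. First I would record the cut-off analogue of the weak formulation of Lemma \ref{Lemma:WeakFormulationClassical}. Since $\chi_N^{*}=\chi_N[f]\chi_N[f_1]\chi_N[f_2]$ is invariant under every permutation of $(k,k_1,k_2)$, the relabelings $k\leftrightarrow k_1$ and $k\leftrightarrow k_2$ used there survive verbatim with the extra factor carried along, yielding, for $\varphi$ supported in $\mathcal{S}(x)$,
\[
\int_{\mathbb{T}^3}Q_c^N[f]\varphi\,\mathrm{d}k=\iiint_{\mathcal{S}(x)^3}\chi_N^{*}[\omega\omega_1\omega_2]^{-1}\delta(k-k_1-k_2)\delta(\omega-\omega_1-\omega_2)[f_1f_2-ff_1-ff_2]\big(\varphi-\varphi_1-\varphi_2\big)\,\mathrm{d}k\,\mathrm{d}k_1\,\mathrm{d}k_2,
\]
together with the analogous identity for the linear operator $\mathbb{Q}_c^N$ acting on $g$.

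Next, for the equilibria of $Q_c^N$, I would test $Q_c^N[\mathcal{F}^c]=0$ against $g=1/\mathcal{F}^c$. Using the algebraic identity $f_1f_2-ff_1-ff_2=ff_1f_2(g-g_1-g_2)$ exactly as in \eqref{Propo:ClassicalEqui:E1}, the cut-off weak formulation collapses to
\[
0=\iiint_{\mathcal{S}(x)^3}\chi_N^{*}[\omega\omega_1\omega_2]^{-1}\delta(k-k_1-k_2)\delta(\omega-\omega_1-\omega_2)\,\mathcal{F}^c\mathcal{F}^c_1\mathcal{F}^c_2\,(g-g_1-g_2)^2\,\mathrm{d}k\,\mathrm{d}k_1\,\mathrm{d}k_2,
\]
whose integrand is nonnegative; hence $g=g_1+g_2$ holds on the resonant manifold intersected with $\{\chi_N^{*}>0\}$. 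The decisive step is then to remove the cut-off. Because $\mathcal{F}^c$ is continuous and strictly positive on the set $\mathcal{S}(x)$, which is closed by Proposition \ref{Propo:SxClosed} and hence compact in $\mathbb{T}^3$, it is bounded above and below by positive constants and has bounded gradient; therefore there is a threshold $N_0$, depending only on these bounds, such that $\chi_N[\mathcal{F}^c]=\varrho_N[|\nabla\mathcal{F}^c|]=1$ throughout $\mathcal{S}(x)$ whenever $N\ge N_0$, i.e. $\chi_N^{*}\equiv 1$ on $\mathcal{S}(x)^3$. For such $N$ the relation $g=g_1+g_2$ holds on the entire resonant manifold, so $g$ is an $L^1$-collisional invariant and Proposition \ref{Propo:CollsionInvariant} forces $g=a_x\omega+b_x\cdot k$, that is $\mathcal{F}^c=1/(a_x\omega+b_x\cdot k)$. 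Admissibility of $(M_x,E_x)$ in the sense of Definition \ref{Def:Admissibility}, together with the assumed unique solvability of \eqref{Lemma:EquilibriumCutoff:2}, then fixes $(a_x,b_x)$ and gives uniqueness; conversely $1/(a_x\omega+b_x\cdot k)$ annihilates $Q_c$ by Proposition \ref{Propo:ClassicalEqui} and, being bounded with bounded gradient, satisfies $Q_c^N[\cdot]=Q_c[\cdot]=0$ for $N$ large, so it is indeed an equilibrium.

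Finally, for $\mathbb{Q}_c^N$, which is linear in $g$, I would test $\mathbb{Q}_c^N[\mathcal{E}^c]=0$ against $\mathcal{E}^c$ itself; the linear cut-off weak formulation yields $0=\iiint_{\mathcal{S}(x)^3}\chi_N^{*}[\omega\omega_1\omega_2]^{-1}\delta(k-k_1-k_2)\delta(\omega-\omega_1-\omega_2)(\mathcal{E}^c-\mathcal{E}^c_1-\mathcal{E}^c_2)^2$, and the same ``$\chi_N^{*}\equiv 1$ for $N$ large'' argument applied now to $1/\mathcal{E}^c$ shows $\mathcal{E}^c$ is a collisional invariant, whence $\mathcal{E}^c=a_x\omega+b_x\cdot k$ by Proposition \ref{Propo:CollsionInvariant}. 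I expect the main obstacle to be precisely this cut-off-removal point: one must argue that for functions bounded above and below with bounded gradient the truncation weight is eventually identically one, so that the pointwise constraint obtained only on $\{\chi_N^{*}>0\}$ upgrades to the full resonant manifold. Care is also needed because the threshold $N_0$ depends on the a priori bounds of the candidate equilibrium, which is harmless here since continuity on the compact region $\mathcal{S}(x)$ supplies those bounds automatically.
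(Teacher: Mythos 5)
Your core argument --- the symmetrized weak formulation carrying the factor $\chi_N^{*}$, the test function $1/\mathcal{F}^c$, the resulting nonnegative square form, and the appeal to Proposition \ref{Propo:CollsionInvariant} --- is precisely the paper's route: its entire proof of this lemma is the single sentence that it ``follows from the same lines of arguments used in the proof of Proposition \ref{Propo:ClassicalEqui}''. You are also right that the genuine content hidden behind the clause ``when $N$ is sufficiently large'' is the cut-off removal, which the paper never addresses. The problem is that your removal argument does not work as written.

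The step that fails is the claim that $\chi_N^{*}\equiv 1$ on $\mathcal{S}(x)^3$ for all $N\ge N_0$. With the paper's definition, $\chi_N[f]=\varrho_N[f]\,\varrho_N[|\nabla f|]$ and $\varrho_N$ vanishes on $[0,\tfrac{1}{2N}]$; hence $\chi_N[\mathcal{F}^c]=1$ requires not only the upper bounds you derive but also the lower bound $|\nabla \mathcal{F}^c|\ge \tfrac{1}{N}$, which no compactness argument supplies: it fails in a neighborhood of every critical point of $\mathcal{F}^c$. Concretely, any positive constant $c$ has $\chi_N[c]=\varrho_N[c]\,\varrho_N[0]\equiv 0$, hence $Q_c^N[c]\equiv 0$ for every finite $N$, so constants are cut-off equilibria that are not of the form $1/(a_x\omega+b_x\cdot k)$; no choice of $N_0$ excludes them by your mechanism, and the identity $g=g_1+g_2$ that you obtain on $\{\chi_N^{*}>0\}$ is vacuous for such candidates. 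Two smaller points: (i) you cite Proposition \ref{Propo:SxClosed} for closedness of $\mathcal{S}(x)$, but that proposition only shows each $\mathcal{S}^n(x)$ is closed, and the increasing union $\mathcal{S}(x)=\bigcup_{n\ge 1}\mathcal{S}^n(x)$ need not be closed (the paper records only measurability, Corollary \ref{Cor:SxMeasurable}); so compactness, and with it the strictly positive lower bound on $\mathcal{F}^c$, is unjustified. (ii) Continuity does not give a ``bounded gradient''; you need the paper's convention that $C^1(\mathcal{S}(x))$ consists of restrictions of $C^1(\mathbb{T}^3)$ functions, without which $\chi_N[\mathcal{F}^c]$ is not even defined. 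In short, you correctly isolated the real obstacle that the paper's one-line proof glosses over, but the proposed resolution --- upgrading the constraint from $\{\chi_N^{*}>0\}$ to the full resonant manifold via ``$\chi_N^{*}\equiv 1$ for large $N$'' --- is not available under the paper's definition of the cut-off.
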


\begin{proof}
The proof follows from the same lines of arguments used in the proof of Proposition \ref{Propo:ClassicalEqui}.
\end{proof}
\subsection{The long time dynamics of solutions to the 3-wave kinetic equation on non-collision and collisional invariant regions}

\subsubsection{An estimate on the distance between $f$ and $\mathcal{F}^c$}\label{Sec:Distance}
This section is devoted to the estimate of the difference between a function $f$ and a local equilibrium $\mathcal{F}^c$, defined on the same collisional invariant region. The two functions $f$ and $\mathcal{F}^c$ are supposed to have the same energy. 
\begin{proposition}\label{Propo:EntropyC}
Let $\mathcal{S}(x)$ be a collisonal invariant region and $f$ be a positive function  such that $f\in L^1(\mathcal{S}(x))$. Let 
\begin{equation}
\label{Propo:Entropy:EquilibriumC}
\mathcal{F}^c(k)  \ = \ \frac{1}{a_x\omega(k) } \ = : \ \frac{1}{\mathcal{E}^c(k)} ,
\end{equation}
where  $a_x\in\mathbb{R}$ satisfying $\mathcal{F}^c(k) >0$ for all $k\in\mathcal{S}(x)$. 

In addition, we assume
\begin{equation}
\label{Propo:Entropy:ConservationC}
\int_{\mathcal{S}(x)}f(k)\omega(k)\mathrm{d}k \ = \  \int_{\mathcal{S}(x)}\mathcal{F}(k)\omega(k)\mathrm{d}k.
\end{equation}

We also define $g$ using \eqref{gC}.

 Then, the following inequalities always hold true for $0\le N\le\infty$
 \begin{equation}\label{Propo:EntropyC:0}
\begin{aligned}
& \int_{\mathcal{S}(x)}\sqrt{f\left|\mathbb{Q}_c^{N,+}[g]-\mathbb{Q}_c^{N,-}[g]\right|}\mathrm{d}k \ \lesssim  \ \left[ \int_{\mathcal{S}(x)}f\mathrm{d}k\right]^\frac12\times\\
& \times\left[\int_{\mathcal{S}(x)\times \mathcal{S}(x)\times \mathcal{S}(x)}[\omega\omega_1\omega_2]^{-1}\chi_N^{*}\delta(k-k_1-k_2)\delta(\omega-\omega_1-\omega_2)|g-g_1-g_2|^2\mathrm{d}k\mathrm{d}k_1\mathrm{d}k_2\right]^\frac14,
\end{aligned}
\end{equation}
and
\begin{equation}\label{Propo:EntropyC:1}
\begin{aligned}
& \ \left\|\sqrt{\mathbb{L}_c^N\mathcal{E}^c|f-\mathcal{F}^c|} \right\|_{L^1(\mathcal{S}(x))} \ \lesssim   \ \left[ \int_{\mathcal{S}(x)}f\mathrm{d}k\right]^\frac12\left\{\|g-\mathcal{E}^c\|_{L^1(\mathcal{S}(x))}^\frac12\ + \ \right.\\
&\int_{\mathcal{S}(x)\times \mathcal{S}(x)\times \mathcal{S}(x)}[\omega\omega_1\omega_2]^{-1} \left.\left. \chi_N^{*}\delta(k-k_1-k_2)\delta(\omega-\omega_1-\omega_2)|g-g_1-g_2|^2\mathrm{d}k\mathrm{d}k_1\mathrm{d}k_2\right]^\frac14\right\}
\end{aligned}
\end{equation} 
in which the constants on the right hand sides do not depend on $f$.
\end{proposition}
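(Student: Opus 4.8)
The plan is to reduce both bounds to the symmetrised weak form of the cut-off operator followed by two applications of Cauchy--Schwarz. First I would record the analogue of Lemma \ref{Lemma:WeakFormulationClassical} for $\mathbb{Q}_c^N[g]=\mathbb{Q}_c^{N,+}[g]-\mathbb{Q}_c^{N,-}[g]$: because the kernel $\chi_N^{*}[\omega\omega_1\omega_2]^{-1}$ is symmetric in $(k,k_1,k_2)$, the same relabellings $k\leftrightarrow k_1$, $k\leftrightarrow k_2$ used there give, for every test function $\varphi$ supported in $\mathcal{S}(x)$,
\[
\int_{\mathcal{S}(x)}\mathbb{Q}_c^N[g]\,\varphi\,\mathrm{d}k=\iiint_{\mathcal{S}(x)^3}\chi_N^{*}[\omega\omega_1\omega_2]^{-1}\delta(k-k_1-k_2)\delta(\omega-\omega_1-\omega_2)[g-g_1-g_2]\big(\varphi(k)-\varphi(k_1)-\varphi(k_2)\big)\,\mathrm{d}k\,\mathrm{d}k_1\,\mathrm{d}k_2 .
\]
I would also note the finiteness facts that underlie everything: the integral $\iiint_{\mathcal{S}(x)^3}\chi_N^{*}[\omega\omega_1\omega_2]^{-1}\delta(k-k_1-k_2)\delta(\omega-\omega_1-\omega_2)\,\mathrm{d}k\,\mathrm{d}k_1\,\mathrm{d}k_2$ and its backward counterpart are bounded, which follows from the boundedness of the index functionals (Proposition \ref{Propo:Measure}(v)) since $\mathcal{S}(x)$ stays away from the edges by Corollary \ref{Coll:Edges}.

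For \eqref{Propo:EntropyC:0} I would take $\varphi=\mathrm{sgn}(\mathbb{Q}_c^N[g])$, obtained as a limit of smooth functions with $|\varphi|\le1$. Then $|\varphi(k)-\varphi(k_1)-\varphi(k_2)|\le3$, and Cauchy--Schwarz against the positive measure $\chi_N^{*}[\omega\omega_1\omega_2]^{-1}\delta(k-k_1-k_2)\delta(\omega-\omega_1-\omega_2)$ together with the finiteness above gives
\[
\int_{\mathcal{S}(x)}|\mathbb{Q}_c^N[g]|\,\mathrm{d}k\lesssim\Big[\iiint_{\mathcal{S}(x)^3}\chi_N^{*}[\omega\omega_1\omega_2]^{-1}\delta(k-k_1-k_2)\delta(\omega-\omega_1-\omega_2)|g-g_1-g_2|^2\,\mathrm{d}k\,\mathrm{d}k_1\,\mathrm{d}k_2\Big]^{1/2}.
\]
A second Cauchy--Schwarz, $\int_{\mathcal{S}(x)}\sqrt{f}\,\sqrt{|\mathbb{Q}_c^N[g]|}\,\mathrm{d}k\le[\int_{\mathcal{S}(x)}f\,\mathrm{d}k]^{1/2}[\int_{\mathcal{S}(x)}|\mathbb{Q}_c^N[g]|\,\mathrm{d}k]^{1/2}$, then yields \eqref{Propo:EntropyC:0}.

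The estimate \eqref{Propo:EntropyC:1} rests on the exact collisional invariance of $\mathcal{E}^c=a_x\omega+b_x\cdot k$. Since $\mathcal{E}^c=\mathcal{E}^c_1+\mathcal{E}^c_2$ on the forward manifold and $\mathcal{E}^c_1-\mathcal{E}^c_2=\mathcal{E}^c$ on the backward one, plugging $\mathcal{E}^c$ into \eqref{GainCutC} collapses every gain integrand to $\mathcal{E}^c(k)$ times the loss kernel, i.e. $\mathbb{Q}_c^{N,-}[\mathcal{E}^c]=\mathcal{E}^c\mathbb{L}_c^N$. Combined with $\mathbb{Q}_c^{N,+}[g]=g\mathbb{L}_c^N$ and the pointwise algebra $\mathcal{E}^c|f-\mathcal{F}^c|=f|g-\mathcal{E}^c|$ (from $f=1/g$, $\mathcal{F}^c=1/\mathcal{E}^c$), this gives the identity $\mathbb{L}_c^N\mathcal{E}^c|f-\mathcal{F}^c|=f\,\big|\mathbb{Q}_c^{N,+}[g]-\mathbb{Q}_c^{N,-}[\mathcal{E}^c]\big|$. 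I would then telescope, using linearity of $\mathbb{Q}_c^{N,-}$,
\[
\mathbb{Q}_c^{N,+}[g]-\mathbb{Q}_c^{N,-}[\mathcal{E}^c]=\mathbb{Q}_c^N[g]+\mathbb{Q}_c^{N,-}[g-\mathcal{E}^c],
\]
apply $\sqrt{a+b}\le\sqrt{a}+\sqrt{b}$ and Cauchy--Schwarz to split the left side of \eqref{Propo:EntropyC:1} into $\int_{\mathcal{S}(x)}\sqrt{f|\mathbb{Q}_c^N[g]|}\,\mathrm{d}k$, which is exactly controlled by \eqref{Propo:EntropyC:0}, and $[\int_{\mathcal{S}(x)}f\,\mathrm{d}k]^{1/2}[\int_{\mathcal{S}(x)}|\mathbb{Q}_c^{N,-}[g-\mathcal{E}^c]|\,\mathrm{d}k]^{1/2}$. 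For the latter, integrating out the resonant delta functions by Fubini and using the boundedness of the integrated kernels bounds $\int_{\mathcal{S}(x)}|\mathbb{Q}_c^{N,-}[g-\mathcal{E}^c]|\,\mathrm{d}k\lesssim\|g-\mathcal{E}^c\|_{L^1(\mathcal{S}(x))}$, producing the $\|g-\mathcal{E}^c\|_{L^1(\mathcal{S}(x))}^{1/2}$ term.

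The algebra is clean; the real work, and the main obstacle, is the rigorous handling of the singular products $\delta(k-k_1-k_2)\delta(\omega-\omega_1-\omega_2)$. All finiteness and Fubini steps must be read through the regularisation of the delta functions used in Proposition \ref{Propo:Measure}, whose only available bounds on the index functionals degenerate like $1/\sqrt{|1\pm e^{2\pi i k^j}|}$ near $\{k^j\in\{0,\pm\tfrac12\}\}$. I would therefore lean on Corollary \ref{Coll:Edges} to keep $\mathcal{S}(x)$ a positive distance from the edges and on the local integrability of those singularities, so that all kernel integrals are finite with constants independent of $f$, as the statement requires.
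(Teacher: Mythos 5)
Your proposal is correct and follows essentially the same route as the paper's proof: the pointwise identity $\mathcal{E}^c|f-\mathcal{F}^c| = f|g-\mathcal{E}^c|$, the cancellation $\mathbb{Q}_c^{N,+}[\mathcal{E}^c]=\mathbb{Q}_c^{N,-}[\mathcal{E}^c]=\mathcal{E}^c\mathbb{L}_c^N$ from collisional invariance of $\mathcal{E}^c$, two applications of Cauchy--Schwarz, and the $L^1$ bound $\int_{\mathcal{S}(x)}|\mathbb{Q}_c^{N,-}[g-\mathcal{E}^c]|\,\mathrm{d}k\lesssim\|g-\mathcal{E}^c\|_{L^1(\mathcal{S}(x))}$ via the bounded integrated kernels from Proposition \ref{Propo:Measure} and Corollary \ref{Coll:Edges}. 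The only cosmetic difference is that you obtain $\int_{\mathcal{S}(x)}|\mathbb{Q}_c^{N}[g]|\,\mathrm{d}k\lesssim\bigl[\iiint\chi_N^{*}[\omega\omega_1\omega_2]^{-1}\delta\delta\,|g-g_1-g_2|^2\bigr]^{1/2}$ through the symmetrized weak form tested against $\mathrm{sgn}(\mathbb{Q}_c^N[g])$, whereas the paper gets the identical bound by a pointwise triangle inequality followed by symmetrization of the variables $k,k_1,k_2$; likewise your telescoping $\mathbb{Q}_c^{N,+}[g]-\mathbb{Q}_c^{N,-}[\mathcal{E}^c]=\mathbb{Q}_c^N[g]+\mathbb{Q}_c^{N,-}[g-\mathcal{E}^c]$ is the paper's three-term triangle-inequality decomposition with the vanishing term already removed.
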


\begin{proof}
Considering the difference between $f$ and $\mathcal{F}^c$ on $\mathcal{S}(x)$, we find
$$|f-\mathcal{F}^c| \ = \ \left|\frac{1}{g}-\frac{1}{\mathcal{E}^c}\right| \ = \ \frac{|g-\mathcal{E}^c|}{g\mathcal{E}^c},$$
which then implies
$$\mathcal{E}^c|f-\mathcal{F}^c| \ = \ f|g-\mathcal{E}^c|.$$

Multiplying both sides with $\mathbb{L}_c^N$ and taking the square yields
$$\sqrt{\mathbb{L}_c^N\mathcal{E}^c|f-\mathcal{F}^c|} \ = \ \sqrt{\mathbb{L}_c^Nf|g-\mathcal{E}^c|},$$
which, by the fact that $\mathbb{L}_c^Ng= \mathbb{Q}_c^{N,+}[g]$ and  $\mathbb{L}_c^N\mathcal{E}^c= \mathbb{Q}_c^{N,+}[\mathcal{E}^c]$, implies
$$\sqrt{\mathbb{L}_c^N\mathcal{E}^c|f-\mathcal{F}^c|} \ = \ \sqrt{f\left|\mathbb{Q}_c^{N,+}[g]-\mathbb{Q}_c^{N,+}[\mathcal{E}^c]\right|}.$$
Applying the triangle inequality to the right hand side gives
$$\begin{aligned}
\sqrt{\mathbb{L}_c^N\mathcal{E}^c|f-\mathcal{F}^c|} \ \lesssim & \ \sqrt{f\left|\mathbb{Q}_c^{N,+}[g]-\mathbb{Q}_c^{N,-}[g]\right|} \ + \ \sqrt{f\left|\mathbb{Q}_c^{N,-}[g]-\mathbb{Q}_c^{N,-}[\mathcal{E}^c]\right|}\\
& \ + \ \sqrt{f\left|\mathbb{Q}_c^{N,+}[\mathcal{E}^c]-\mathbb{Q}_c^{N,-}[\mathcal{E}^c]\right|}.
\end{aligned}$$
By Lemma \ref{Lemma:EquilibriumCutoff}, the last term on the right hand side of the above inequality vanishes, yielding
\begin{equation}
\label{Propo:EntropyC:E1}
\begin{aligned}
\sqrt{\mathbb{L}_c^N\mathcal{E}^c|f-\mathcal{F}^c|} \ \lesssim & \ \sqrt{f\left|\mathbb{Q}_c^{N,+}[g]-\mathbb{Q}_c^{N,-}[g]\right|} \ + \ \sqrt{f\left|\mathbb{Q}_c^{N,-}[g]-\mathbb{Q}_c^{N,-}[\mathcal{E}^c]\right|}.\end{aligned}\end{equation}
Integrating the first term on the right hand side and using H\"older's inequality leads to
\begin{equation}\label{Propo:EntropyC:E2}
\begin{aligned}
\left(\int_{\mathcal{S}(x)}\sqrt{f\left|\mathbb{Q}_c^{N,+}[g]-\mathbb{Q}_c^{N,-}[g]\right|}\mathrm{d}k\right)^2 \ \le \ \left( \int_{\mathcal{S}(x)}f\mathrm{d}k\right)\left(\int_{\mathcal{S}(x)}\left|\mathbb{Q}_c^{N,+}[g]-\mathbb{Q}_c^{N,-}[g]\right|\mathrm{d}k\right).
\end{aligned}
\end{equation}
Observe that
\begin{equation*}
\begin{aligned}
& \ \left|\mathbb{Q}_c^{N,+}[g]-\mathbb{Q}_c^{N,-}[g]\right|\  \le \ \\
\  \le \ & \int_{\mathcal{S}(x)\times \mathcal{S}(x)}[\omega\omega_1\omega_2]^{-1}\chi_N^{*}\delta(k-k_1-k_2)\delta(\omega-\omega_1-\omega_2)|g-g_1-g_2|\mathrm{d}k_1\mathrm{d}k_2\\
& + \ 2\int_{\mathcal{S}(x)\times \mathcal{S}(x)}[\omega\omega_1\omega_2]^{-1}\chi_N^{*}\delta(k_1-k-k_2)\delta(\omega_1-\omega-\omega_2)|g_1-g_2-g|\mathrm{d}k_1\mathrm{d}k_2,
\end{aligned}
\end{equation*} 
which, after integrating in $k$ and taking into account the symmetry of $k,k_1,k_2$, yields
\begin{equation*}
\begin{aligned}
& \ \int_{\mathcal{S}(x)} \left|\mathbb{Q}_c^{N,+}[g]-\mathbb{Q}_c^{N,-}[g]\right|\mathrm{d}k\  \le \ \\
\  \le \ & 3\int_{\mathcal{S}(x)\times \mathcal{S}(x)\times \mathcal{S}(x)}[\omega\omega_1\omega_2]^{-1}\chi_N^{*}\delta(k-k_1-k_2)\delta(\omega-\omega_1-\omega_2)|g-g_1-g_2|\mathrm{d}k\mathrm{d}k_1\mathrm{d}k_2.
\end{aligned}
\end{equation*} 
Applying H\"older's inequality again to the right hand side implies
 \begin{equation}\label{Propo:EntropyC:E3a}
\begin{aligned}
& \ \int_{\mathcal{S}(x)} \left|\mathbb{Q}_c^{N,+}[g]-\mathbb{Q}_c^{N,-}[g]\right|\mathrm{d}k\  \le \ \\
\  \le \ & 3\left[\int_{\mathcal{S}(x)\times \mathcal{S}(x)\times \mathcal{S}(x)}[\omega\omega_1\omega_2]^{-1}\chi_N^{*}\delta(k-k_1-k_2)\delta(\omega-\omega_1-\omega_2)\mathrm{d}k\mathrm{d}k_1\mathrm{d}k_2\right]^\frac12\times\\
& \times \left[\int_{\mathcal{S}(x)\times \mathcal{S}(x)\times \mathcal{S}(x)}[\omega\omega_1\omega_2]^{-1}\chi_N^{*}\delta(k-k_1-k_2)\delta(\omega-\omega_1-\omega_2)|g-g_1-g_2|^2\mathrm{d}k\mathrm{d}k_1\mathrm{d}k_2\right]^\frac12.
\end{aligned}
\end{equation} 
Using the fact that $\chi^*_N\le 1$, Corollary \ref{Coll:Edges} and Proposition \ref{Lemma:RestrictionSx} to bound the integral containing only $[\omega\omega_1\omega_2]^{-1}\chi_N^{*}\delta(k-k_1-k_2)\delta(\omega-\omega_1-\omega_2)$, we derive from the above inequality the following estimate
\begin{equation}\label{Propo:EntropyC:E3}
\begin{aligned}
& \ \int_{\mathcal{S}(x)} \left|\mathbb{Q}_c^{N,+}[g]-\mathbb{Q}_c^{N,-}[g]\right|\mathrm{d}k\  \le \ \\
\  \le \ & 3\left[\int_{\mathcal{S}(x)\times \mathcal{S}(x)\times \mathcal{S}(x)}[\omega\omega_1\omega_2]^{-1}\chi_N^{*}\delta(k-k_1-k_2)\delta(\omega-\omega_1-\omega_2)\mathrm{d}k\mathrm{d}k_1\mathrm{d}k_2\right]^\frac12\times\\
& \times \left[\int_{\mathcal{S}(x)\times \mathcal{S}(x)\times \mathcal{S}(x)}[\omega\omega_1\omega_2]^{-1}\chi_N^{*}\delta(k-k_1-k_2)\delta(\omega-\omega_1-\omega_2)|g-g_1-g_2|^2\mathrm{d}k\mathrm{d}k_1\mathrm{d}k_2\right]^\frac12\\
\  \lesssim \ & \left[\int_{\mathcal{S}(x)\times \mathcal{S}(x)\times \mathcal{S}(x)}[\omega\omega_1\omega_2]^{-1}\chi_N^{*}\delta(k-k_1-k_2)\delta(\omega-\omega_1-\omega_2)|g-g_1-g_2|^2\mathrm{d}k\mathrm{d}k_1\mathrm{d}k_2\right]^\frac12.
\end{aligned}
\end{equation} 
Putting \eqref{Propo:EntropyC:E2} and \eqref{Propo:EntropyC:E3}  together, we obtain
\begin{equation}\label{Propo:EntropyC:E4}
\begin{aligned}
& \int_{\mathcal{S}(x)}\sqrt{f\left|\mathbb{Q}_c^{N,+}[g]-\mathbb{Q}_c^{N,-}[g]\right|}\mathrm{d}k \ \lesssim  \ \left[ \int_{\mathcal{S}(x)}f\mathrm{d}k\right]^\frac12\times\\
& \times\left[\int_{\mathcal{S}(x)\times \mathcal{S}(x)\times \mathcal{S}(x)}[\omega\omega_1\omega_2]^{-1}\chi_N^{*}\delta(k-k_1-k_2)\delta(\omega-\omega_1-\omega_2)|g-g_1-g_2|^2\mathrm{d}k\mathrm{d}k_1\mathrm{d}k_2\right]^\frac14.
\end{aligned}
\end{equation}
Integrating the second term on the right hand side of \eqref{Propo:EntropyC:E1} and using H\"older's inequality 
\begin{equation}\label{Propo:EntropyC:E5}
\begin{aligned}
\left(\int_{\mathcal{S}(x)}\sqrt{f\left|\mathbb{Q}_c^{N,-}[g]-\mathbb{Q}_c^{N,-}[\mathcal{E}^c]\right|}\mathrm{d}k\right)^2 \ \le \ \left( \int_{\mathcal{S}(x)}f\mathrm{d}k\right)\left(\int_{\mathcal{S}(x)}\left|\mathbb{Q}_c^{N,-}[g]-\mathbb{Q}_c^{N,-}[\mathcal{E}^c]\right|\mathrm{d}k\right).
\end{aligned}
\end{equation}
It is straightforward that 
\begin{equation*}
\begin{aligned}
& \ \left|\mathbb{Q}_c^{N,-}[g]-\mathbb{Q}_c^{N,-}[\mathcal{E}^c]\right|\  \le \ \\
\  \le \ & \int_{\mathcal{S}(x)\times \mathcal{S}(x)}[\omega\omega_1\omega_2]^{-1}\chi_N^{*}\delta(k-k_1-k_2)\delta(\omega-\omega_1-\omega_2)[|g_1-\mathcal{E}_1^c|+|g_2-\mathcal{E}_2^c|]\mathrm{d}k_1\mathrm{d}k_2\\
& + \ 2\int_{\mathcal{S}(x)\times \mathcal{S}(x)}[\omega\omega_1\omega_2]^{-1}\chi_N^{*}\delta(k_1-k-k_2)\delta(\omega_1-\omega-\omega_2)|g_1-\mathcal{E}_1^c|\mathrm{d}k_1\mathrm{d}k_2\\
& + \ 2\int_{\mathcal{S}(x)\times \mathcal{S}(x)}[\omega\omega_1\omega_2]^{-1}\chi_N^{*}\delta(k_1-k-k_2)\delta(\omega_1-\omega-\omega_2)|g_2-\mathcal{E}_2^c|\mathrm{d}k_1\mathrm{d}k_2.
\end{aligned}
\end{equation*} 
Integrating in $k$, we immediately find
\begin{equation*}
\begin{aligned}
& \ \int_{\mathcal{S}(x)}\left|\mathbb{Q}_c^{N,-}[g]-\mathbb{Q}_c^{N,-}[\mathcal{E}^c]\right|\mathrm{d}k\  \le \ \\
\  \le \ & \int_{\mathcal{S}(x)\times \mathcal{S}(x)\times \mathcal{S}(x)}[\omega\omega_1\omega_2]^{-1}\chi_N^{*}\delta(k-k_1-k_2)\delta(\omega-\omega_1-\omega_2)[|g_1-\mathcal{E}_1^c|+|g_2-\mathcal{E}_2^c|]\mathrm{d}k\mathrm{d}k_1\mathrm{d}k_2\\
& + \ 2\int_{\mathcal{S}(x)\times \mathcal{S}(x)}[\omega\omega_1\omega_2]^{-1}\chi_N^{*}\delta(k_1-k-k_2)\delta(\omega_1-\omega-\omega_2)|g_1-\mathcal{E}_1^c|\mathrm{d}k\mathrm{d}k_1\mathrm{d}k_2\\
& + \ 2\int_{\mathcal{S}(x)\times \mathcal{S}(x)\times \mathcal{S}(x)}[\omega\omega_1\omega_2]^{-1}\chi_N^{*}\delta(k_1-k-k_2)\delta(\omega_1-\omega-\omega_2)|g_2-\mathcal{E}_2^c|\mathrm{d}k\mathrm{d}k_1\mathrm{d}k_2,
\end{aligned}
\end{equation*} 
which, by  the symmetry between $k_1$ and $k_2$ and the fact that $\chi^*_N\le 1$, implies
\begin{equation*}
\begin{aligned}
& \ \int_{\mathcal{S}(x)}\left|\mathbb{Q}_c^{N,-}[g]-\mathbb{Q}_c^{N,-}[\mathcal{E}^c]\right|\mathrm{d}k\  \le \ \\
\  \le \ & 2\int_{\mathcal{S}(x)\times \mathcal{S}(x)\times \mathcal{S}(x)}[\omega\omega_1\omega_2]^{-1}\delta(k-k_1-k_2)\delta(\omega-\omega_1-\omega_2)|g_1-\mathcal{E}_1^c|\mathrm{d}k\mathrm{d}k_1\mathrm{d}k_2\\
& + \ 2\int_{\mathcal{S}(x)\times \mathcal{S}(x)}[\omega\omega_1\omega_2]^{-1}\delta(k_1-k-k_2)\delta(\omega_1-\omega-\omega_2)|g_1-\mathcal{E}_1^c|\mathrm{d}k\mathrm{d}k_1\mathrm{d}k_2\\
& + \ 2\int_{\mathcal{S}(x)\times \mathcal{S}(x)\times \mathcal{S}(x)}[\omega\omega_1\omega_2]^{-1}\delta(k_1-k-k_2)\delta(\omega_1-\omega-\omega_2)|g_2-\mathcal{E}_2^c|\mathrm{d}k\mathrm{d}k_1\mathrm{d}k_2.
\end{aligned}
\end{equation*} 
Now, we can also combine the last and the first terms on the right hand side using the change of variables between $k,k_1,k_2$ to get
\begin{equation}\label{Propo:EntropyC:E6}
\begin{aligned}
& \ \int_{\mathcal{S}(x)}\left|\mathbb{Q}_c^{N,-}[g]-\mathbb{Q}_c^{N,-}[\mathcal{E}^c]\right|\mathrm{d}k\  \le \ \\
\  \le \ & 4\int_{\mathcal{S}(x)\times \mathcal{S}(x)\times \mathcal{S}(x)}[\omega\omega_1\omega_2]^{-1}\delta(k-k_1-k_2)\delta(\omega-\omega_1-\omega_2)|g_1-\mathcal{E}_1^c|\mathrm{d}k\mathrm{d}k_1\mathrm{d}k_2\\
& + \ 2\int_{\mathcal{S}(x)\times \mathcal{S}(x)}[\omega\omega_1\omega_2]^{-1}\delta(k_1-k-k_2)\delta(\omega_1-\omega-\omega_2)|g_1-\mathcal{E}_1^c|\mathrm{d}k\mathrm{d}k_1\mathrm{d}k_2.
\end{aligned}
\end{equation} 
Let us  estimate  each term on the right hand side of \eqref{Propo:EntropyC:E6}. 

Taking the integration in $k_2$ of the first term yields
\begin{equation*}
\begin{aligned}
& 4\int_{\mathcal{S}(x)\times \mathcal{S}(x)\times \mathcal{S}(x)}[\omega\omega_1\omega_2]^{-1}\delta(k-k_1-k_2)\delta(\omega-\omega_1-\omega_2)|g_1-\mathcal{E}_1^c|\mathrm{d}k\mathrm{d}k_1\mathrm{d}k_2\\
= \ & 4\int_{\mathcal{S}(x)\times \mathcal{S}(x)}[\omega(k)\omega(k_1)\omega(k-k_1)]^{-1}\delta(\omega(k)-\omega(k_1)-\omega(k-k_1))|g_1-\mathcal{E}_1^c|\mathrm{d}k\mathrm{d}k_1.
\end{aligned}
\end{equation*} 
Observing that $\omega(k)\ge\omega_0>0$ for all $k\in\mathbb{T}^3$, we find
\begin{equation*}
\begin{aligned}
& 4\int_{\mathcal{S}(x)\times \mathcal{S}(x)\times \mathcal{S}(x)}[\omega\omega_1\omega_2]^{-1}\delta(k-k_1-k_2)\delta(\omega-\omega_1-\omega_2)|g_1-\mathcal{E}_1^c|\mathrm{d}k\mathrm{d}k_1\mathrm{d}k_2\\
\lesssim \ & \int_{\mathcal{S}(x)\times \mathcal{S}(x)}\delta(\omega(k)-\omega(k_1)-\omega(k-k_1))|g_1-\mathcal{E}_1^c|\mathrm{d}k\mathrm{d}k_1,
\end{aligned}
\end{equation*} 
which, after integrating with respect to $k_1$, leads to
\begin{equation*}
\begin{aligned}
& 4\int_{\mathcal{S}(x)\times \mathcal{S}(x)\times \mathcal{S}(x)}[\omega\omega_1\omega_2]^{-1}\delta(k-k_1-k_2)\delta(\omega-\omega_1-\omega_2)|g_1-\mathcal{E}_1^c|\mathrm{d}k\mathrm{d}k_1\mathrm{d}k_2\\
\lesssim \ & \int_{\mathcal{S}(x)}\left[\int_{\mathcal{S}(x)}\delta(\omega(k)-\omega(k_1)-\omega(k-k_1))\mathrm{d}k\right]|g_1-\mathcal{E}_1^c|\mathrm{d}k_1.
\end{aligned}
\end{equation*} 
Note that the integration with respect to $k$ is uniformly bounded in $k_1\in\mathbb{T}^3$ by Corollary \ref{Coll:Edges} and Proposition \ref{Lemma:RestrictionSx}, we then get
\begin{equation}\label{Propo:EntropyC:E7}
\begin{aligned}
& 4\int_{\mathcal{S}(x)\times \mathcal{S}(x)\times \mathcal{S}(x)}[\omega\omega_1\omega_2]^{-1}\delta(k-k_1-k_2)\delta(\omega-\omega_1-\omega_2)|g_1-\mathcal{E}_1^c|\mathrm{d}k\mathrm{d}k_1\mathrm{d}k_2\\
\lesssim \ & \int_{\mathcal{S}(x)}|g_1-\mathcal{E}_1^c|\mathrm{d}k_1 \ = \ \|g-\mathcal{E}^c\|_{L^1(\mathcal{S}(x))}.
\end{aligned}
\end{equation} 

The second term on the right hand side of \eqref{Propo:EntropyC:E6} can also be estimated in the same way. Taking the integration in $k_2$ of the second term yields
\begin{equation*}
\begin{aligned}
& 2\int_{\mathcal{S}(x)\times \mathcal{S}(x)\times \mathcal{S}(x)}[\omega\omega_1\omega_2]^{-1}\delta(k_1-k-k_2)\delta(\omega_1-\omega-\omega_2)|g_1-\mathcal{E}_1^c|\mathrm{d}k\mathrm{d}k_1\mathrm{d}k_2\\
= \ & 2\int_{\mathcal{S}(x)\times \mathcal{S}(x)}[\omega(k)\omega(k_1)\omega(k-k_1)]^{-1}\delta(\omega(k_1)-\omega(k)-\omega(k_1-k))|g_1-\mathcal{E}_1^c|\mathrm{d}k\mathrm{d}k_1,
\end{aligned}
\end{equation*} 
which, similarly as above, can be bounded as
\begin{equation*}
\begin{aligned}
& 2\int_{\mathcal{S}(x)\times \mathcal{S}(x)\times \mathcal{S}(x)}[\omega\omega_1\omega_2]^{-1}\delta(k_1-k-k_2)\delta(\omega_1-\omega-\omega_2)|g_1-\mathcal{E}_1^c|\mathrm{d}k\mathrm{d}k_1\mathrm{d}k_2\\
\lesssim \ & \int_{\mathcal{S}(x)}\left[\int_{\mathcal{S}(x)}\delta(\omega(k_1)-\omega(k)-\omega(k_1-k))\mathrm{d}k\right]|g_1-\mathcal{E}_1^c|\mathrm{d}k_1.
\end{aligned}
\end{equation*} 
Again, the integration with respect to $k$ is bounded, we therefore have
\begin{equation}\label{Propo:EntropyC:E8}
\begin{aligned}
& 4\int_{\mathcal{S}(x)\times \mathcal{S}(x)\times \mathcal{S}(x)}[\omega\omega_1\omega_2]^{-1}\delta(k_1-k-k_2)\delta(\omega_1-\omega-\omega_2)|g_1-\mathcal{E}_1^c|\mathrm{d}k\mathrm{d}k_1\mathrm{d}k_2\\
\lesssim \ & \int_{\mathcal{S}(x)}|g_1-\mathcal{E}_1^c|\mathrm{d}k_1 \ = \ \|g-\mathcal{E}^c\|_{L^1(\mathcal{S}(x))}.
\end{aligned}
\end{equation} 
Now, combining \eqref{Propo:EntropyC:E5},\eqref{Propo:EntropyC:E6}, \eqref{Propo:EntropyC:E7}, \eqref{Propo:EntropyC:E8} leads to
\begin{equation}\label{Propo:EntropyC:E9}
\begin{aligned}
& \ \int_{\mathcal{S}(x)}\sqrt{f\left|\mathbb{Q}_c^{N,-}[g]-\mathbb{Q}_c^{N,-}[\mathcal{E}^c]\right|}\mathrm{d}k\  \lesssim \ \\
\lesssim \ &  \left[ \int_{\mathcal{S}(x)}f\mathrm{d}k\right]^\frac12\left[\int_{\mathcal{S}(x)}|g_1-\mathcal{E}_1^c|\mathrm{d}k_1\right]^\frac12 \ = \  \left[ \int_{\mathcal{S}(x)}f\mathrm{d}k\right]^\frac12\|g-\mathcal{E}^c\|_{L^1(\mathcal{S}(x))}^\frac12.
\end{aligned}
\end{equation} 

Putting together the three estimates \eqref{Propo:EntropyC:E1},\eqref{Propo:EntropyC:E4} and \eqref{Propo:EntropyC:E9} yields
\begin{equation}\label{Propo:EntropyC:E10}
\begin{aligned}
& \ \left\|\sqrt{\mathbb{L}_c^N\mathcal{E}^c|f-\mathcal{F}^c|} \right\|_{L^1(\mathcal{S}(x))} \ \lesssim   \ \left[ \int_{\mathcal{S}(x)}f\mathrm{d}k\right]^\frac12\|g-\mathcal{E}^c\|_{L^1(\mathcal{S}(x))}^\frac12\ + \ \left[ \int_{\mathcal{S}(x)}f\mathrm{d}k\right]^\frac12\times \\
& \times\left[\int_{\mathcal{S}(x)\times \mathcal{S}(x)\times \mathcal{S}(x)}[\omega\omega_1\omega_2]^{-1}\chi_N^{*}\delta(k-k_1-k_2)\delta(\omega-\omega_1-\omega_2)|g-g_1-g_2|^2\mathrm{d}k\mathrm{d}k_1\mathrm{d}k_2\right]^\frac14
\end{aligned}
\end{equation} 
\end{proof}

\subsubsection{A lower bound on the solution of the equation with the cut-off collision operator on the collisional invariant region $\mathcal{S}(x)$}\label{Sec:LowerBound}
The following Proposition provides a uniform lower bound to  classical solutions of the wave kinetic equation on $\mathcal{S}(x)$, under the effect of the cut-off operator $\chi_N$.
\begin{proposition}\label{Propo:LowerBound}
Suppose that the initial condition $f_0$ of \eqref{PhononEqC}  is bounded from below by a strictly positive constant ${f_0^*}$, and $f_0\in C(\mathcal{S}(x))$. Let $f$ be a classical solution in $C^0([0,\infty), C(\mathcal{S}(x)))$ $\cap$ $ C^1((0,\infty), C(\mathcal{S}(x)))$ to  \eqref{PhononEqC} .
%the cut-off equation
%\begin{equation}\label{PhononEqCCutoff}
%\begin{aligned}
%\partial_t f(k,t) \ & =  \ Q_c^N[f](k),\ \ \ \ f(k,0) \ = \ f_0(k),\ \  \forall k\in\mathcal{S}(x).
%\end{aligned}
%\end{equation} 
There exists a strictly positive function ${f^*}(t)>0$, which is non-increasing in $t$, such that $f(t,k)>{f^*}(t)>0$ for all $k\in\mathcal{S}(x)$ and for all $t\ge0$. To be more precise, there exists a universal constant ${f}_*>0$ such that 
$$f(t,k)>{f^*}(t)=\frac{{f}_*}{\sup_{s\in[0,t]}\|f(s,\cdot)\|_{C(\mathcal{S}(x))}}.$$
\end{proposition}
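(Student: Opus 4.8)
The plan is to split the collision operator into a nonnegative \emph{gain} part and a \emph{loss} part proportional to $f(k)$, and then run a Duhamel (mild-formulation) argument, so as to avoid dividing by the very quantity whose positivity we are trying to establish. Concretely, collecting in $Q_c[f](k)$ all the terms carrying the factor $f(k)$ with a negative sign, and using Corollary~\ref{Lemma:RestrictionSx} to replace $\mathbb{T}^3$ by $\mathcal{S}(x)$ in every integral, I would write
\begin{equation*}
Q_c[f](k)\;=\;\mathcal{G}[f](k)\;-\;f(k)\,\mathcal{L}[f](k),
\end{equation*}
where $\mathcal{G}[f](k)\ge 0$ gathers the products $f_1f_2$ together with the remaining nonnegative $f(k)$-terms, and the loss frequency is
\begin{equation*}
\begin{aligned}
\mathcal{L}[f](k)\;=\;& \int_{\mathcal{S}(x)^2}[\omega\omega_1\omega_2]^{-1}\delta(k-k_1-k_2)\delta(\omega-\omega_1-\omega_2)(f_1+f_2)\,\mathrm{d}k_1\mathrm{d}k_2\\
&+\;2\int_{\mathcal{S}(x)^2}[\omega\omega_1\omega_2]^{-1}\delta(k_1-k-k_2)\delta(\omega_1-\omega-\omega_2)f_2\,\mathrm{d}k_1\mathrm{d}k_2 .
\end{aligned}
\end{equation*}
Since $f\ge 0$ and $\mathcal{G}[f]\ge 0$, this yields the differential inequality $\partial_t f(t,k)\ge -f(t,k)\,\mathcal{L}[f](t,k)$.

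Treating $f$ as the \emph{given} classical solution, $\mathcal{L}[f](\cdot,k)$ is a known nonnegative coefficient, and the Duhamel representation of the linear inequality above gives, without ever dividing by $f$,
\begin{equation*}
f(t,k)\;\ge\; f_0(k)\,\exp\!\Big(-\int_0^t \mathcal{L}[f](s,k)\,\mathrm{d}s\Big)\;\ge\; f_0^*\,\exp\!\Big(-\int_0^t \mathcal{L}[f](s,k)\,\mathrm{d}s\Big).
\end{equation*}
To bound $\mathcal{L}$, I use $\omega\ge\omega_0$, so $[\omega\omega_1\omega_2]^{-1}\le\omega_0^{-3}$, and integrate out one momentum via $\delta(k-k_1-k_2)$ (respectively $\delta(k_1-k-k_2)$), reducing each inner integral to the restricted integrals \eqref{RestrictedIntegral1}--\eqref{RestrictedIntegral2}. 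By the boundedness item~(v) of Proposition~\ref{Propo:Measure} these are controlled, and by Corollary~\ref{Coll:Edges} every $k\in\mathcal{S}(x)$ avoids the edges $\{k^i=0,\pm\frac12\}$ where those bounds degenerate; hence $\mathcal{L}[f](s,k)\le C_0\,\|f(s,\cdot)\|_{C(\mathcal{S}(x))}$ with $C_0$ independent of $f$. This already delivers a strictly positive, non-increasing lower bound of the form $f_0^*\exp(-C_0\int_0^t\|f(s,\cdot)\|_{C(\mathcal{S}(x))}\,\mathrm{d}s)$.

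To reach the sharper reciprocal form $f_*/N(t)$ with $N(t):=\sup_{s\in[0,t]}\|f(s,\cdot)\|_{C(\mathcal{S}(x))}$, the gain term must be retained rather than discarded, since it is precisely the production that prevents $f$ from decaying to zero. Writing $\ell(t):=\inf_{k\in\mathcal{S}(x)}f(t,k)$, I would lower-bound $\mathcal{G}[f](k)\ge c_*\,\ell(t)^2$ using $f_1f_2\ge\ell(t)^2$ together with a uniform lower bound $c_*>0$ on the collision frequency $\int_{\mathcal{S}(x)^2}[\omega\omega_1\omega_2]^{-1}\delta\,\delta\,\mathrm{d}k_1\mathrm{d}k_2$, which is available because, by Propositions~\ref{Propo:ConnectionSet} and~\ref{Propo:Snonempty}, each $k\in\mathcal{S}(x)$ is connected to a positive-measure set of collision partners inside $\mathcal{S}(x)$. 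Combining this with the loss estimate gives the Riccati-type inequality $\ell'(t)\ge c_*\,\ell(t)^2 - C_0\,N(t)\,\ell(t)$, and a barrier/comparison argument --- checking that $t\mapsto f_*/N(t)$ is a subsolution once $f_*$ is chosen small enough relative to $c_*$, $C_0$ and $f_0^*$ --- yields $f(t,k)\ge\ell(t)\ge f_*/N(t)$, which is manifestly non-increasing since $N$ is non-decreasing.

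The main obstacle I anticipate is the \emph{coercivity} of the gain, namely the uniform lower bound $\int_{\mathcal{S}(x)^2}[\omega\omega_1\omega_2]^{-1}\delta\,\delta\ge c_*>0$ for every $k\in\mathcal{S}(x)$: Proposition~\ref{Propo:Measure} supplies only upper bounds on the index functionals, so a genuinely new quantitative non-degeneracy of at least one of the three resonant manifolds is required, and this must be shown to persist uniformly as $k$ approaches the (edge-avoiding) boundary of $\mathcal{S}(x)$. A secondary technical point is the non-smoothness of $N(t)$, which forces the comparison step to be run in an integrated (Duhamel) rather than pointwise-differential form. The positivity and the crude non-increasing bound of the second paragraph are robust and essentially automatic; it is this reciprocal refinement that carries the analytical weight.
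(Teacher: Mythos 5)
Your first two paragraphs coincide with the paper's own opening moves: the paper performs the same gain/loss rearrangement (its \eqref{Propo:LowerBound:E1}), bounds the loss frequency by a constant multiple of $\sup_{s\in[0,t]}\|f(s,\cdot)\|_{C(\mathcal{S}(x))}$ using $\omega\ge\omega_0$ and the boundedness of the resonant integrals, and extracts $f(t,k)\ge f_0^*e^{-\mathcal{C}(T)t}$ from the Duhamel representation \eqref{Propo:LowerBound:E4}--\eqref{Propo:LowerBound:E5}. The genuine gap is in your third paragraph, which is where all the analytic weight sits. The differential inequality $\ell'(t)\ge c_*\ell(t)^2-C_0N(t)\ell(t)$ does \emph{not} imply $\ell(t)\ge f_*/N(t)$ for any choice of $f_*>0$: take $N$ constant and the inequality saturated; every solution of $y'=c_*y^2-C_0Ny$ with $0<y(0)<C_0N/c_*$ decays exponentially to zero, while $f_*/N$ stays constant. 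Your barrier verification fails at exactly this point: on any interval where $N$ is constant, the subsolution condition for $b=f_*/N$ reads $0\le c_*f_*^2/N^2-C_0f_*$, i.e. $f_*\ge C_0N^2/c_*$, so $f_*$ would have to be chosen \emph{large}, not small, and then the initial comparison $\ell(0)\ge f_*/N(0)$ is unavailable. The information destroyed by the estimate $\mathcal{G}[f](k)\ge c_*\ell(t)^2$ is precisely the mechanism that keeps the infimum positive: at a wave vector where $f$ is nearly zero, the gain integrand $f_1f_2$ runs over the resonant partners of $k$, where $f$ need not be small, and no Riccati inequality for the infimum alone can recover that coupling.

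The paper closes this step by a different device: it feeds the already established \emph{pointwise, time-dependent} bound $f\ge f_0^*e^{-\mathcal{C}(T)t}$ (not the infimum) back into the quadratic gain to get $K[f]\ge\mathcal{C}_1e^{-2\mathcal{C}(T)t}$ in \eqref{Propo:LowerBound:E7}, reinserts this source into the Duhamel formula, and then analyzes the resulting explicit function $F(t)=f_0^*e^{-\mathcal{C}(T)t}+\frac{\mathcal{C}_1}{3\mathcal{C}(T)}\left[1-e^{-3\mathcal{C}(T)t}\right]$ introduced after \eqref{Propo:LowerBound:E8}, whose infimum over $[0,T]$ is of size $\mathcal{C}_1/(3\mathcal{C}(T))$, which is of the claimed reciprocal form since $\mathcal{C}(T)$ is proportional to $\mathbb{B}(T)=\sup_{s\in[0,T]}\|f(s,\cdot)\|_{C(\mathcal{S}(x))}$; no comparison principle for the infimum is ever invoked. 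Two of your side remarks are accurate and worth keeping: the uniform positive lower bound on the resonant-surface integrals (your $c_*$, the paper's $\mathfrak{C}_2$ in \eqref{Propo:LowerBound:E7}) is indeed indispensable and is asserted in the paper without proof, Proposition \ref{Propo:Measure} supplying only upper bounds; and the uniformity of all these constants up to the boundary of $\mathcal{S}(x)$ is a real issue shared by both arguments. But as written, your proposal establishes only the exponential-in-time lower bound; the reciprocal refinement, which is the actual content of the proposition, is not reached, and repairing it requires abandoning the Riccati comparison in favor of the paper's feed-back-and-integrate scheme or some other argument that retains the interaction between small and large values of $f$.
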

\begin{proof}
Rearranging the equation, one finds
\begin{equation*}
\begin{aligned}
\partial_t f \ =  \ & \int_{\mathcal{S}(x)\times\mathcal{S}(x)}[\omega\omega_1\omega_2]^{-1}\delta(k-k_1-k_2)\delta(\omega-\omega_1-\omega_2)f_1f_2\mathrm{d}k_1\mathrm{d}k_2\\
& + \  2\int_{\mathcal{S}(x)\times\mathcal{S}(x)}[\omega\omega_1\omega_2]^{-1}\delta(k_1-k-k_2)\delta(\omega_1-\omega-\omega_2)[f_1f_2+ff_1]\mathrm{d}k_1\mathrm{d}k_2\\
& - \ f\left[\int_{\mathcal{S}(x)\times\mathcal{S}(x)}[\omega\omega_1\omega_2]^{-1}\delta(k-k_1-k_2)\delta(\omega-\omega_1-\omega_2)(f_1+f_2)\mathrm{d}k_1\mathrm{d}k_2\right.\\
& + \ \left.2\int_{\mathcal{S}(x)\times\mathcal{S}(x)}[\omega\omega_1\omega_2]^{-1}\delta(k_1-k-k_2)\delta(\omega_1-\omega-\omega_2)f_2\mathrm{d}k_1\mathrm{d}k_2\right].
\end{aligned}
\end{equation*}
Using the symmetry of $f_1$ and $f_2$ in the term containing $f_1+f_2$, we can turn this term into a new term, in which $f_1+f_2$ is replaced by $2f_1$ \begin{equation}\label{Propo:LowerBound:E1}
\begin{aligned}
\partial_t f \ =  \ & \int_{\mathcal{S}(x)\times\mathcal{S}(x)}[\omega\omega_1\omega_2]^{-1}\delta(k-k_1-k_2)\delta(\omega-\omega_1-\omega_2)f_1f_2\mathrm{d}k_1\mathrm{d}k_2\\
& + \  2\int_{\mathcal{S}(x)\times\mathcal{S}(x)}[\omega\omega_1\omega_2]^{-1}\delta(k_1-k-k_2)\delta(\omega_1-\omega-\omega_2)[f_1f_2+ff_1]\mathrm{d}k_1\mathrm{d}k_2\\
& - \ 2f\left[\int_{\mathcal{S}(x)\times\mathcal{S}(x)}[\omega\omega_1\omega_2]^{-1}\delta(k-k_1-k_2)\delta(\omega-\omega_1-\omega_2)f_1\mathrm{d}k_1\mathrm{d}k_2\right.\\
& + \ \left.\int_{\mathcal{S}(x)\times\mathcal{S}(x)}[\omega\omega_1\omega_2]^{-1}\delta(k_1-k-k_2)\delta(\omega_1-\omega-\omega_2)f_2\mathrm{d}k_1\mathrm{d}k_2\right].
\end{aligned}
\end{equation}
Now, let us consider the term with the minus sign
\begin{equation}\label{Propo:LowerBound:E1a}
\begin{aligned}
&  2f\left[\int_{\mathcal{S}(x)\times\mathcal{S}(x)}[\omega\omega_1\omega_2]^{-1}\delta(k-k_1-k_2)\delta(\omega-\omega_1-\omega_2)f_1\mathrm{d}k_1\mathrm{d}k_2\right.\\
& + \ \left.\int_{\mathcal{S}(x)\times\mathcal{S}(x)}[\omega\omega_1\omega_2]^{-1}\delta(k_1-k-k_2)\delta(\omega_1-\omega-\omega_2)f_2\mathrm{d}k_1\mathrm{d}k_2\right].
\end{aligned}
\end{equation}

We  define the function $\mathbb{B}: \mathbb{R}_+\to\mathbb{R}_+$
\begin{equation}
\mathbb{B}(t) \ = \ \sup_{s\in[0,t]}\|f(s,\cdot)\|_{C(\mathcal{S}(x))},
\end{equation}
which is an increasing function in $t$. Using the fact that $\omega\ge \omega_0>0$ and the function $ \mathbb{B}(t)$, we can  bound $\eqref{Propo:LowerBound:E1a}$  from above by
\begin{equation*}
\begin{aligned}
&  \frac{2\mathbb{B}(t) }{\omega_0^3}f\left[\int_{\mathcal{S}(x)\times\mathcal{S}(x)}\delta(k-k_1-k_2)\delta(\omega-\omega_1-\omega_2)\mathrm{d}k_1\mathrm{d}k_2\right.\\
& + \ \left.\int_{\mathcal{S}(x)\times\mathcal{S}(x)}\delta(k_1-k-k_2)\delta(\omega_1-\omega-\omega_2)\mathrm{d}k_1\mathrm{d}k_2\right].
\end{aligned}
\end{equation*}

Integrating in $k_2$ and using the definite of the two delta functions $\delta(k-k_1-k_2)$ and $\delta(k_1-k-k_2)$
\begin{equation*}
\begin{aligned}
&
\ \frac{2\mathbb{B}(t) }{\omega_0^3}f(k)\left[\int_{\mathcal{S}(x)}\delta(\omega(k)-\omega(k_1)-\omega(k-k_1))\mathrm{d}k_1\right.\\
& \left.+ \ \int_{\mathcal{S}(x)}\delta(\omega(k)-\omega(k_1)-\omega(k-k_1))\mathrm{d}k_1\right]\
 \le  \ \frac{2\mathbb{B}(t) }{\omega_0^3}\mathfrak{C}_1f(k) \ =: \ \mathcal{C}(t)f(k).
 \end{aligned}
\end{equation*}

We therefore obtain the following bound for $\partial_t f$
\begin{equation}\label{Propo:LowerBound:E2}
\begin{aligned}
\partial_t f \ \ge  \ & \int_{\mathcal{S}(x)\times\mathcal{S}(x)}[\omega\omega_1\omega_2]^{-1}\delta(k-k_1-k_2)\delta(\omega-\omega_1-\omega_2)f_1f_2\mathrm{d}k_1\mathrm{d}k_2\\
& + \  2\int_{\mathcal{S}(x)\times\mathcal{S}(x)}[\omega\omega_1\omega_2]^{-1}\delta(k_1-k-k_2)\delta(\omega_1-\omega-\omega_2)[f_1f_2+ff_1]\mathrm{d}k_1\mathrm{d}k_2\\
& - \ \mathcal{C}(t)f.
\end{aligned}
\end{equation}
Define the positive terms on the right hand side by $K[f]$, we then have the simplified equation
\begin{equation}\label{Propo:LowerBound:E3}
\begin{aligned}
\partial_t f \ \ge  \ & K[f] - \ \mathcal{C}(t)f,
\end{aligned}
\end{equation}
which, by Duhamel's formula and the mononicity in $t$ of $\mathcal{C}(t)$, gives
\begin{equation}\label{Propo:LowerBound:E4}
\begin{aligned}
f(t,k) \ \ge  \ & f_0(k)e^{-\mathcal{C}(T)t} \ + \ \int_0^tK[f](t-s,k)e^{-\mathcal{C}(T) (t-s)}\mathrm{d}s,
\end{aligned}
\end{equation}

Using the fact that $f_0(k)\ge {f_0^*}>0$, we deduce from \eqref{Propo:LowerBound:E4} the following estimate
\begin{equation}\label{Propo:LowerBound:E5}
\begin{aligned}
f(t,k) \ \ge  \ & {f_0^*}e^{-\mathcal{C}(T)t} \ + \ \int_0^tK[f](t-s,k)e^{-\mathcal{C}(T)(t-s)}\mathrm{d}s.
\end{aligned}
\end{equation}
We observe that the second term on the right hand side is always positive, since it contains only positive components. This implies 
\begin{equation}\label{Propo:LowerBound:E6}
\begin{aligned}
f(t,k) \ \ge  \ & {f_0^*}e^{-\mathcal{C}(T)t},
\end{aligned}
\end{equation}
for all $t\in[0,T]$.

Now, let us examine the operator $K[f]$ in details. Using the fact $\omega\le \omega_0 + 12$, we can bound $K[f]$ as 
\begin{equation*}
\begin{aligned}
K[f] \ \ge \ & [\omega_0 + 12]^{-3}\left[\int_{\mathcal{S}(x)\times\mathcal{S}(x)}\delta(k-k_1-k_2)\delta(\omega-\omega_1-\omega_2)f_1f_2\mathrm{d}k_1\mathrm{d}k_2\right.\\
& + \  2\left.\int_{\mathcal{S}(x)\times\mathcal{S}(x)}[\omega\omega_1\omega_2]^{-1}\delta(k_1-k-k_2)\delta(\omega_1-\omega-\omega_2)[f_1f_2+ff_1]\mathrm{d}k_1\mathrm{d}k_2\right].
\end{aligned}
\end{equation*}
From which, we can use \eqref{Propo:LowerBound:E6}, to bound $f,f_1,f_2$ from below
\begin{equation*}
\begin{aligned}
K[f] \ \ge \ & [\omega_0 + 12]^{-3}\left[\int_{\mathcal{S}(x)\times\mathcal{S}(x)}\delta(k-k_1-k_2)\delta(\omega-\omega_1-\omega_2){f_0^*}^2e^{-2\mathcal{C}(T)t}\mathrm{d}k_1\mathrm{d}k_2\right.\\
& + \  4\left.\int_{\mathcal{S}(x)\times\mathcal{S}(x)}\delta(k_1-k-k_2)\delta(\omega_1-\omega-\omega_2){f_0^*}^2e^{-2\mathcal{C}(T)t}\mathrm{d}k_1\mathrm{d}k_2\right],
\end{aligned}
\end{equation*}
for all $t\in[0,T]$.

The above inequality leads to
\begin{equation}\label{Propo:LowerBound:E7}
\begin{aligned}
K[f] \ \ge \ & \frac{{f_0^*}^2e^{-2\mathcal{C}(T)t}}{[\omega_0 + 12]^3}\left[\int_{\mathcal{S}(x)\times\mathcal{S}(x)}\delta(k-k_1-k_2)\delta(\omega-\omega_1-\omega_2)\mathrm{d}k_1\mathrm{d}k_2\right.\\
& + \  4\left.\int_{\mathcal{S}(x)\times\mathcal{S}(x)}\delta(k_1-k-k_2)\delta(\omega_1-\omega-\omega_2)\mathrm{d}k_1\mathrm{d}k_2\right]\\
 \ \ge \ & \frac{{f_0^*}^2e^{-2\mathcal{C}(T)t}}{[\omega_0 + 12]^3}\mathfrak{C}_2
 \ \ge \  \mathcal{C}_1e^{-2\mathcal{C}(T)t},
\end{aligned}
\end{equation}
for all $t\in[0,T]$.
Note that $\mathcal{C}_1$ is a universal strictly positive constant.

We follow the strategy of \cite{ToanBinh} by plugging \eqref{Propo:LowerBound:E7} into \eqref{Propo:LowerBound:E5} 
\begin{equation}\label{Propo:LowerBound:E8}
\begin{aligned}
f(t,k) \ \ge  \ & {f_0^*}e^{-\mathcal{C}(T)t} \ + \ \mathcal{C}_1 \int_0^te^{-3\mathcal{C}(T)(t-s)}\mathrm{d}s\\
 \ \ge  \ & {f_0^*}e^{-\mathcal{C}(T)t} \ + \   \frac{\mathcal{C}_1}{3\mathcal{C}(T)}[1-e^{-3\mathcal{C}(T)t}],
\end{aligned}
\end{equation}
for all $t\in[0,T]$.

We define the time-dependent function $$F(t)={f_0^*}e^{-\mathcal{C}(T)t} \ + \  \frac{\mathcal{C}_1}{3\mathcal{C}(T)}[1-e^{-3\mathcal{C}(T)t}],$$ which is  continuous and non-negative. 

Pick a finite time $t_0=\frac{c}{\mathcal{C}(T)}>0$, in which $c$ is a fixed constant to be determined later. For $t\in[0,t_0]$, it is clear that $F(t)\ge {f_0^*}e^{-\mathcal{C}(T)t}= {f_0^*}e^{-c}>0$. When $t>t_0$, then $F(t)\ge \frac{\mathcal{C}_1}{3\mathcal{C}(T)} + {f_0^*}e^{-3\mathcal{C}(T)t}[e^{2\mathcal{C}(T)t}-\frac{\mathcal{C}_1}{3\mathcal{C}(T){f_0^*}}]> \frac{\mathcal{C}_1}{3\mathcal{C}(T)} + {f_0^*}e^{-3\mathcal{C}(T)t}[e^{2c}-\frac{\mathcal{C}_1}{3\mathcal{C}(T){f_0^*}}].$ For a suitable choice of $c$, $e^{2c}=\frac{\mathcal{C}_1}{3\mathcal{C}(T){f_0^*}}$. It then follows that $F(t)>\frac{\mathcal{C}_1}{3\mathcal{C}(T)}$, for all $t\in[0,T]$.

As a consequence, $f(t,k)$ is bounded from below by a strictly positive function $\frac{\mathcal{C}_1}{3\mathcal{C}(t)}$  for  $k\in \mathcal{S}(x)$. Since $\mathbb{B}(t)$ is an non-decreasing function of time, it follows that $\frac{\mathcal{C}_1}{3\mathcal{C}(t)}$  is a non-increasing function of time.

\end{proof}

\subsubsection{Convergence to equilibrium of the solution of the equation with the cut-off collision operator on the collisional invariant region $\mathcal{S}(x)$}\label{Sec:Con1}
The below proposition shows the convergence to equilibrium of the equation with cut-off operators. This contains the main ingredients of the proof of the convergence in the non cut-off case.
\begin{proposition}\label{Propo:ConvergenceEqCutoff}
Let $f$ be a positive, classical solution in $C([0,\infty),$ $C^1(\mathcal{S}(x)))$ $\cap$ $ C^1((0,\infty),$ $C^1(\mathcal{S}(x)))$ of \eqref{PhononEqC} on $\mathcal{S}(x)$, with the initial condition $f_0\in C(\mathcal{S}(x))$, $f_0\ge0$. Let $ E_x \in\mathbb{R}_+$ be a  constant  and  
\begin{equation}\label{Propo:ConvergenceEqCutoff:1}
\begin{aligned}
\int_{\mathcal{S}(x)}\frac{1}{a_x   }\mathrm{d}k \ = & \ E_x \ = \ \int_{\mathcal{S}(x)}{\omega(k)}{f_0(k)}\mathrm{d}k,
\end{aligned}
\end{equation}
has a unique solution $a_x\in\mathbb{R}_+$; the local equilibrium on $\mathcal{S}(x)$ can be uniquely determined as
\begin{equation}\label{Propo:ConvergenceEqCutoff:3}
\mathcal{F}^c(k) \ = \ \frac{1}{a_x\omega(k)}.
\end{equation}
 Then, the following limits always hold true, 
 \begin{equation}\label{Propo:ConvergenceEqCutoff:4}
\lim_{t\to\infty}\left\|f(t,\cdot)- \mathcal{F}^c\right\|_{L^1(\mathcal{S}(x))}=0.
\end{equation}
and
\begin{equation}\label{Propo:ConvergenceEqCutoff:5}
\lim_{t\to\infty}\left|\int_{\mathcal{S}(x)}\ln[f]\mathrm{d}k -\int_{\mathcal{S}(x)}\ln\left[\mathcal{F}^c\right]\mathrm{d}k\right|=0.
\end{equation}
If, in addition, there is a positive constant $M^*>0$ such that $f(t,k)<M^*$ for all $t\in[0,\infty)$ and for all $k\in\mathcal{S}(x)$, then 
 \begin{equation}\label{Propo:ConvergenceEqCutoff:6}
\lim_{t\to\infty}\left\|f(t,\cdot)- \mathcal{F}^c\right\|_{L^p(\mathcal{S}(x))}=0,
\ \ \ \ \forall p\in[1,\infty).
\end{equation}
If we suppose further that $f_0(k)>0$ for all $k\in\mathcal{S}(x)$, there exists a constant $M_*$ such that $f(t,k)>M_*$ for all $t\in[0,\infty)$ and for all $k\in\mathcal{S}(x)$.
\end{proposition}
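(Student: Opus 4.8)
The plan is to run an entropy/entropy-dissipation argument anchored on the identity \eqref{TimeDerivativeEntro4C}, combined with the structural characterisation of equilibria and a relative-entropy (Csisz\'ar--Kullback--Pinsker-type) inequality to turn subsequential information into genuine convergence of the whole family.

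First I would record the monotonicity and boundedness of the entropy. By \eqref{TimeDerivativeEntro4C}--\eqref{TimeDerivativeEntro5C} one has $\partial_t S_c[f] = D_c[f] = \mathbb{D}_c[g] \ge 0$, so $t \mapsto S_c[f(t,\cdot)]$ is non-decreasing. Because $\ln$ is concave and, by Corollary~\ref{Cor-ConservatioMomentumC}, $f(t,\cdot)$ keeps the local energy $E_x$ and momentum $M_x$ for all $t$, the equilibrium $\mathcal{F}^c$ maximises $\int_{\mathcal{S}(x)}\ln(\cdot)$ subject to these two constraints (its inverse $a_x\omega+b_x\cdot k$ plays the role of the Lagrange multiplier), whence $S_c[f(t)] \le S_c[\mathcal{F}^c]$. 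Thus $S_c[f(t)] \uparrow S_\infty \le S_c[\mathcal{F}^c]$ and $\int_0^\infty D_c[f(s)]\,ds = S_\infty - S_c[f_0] < \infty$; in particular there is $t_n \to \infty$ along which $D_c[f(t_n)] \to 0$, that is, the weighted dissipation integral in \eqref{TimeDerivativeEntro5C} vanishes.

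Next I would identify the limit. From Proposition~\ref{Propo:LowerBound} the solution is bounded below (and two-sidedly bounded once the hypothesis $f < M^*$ is in force), while the energy bound $\int_{\mathcal{S}(x)} f \le E_x/\omega_0$ together with $\omega \ge \omega_0$ gives $L^1$ control; this yields enough compactness to extract from $\{g(t_n)\}$, $g = 1/f$, a limit $g_\infty$. Passing to the limit in the dissipation (lower semicontinuity of the quadratic functional, together with the kernel bounds of Proposition~\ref{Propo:Measure} and Corollary~\ref{Coll:Edges}) forces $g_\infty(k) = g_\infty(k_1) + g_\infty(k_2)$ for a.e. resonant triple in $\mathcal{S}(x)$, so $g_\infty$ is a collisional invariant and, by Proposition~\ref{Propo:CollsionInvariant}, equals $a\omega + b\cdot k$. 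Conservation of $(E_x,M_x)$ in the limit together with the uniqueness hypothesis \eqref{Propo:ConvergenceEqCutoff:1} pins down $a = a_x$, $b = b_x$, so the limit is exactly $\mathcal{F}^c$ and $S_c[f(t_n)] \to S_c[\mathcal{F}^c]$; fed back through the monotonicity this gives $S_\infty = S_c[\mathcal{F}^c]$, which is precisely \eqref{Propo:ConvergenceEqCutoff:5}. I expect this limit-passage --- making the compactness rigorous, commuting the weak limit with the nonlinear inversion $g \mapsto 1/g$, and passing to the limit inside the cubic dissipation --- to be the main obstacle; the estimate of Proposition~\ref{Propo:EntropyC} is the quantitative device that links the vanishing dissipation to the vanishing of $\|f(t_n) - \mathcal{F}^c\|$ in measure and keeps this step under control.

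Finally I would upgrade to full convergence. Writing the relative entropy $\Sigma[f(t)] := S_c[\mathcal{F}^c] - S_c[f(t)] = \int_{\mathcal{S}(x)}\big[u - 1 - \ln u\big]\,dk$ with $u = f/\mathcal{F}^c$ (the two constraints let me add the free multiplier term $\int_{\mathcal{S}(x)} \mathcal{E}^c(f - \mathcal{F}^c)\,dk = 0$), the integrand is non-negative, grows linearly as $u \to \infty$ and quadratically near $u = 1$; since $\mathcal{F}^c$ is bounded above and below on $\mathcal{S}(x)$, a Csisz\'ar--Kullback--Pinsker-type inequality yields $\|f(t) - \mathcal{F}^c\|_{L^1(\mathcal{S}(x))}^2 \lesssim \Sigma[f(t)]$. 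Since $\Sigma[f(t)] = S_c[\mathcal{F}^c] - S_c[f(t)] \to 0$, this gives \eqref{Propo:ConvergenceEqCutoff:4} for the whole family rather than a subsequence, and no upper bound on $f$ is needed because the linear growth of the integrand absorbs the region where $f$ is large. For \eqref{Propo:ConvergenceEqCutoff:6}, under $f < M^*$ I would interpolate $\|f - \mathcal{F}^c\|_{L^p} \le \|f - \mathcal{F}^c\|_{L^1}^{1/p}\|f - \mathcal{F}^c\|_{L^\infty}^{1 - 1/p}$, the second factor being bounded by $M^* + \|\mathcal{F}^c\|_{L^\infty}$. The concluding lower bound is immediate from Proposition~\ref{Propo:LowerBound}: when $f < M^*$ uniformly, $\sup_{s\in[0,t]}\|f(s,\cdot)\|_{C(\mathcal{S}(x))} \le M^*$, so $f(t,k) > f_*/M^* =: M_* > 0$ for all $t \ge 0$ and all $k \in \mathcal{S}(x)$.
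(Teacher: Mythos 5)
Your skeleton is the paper's skeleton: the entropy monotonicity from \eqref{TimeDerivativeEntro4C}--\eqref{TimeDerivativeEntro5C}, the bound $S_c[f(t)]\le S_c[\mathcal{F}^c]$ and the Csisz\'ar--Kullback--Pinsker-type inequality $\|f-\mathcal{F}^c\|_{L^1(\mathcal{S}(x))}\lesssim\left[S_c[\mathcal{F}^c]-S_c[f]\right]^{1/2}$ are exactly Lemma \ref{Propo:Entropy2C}, the times $t_n\to\infty$ with vanishing dissipation correspond to the paper's dyadic-interval argument, the "subsequential identification plus monotonicity" upgrade to the full limits \eqref{Propo:ConvergenceEqCutoff:4}--\eqref{Propo:ConvergenceEqCutoff:5} is the paper's endgame, and the final lower bound is the paper's own use of Proposition \ref{Propo:LowerBound}. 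Your interpolation $\|f-\mathcal{F}^c\|_{L^p}\le\|f-\mathcal{F}^c\|_{L^1}^{1/p}\|f-\mathcal{F}^c\|_{L^\infty}^{1-1/p}$ is a legitimate, and in fact simpler, substitute for the paper's Egorov argument for \eqref{Propo:ConvergenceEqCutoff:6}.

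However, there is a genuine gap at the step you yourself call the main obstacle: passing from $D_c[f(t_n)]\to 0$ to $f(t_n)\to\mathcal{F}^c$. The two tools you propose there do not work. First, the compactness claim: the lower bound of Proposition \ref{Propo:LowerBound} is $f(t,k)>f_*/\sup_{s\in[0,t]}\|f(s,\cdot)\|_{C(\mathcal{S}(x))}$, which may degenerate to $0$ as $t\to\infty$; two-sided pointwise bounds are available only under the extra hypothesis $f<M^*$, which you are not allowed to use for \eqref{Propo:ConvergenceEqCutoff:4}--\eqref{Propo:ConvergenceEqCutoff:5}; and an $L^1$ bound alone gives no weak $L^1$ compactness (no uniform integrability is established). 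Second, "lower semicontinuity of the quadratic functional" fails: the dissipation integrand $(g_1+g_2-g)^2/(gg_1g_2)$ is not jointly convex on the positive orthant (restricting to $g=1$, $g_1=g_2=t$ gives $(2t-1)^2/t^2$, which is concave for $t>3/4$), and even weak-$*$ convergence of $g(t_n)$ cannot be pushed through this non-convex trilinear expression on the resonant manifold. This is precisely why the paper introduces the cut-off operators $\chi_N$ of Section \ref{Sec:Cutoff}, which confine $f$, $1/f$ and $|\nabla f|$ to a window where all dangerous factors are two-sidedly controlled, and then applies the Kolmogorov--Riesz theorem not to $g^n$ itself but to the collision averages $\mathcal{G}^N_1[g^n]$, $\mathcal{G}^N_2[g^n]$ and $\tilde{\mathbb{Q}}_c^{N,-}[g^n]$, whose $L^p$-equicontinuity comes from the Lipschitz continuity of the index functionals (Proposition \ref{Propo:Lip}); the strong limits of these averages are combined with the weak limits of $g^n\chi_N[f^n]$, $f^n\chi_N[f^n]$, $\chi_N[f^n]$ to produce a functional identity, only then is the collisional-invariant classification (Proposition \ref{Propo:CollsionInvariant}) invoked, and the admissibility of $(E_x,M_x)$ is what shows the cut-off is inactive for $N$ large, yielding a.e. convergence of $f^n$ itself. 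Without this averaging/cut-off mechanism, or an equivalent substitute, your plan does not close; Proposition \ref{Propo:EntropyC}, which you correctly point to, is only usable after the cubic denominator and the factor $f$ have been removed by the cut-off, so citing it does not by itself bridge the gap.
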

We need the following Lemma, whose proof could be found in the Appendix. 
\begin{lemma}\label{Propo:Entropy2C}
Let $\mathcal{S}(x)$ be a collisonal invariant region and $f$ be a positive function  such that $f\omega\in L^1(\mathcal{S}(x))$. Let 
\begin{equation}
\label{Propo:Entropy2C:EquilibriumC}
\mathcal{F}^c(k)  \ = \ \frac{1}{a_x\omega(k)} \ = : \ \frac{1}{\mathcal{E}^c(k)} ,
\end{equation}
where the constant $a_x\in\mathbb{R}_+$  such that $\mathcal{F}^c(k) >0$ for all $k\in\mathcal{S}(x)$. 

Suppose, in addition, that
\begin{equation}
\label{Propo:Entropy2C:ConservationC}
\int_{\mathcal{S}(x)}f(k)\omega(k)\mathrm{d}k \ = \  \int_{\mathcal{S}(x)}\mathcal{F}^c(k)\omega(k)\mathrm{d}k.
\end{equation}

 Then, the following inequalities always hold true
 \begin{equation}
\label{Propo:Entropy2C:0}
0 \le  \  S_c[\mathcal{F}^c] \ - \ S_c[f],
\end{equation} 
and
\begin{equation}
\label{Propo:Entropy2C:1}
\|f-\mathcal{F}^c\|_{L^1(\mathcal{S}(x))} 
 \lesssim  \  \left[S_c[\mathcal{F}^c] \ - \ S_c[f]\right]^\frac12,
\end{equation} 
in which the constant on the right hand side does not depend on $f$; $S_c[f]$ is defined in \eqref{EntropyC}. 
\end{lemma}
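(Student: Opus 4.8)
The plan is to reduce both assertions \eqref{Propo:Entropy2C:0} and \eqref{Propo:Entropy2C:1} to a single \emph{entropy identity} that rewrites the entropy gap as a relative-entropy integral. Setting $u := f/\mathcal{F}^c$ and $\Psi(u) := u - 1 - \ln u$, I would first show that
\[
S_c[\mathcal{F}^c] - S_c[f] \ = \ \int_{\mathcal{S}(x)}\Psi\!\left(\frac{f}{\mathcal{F}^c}\right)\mathrm{d}k.
\]
Since $\Psi(u)\ge 0$ for all $u>0$, with equality only at $u=1$ (because $\Psi(1)=\Psi'(1)=0$ and $\Psi''>0$), the nonnegativity \eqref{Propo:Entropy2C:0} is immediate, and \eqref{Propo:Entropy2C:1} becomes a Csisz\'ar--Kullback--Pinsker-type estimate bounding an $L^1$ distance by the square root of a relative entropy.

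To establish the identity, I would write $S_c[\mathcal{F}^c]-S_c[f]=\int_{\mathcal{S}(x)}(\ln\mathcal{F}^c-\ln f)\,\mathrm{d}k=-\int_{\mathcal{S}(x)}\ln u\,\mathrm{d}k$ and then add and subtract the linear term $\int_{\mathcal{S}(x)}(u-1)\,\mathrm{d}k$. The key point is that this linear term vanishes: using $u=f\mathcal{E}^c$ with $\mathcal{E}^c=a_x\omega+b_x\cdot k$, one has $\int_{\mathcal{S}(x)}(u-1)\,\mathrm{d}k=a_x\int_{\mathcal{S}(x)}f\omega\,\mathrm{d}k+b_x\cdot\int_{\mathcal{S}(x)}fk\,\mathrm{d}k-m(\mathcal{S}(x))$, and the conservation hypotheses \eqref{Propo:Entropy2C:ConservationC} let me replace $f$ by $\mathcal{F}^c$ in the energy and momentum integrals, after which $\int_{\mathcal{S}(x)}\mathcal{F}^c\mathcal{E}^c\,\mathrm{d}k=m(\mathcal{S}(x))$ cancels everything. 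Hence $-\int\ln u=\int(u-1-\ln u)-\int(u-1)=\int\Psi(u)$, which is the claimed identity.

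For \eqref{Propo:Entropy2C:1} I would apply the Cauchy--Schwarz inequality with the weight $1/(1+u)$:
\[
\|f-\mathcal{F}^c\|_{L^1(\mathcal{S}(x))}=\int_{\mathcal{S}(x)}\mathcal{F}^c|u-1|\,\mathrm{d}k\le\left(\int_{\mathcal{S}(x)}\frac{\mathcal{F}^c(u-1)^2}{1+u}\,\mathrm{d}k\right)^{\frac12}\left(\int_{\mathcal{S}(x)}\mathcal{F}^c(1+u)\,\mathrm{d}k\right)^{\frac12}.
\]
The second factor is controlled uniformly: $\int_{\mathcal{S}(x)}\mathcal{F}^c(1+u)\,\mathrm{d}k=\int_{\mathcal{S}(x)}\mathcal{F}^c\,\mathrm{d}k+\int_{\mathcal{S}(x)}f\,\mathrm{d}k\le 2E_x/\omega_0$, since $\omega\ge\omega_0>0$ and, by \eqref{Propo:Entropy2C:ConservationC}, $f$ and $\mathcal{F}^c$ share the common energy $E_x:=\int_{\mathcal{S}(x)}f\omega\,\mathrm{d}k$, so each of $\int f\,\mathrm{d}k$, $\int\mathcal{F}^c\,\mathrm{d}k$ is at most $E_x/\omega_0$. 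For the first factor I would use the elementary pointwise bound $\tfrac{(u-1)^2}{1+u}\le C\,\Psi(u)$ valid for all $u>0$ (the ratio is continuous on $(0,\infty)$, tends to $1$ as $u\to1$ and as $u\to\infty$ and to $0$ as $u\to0$, hence is bounded) together with $\mathcal{F}^c\le\|\mathcal{F}^c\|_{L^\infty(\mathcal{S}(x))}$, which gives $\int\tfrac{\mathcal{F}^c(u-1)^2}{1+u}\le C\|\mathcal{F}^c\|_\infty\int\Psi(u)=C\|\mathcal{F}^c\|_\infty[S_c[\mathcal{F}^c]-S_c[f]]$, and \eqref{Propo:Entropy2C:1} follows.

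The main obstacle is the large-$u$ regime, where $f$ is much larger than the equilibrium: there $(u-1)^2\sim u^2$ grows quadratically while $\Psi(u)\sim u$ only grows linearly, so a naive $L^2$-based estimate fails and the weight $1/(1+u)$ is essential to interpolate between the quadratic control near $u=1$ and the linear control at infinity. A secondary technical point to pin down is the uniform bound $\|\mathcal{F}^c\|_{L^\infty(\mathcal{S}(x))}<\infty$, that is, that $\mathcal{E}^c=a_x\omega+b_x\cdot k$ stays bounded away from $0$ on $\mathcal{S}(x)$; this follows from the continuity of $\mathcal{E}^c$, the boundedness of $\omega$ and of $k$ on the torus, and the admissibility hypothesis guaranteeing $\mathcal{E}^c>0$ on the (closure of the) region.
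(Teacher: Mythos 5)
Your proof is correct, but it follows a genuinely different route from the paper's. Both arguments share the same first step in disguise: you kill the linear term in the entropy gap using the conservation laws (via $\int_{\mathcal{S}(x)}(u-1)\,\mathrm{d}k=0$ with $u=f/\mathcal{F}^c$), exactly as the paper kills $\int_{\mathcal{S}(x)}\mathcal{E}^c(f-\mathcal{F}^c)\,\mathrm{d}k$ after Taylor-expanding $\ln f$ around $\mathcal{F}^c$ with the integral remainder $\int_0^1(1-t)(f-\mathcal{F}^c)^2/\Psi_t\,\mathrm{d}t$, where $\Psi_t=[\mathcal{F}^c+t(f-\mathcal{F}^c)]^2$. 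Where you diverge is in the passage from entropy to $L^1$: you run a weighted Cauchy--Schwarz (Csisz\'ar--Kullback--Pinsker) argument with the weight $1/(1+u)$ and the elementary bound $(u-1)^2/(1+u)\lesssim u-1-\ln u$, so that the problematic regime $f\gg\mathcal{F}^c$ is tamed by the weight; the paper instead estimates only the one-sided quantity $(\mathcal{F}^c-f)_+$, on whose support the pointwise bound $\Psi_t\le[\mathcal{F}^c]^2$ holds, and then recovers the two-sided norm from the identity $|f-\mathcal{F}^c|=(f-\mathcal{F}^c)+2(\mathcal{F}^c-f)_+$ together with a second use of the conservation laws. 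The trade-off is in what each route implicitly requires of the equilibrium: the paper needs $\mathcal{F}^c\in L^2(\mathcal{S}(x))$ (its ``explicitly computed'' factor), while you need $\mathcal{F}^c\in L^\infty(\mathcal{S}(x))$ plus the bound $\int_{\mathcal{S}(x)}f\,\mathrm{d}k\le E_x/\omega_0$, which you correctly extract from energy conservation and $\omega\ge\omega_0>0$. Your $L^\infty$ requirement is nominally stronger, and you are right to flag it: the lemma's hypothesis only gives $\mathcal{E}^c>0$ pointwise on $\mathcal{S}(x)$, which is a countable union of closed sets and need not be compact, so a uniform lower bound on $\mathcal{E}^c$ is not automatic --- but the paper's $L^2$ requirement carries the same unstated caveat, and in both proofs the implicit regularity concerns only the equilibrium, so the constant remains independent of $f$ as the statement demands.
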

\begin{proof} We divide the proof in to several steps.

{\bf Step 1: Entropy estimates.}
Let us first recall \eqref{TimeDerivativeEntro5C}, which is written as follows
\begin{equation*}
\begin{aligned}
 \  \partial_t\int_{\mathcal{S}(x)}\ln(f)\mathrm{d}k
 \ = & \int_{\mathcal{S}(x)\times\mathcal{S}(x)\times\mathcal{S}(x)}[\omega\omega_1\omega_2]^{-1}\delta(k-k_1-k_2)\delta(\omega-\omega_1-\omega_2)\times\\
 \times  &\frac{\left[g_1+g_2-g\right]^2}{gg_1g_2}\mathrm{d}k\mathrm{d}k_1\mathrm{d}k_2.
\end{aligned}
\end{equation*}
The above identity shows that $\int_{\mathcal{S}(x)}\ln(f)\mathrm{d}k$ is an increasing function of time. In particular $\int_{\mathcal{S}(x)}\ln(f)\mathrm{d}k-\int_{\mathcal{S}(x)}\ln(f_0)\mathrm{d}k\ge0.$
Picking $n\in\mathbb{N}$ and considering the difference of the entropy at two times $n$ and  $n+1$ yields
\begin{equation*}
\begin{aligned}
& \  \left(\int_{\mathcal{S}(x)}\ln(f(2^{n+1},k))\mathrm{d}k -\int_{\mathcal{S}(x)}\ln(f_0(k))\mathrm{d}k\right)- \left(\int_{\mathcal{S}(x)}\ln(f(2^{n},k))\mathrm{d}k -\int_{\mathcal{S}(x)}\ln(f_0(k))\mathrm{d}k\right)\\
 \ = & \int_{2^{n}}^{2^{n+1}}\int_{\mathcal{S}(x)\times\mathcal{S}(x)\times\mathcal{S}(x)}[\omega\omega_1\omega_2]^{-1}\delta(k-k_1-k_2)\delta(\omega-\omega_1-\omega_2)\times\\
 \times  &\frac{\left[g_1+g_2-g\right]^2}{gg_1g_2}\mathrm{d}k\mathrm{d}k_1\mathrm{d}k_2dt.
\end{aligned}
\end{equation*}
Since the quantity $\int_{\mathcal{S}(x)}\ln(f(2^n,k))\mathrm{d}k -\int_{\mathcal{S}(x)}\ln(f_0(k))\mathrm{d}k$ is always positive, we deduce from the above that

\begin{equation*}
\begin{aligned}
& \  \int_{\mathcal{S}(x)}\ln(f(2^{n+1},k))\mathrm{d}k\ - \ \int_{\mathcal{S}(x)}\ln(f_0(k))\mathrm{d}k \ge \\
 \ \ge & \int_{2^{n}}^{2^{n+1}}\int_{\mathcal{S}(x)\times\mathcal{S}(x)\times\mathcal{S}(x)}[\omega\omega_1\omega_2]^{-1}\delta(k-k_1-k_2)\delta(\omega-\omega_1-\omega_2)\frac{\left[g_1+g_2-g\right]^2}{gg_1g_2}\mathrm{d}k\mathrm{d}k_1\mathrm{d}k_2dt.
\end{aligned}
\end{equation*}
By Lemma \ref{Propo:Entropy2C}, applied to the left hand side of the above inequality, we find
\begin{equation}\label{Propo:ConvergenceEqCutoff:E1}
\begin{aligned}
& \  \int_{\mathcal{S}(x)}\ln(\mathcal{F}^c(k))\mathrm{d}k  \ - \ \int_{\mathcal{S}(x)}\ln(f_0(k))\mathrm{d}k  \ge \\
 \ \ge & \int_{2^{n}}^{2^{n+1}}\int_{\mathcal{S}(x)\times\mathcal{S}(x)\times\mathcal{S}(x)}[\omega\omega_1\omega_2]^{-1}\delta(k-k_1-k_2)\delta(\omega-\omega_1-\omega_2)\frac{\left[g_1+g_2-g\right]^2}{gg_1g_2}\mathrm{d}k\mathrm{d}k_1\mathrm{d}k_2dt,
\end{aligned}
\end{equation}
which, after dividing both sides by $2^n$, implies
\begin{equation}\label{Propo:ConvergenceEqCutoff:E2}
\begin{aligned}
& \  \frac{1}{2^n}\left[\int_{\mathcal{S}(x)}\ln(\mathcal{F}^c(k))\mathrm{d}k  \ \ - \ \int_{\mathcal{S}(x)}\ln(f_0(k))\mathrm{d}k\right]  \ge \\
 \ \ge & \frac{1}{2^n}\int_{2^{n}}^{2^{n+1}}\int_{\mathcal{S}(x)\times\mathcal{S}(x)\times\mathcal{S}(x)}[\omega\omega_1\omega_2]^{-1}\delta(k-k_1-k_2)\delta(\omega-\omega_1-\omega_2)\frac{\left[g_1+g_2-g\right]^2}{gg_1g_2}\mathrm{d}k\mathrm{d}k_1\mathrm{d}k_2dt.
\end{aligned}
\end{equation}
As a consequence, there exists a sequence of times $t_n\in[2^n,2^{n+1}]$ such that
\begin{equation}\label{Propo:ConvergenceEqCutoff:E3}
\begin{aligned}
  & \lim_{n\to\infty}\Big[\int_{\mathcal{S}(x)\times\mathcal{S}(x)\times\mathcal{S}(x)}[\omega\omega_1\omega_2]^{-1}\delta(k-k_1-k_2)\delta(\omega-\omega_1-\omega_2)\times\\
 \times  &\frac{\left[g_1(t_n)+g_2(t_n)-g(t_n)\right]^2}{g(t_n)g_1(t_n)g_2(t_n)}\mathrm{d}k\mathrm{d}k_1\mathrm{d}k_2\Big] \ = \ 0.
\end{aligned}
\end{equation}
For the sake of simplicity, we denote $g(t_n)$ and $f(t_n)$ by $g^n$ and $f^n$.

{\bf Step 2: The convergence.} 
%
%We know that $ g^n\chi_N<\frac{1}{\underline{f}}$. This bound shows that the sequence $\{g^n\}$ is indeed weakly compact in $L^1(\mathcal{S}(x))$. Therefore, there exists a subsequence of $\{g^n\}$, still denoted by  $\{g^n\}$ and a function $g^\infty\in L^1(\mathcal{S}(x))$ such that $\int_{\mathcal{S}(x)}\varphi g^n\mathrm{d}k$ converges to $\int_{\mathcal{S}(x)}\varphi g^\infty\mathrm{d}k$ for any test function $\varphi$ in $L^\infty(\mathcal{S}(x))$. As a consequence, we can modify $g^\infty$ on a set of measure $0$, such that $0\le g^\infty \le \frac{1}{\underline{f}}$. 

%By the compactness of the sequence $\{\mathbb{Q}_c^{N,-}[g^n]\}$ in $L^1(\mathcal{S}(x))\cap C(\mathcal{S}(x))$, already shown in Step 2, there exists a function $\varsigma\in L^1(\mathcal{S}(x))\cap C(\mathcal{S}(x)),$ such that $\lim_{n\to \infty}\mathbb{Q}_c^{N,-}[g^n] \ = \ \varsigma$ in both norms of $L^1(\mathcal{S}(x))$ and $C(\mathcal{S}(x)).$

Taking advantage of the fact $g^n\le 2N$ in the cut-off region of the operator $\chi_N^*$, the following limit can be deduced from \eqref{Propo:ConvergenceEqCutoff:E3}
\begin{equation}\label{Propo:ConvergenceEqCutoff:E8}
\begin{aligned}
  & \lim_{n\to\infty}\Big[\int_{\mathcal{S}(x)\times\mathcal{S}(x)\times\mathcal{S}(x)}[\omega\omega_1\omega_2]^{-1}\delta(k-k_1-k_2)\delta(\omega-\omega_1-\omega_2)\chi_N^*\times\\
 \times  &{\left[g_1^n+g_2^n-g^n\right]^2}\mathrm{d}k\mathrm{d}k_1\mathrm{d}k_2\Big] \ = \ 0,
\end{aligned}
\end{equation}
in which the product $g^ng_1^ng_2^n$ has been eliminated. Since $g^ng_1^ng_2^n$ is removed, the inequality \eqref{Propo:EntropyC:0} can be applied, leading to another limit
 \begin{equation}\label{Propo:ConvergenceEqCutoff:E9}
\begin{aligned}
& \lim_{n\to\infty}\int_{\mathcal{S}(x)}\sqrt{f^n\left|\mathbb{Q}_c^{N,+}[g^n]-\mathbb{Q}_c^{N,-}[g^n]\right|}\mathrm{d}k \ =  \ 0.
\end{aligned}
\end{equation}
The above expression contains $f^n$, which can be, again, eliminated using the lower bound $f^n\ge \frac{1}{2N}$ in the cut-off region, yielding
\begin{equation}\label{Propo:ConvergenceEqCutoff:E10}
\begin{aligned}
& \lim_{n\to\infty}\int_{\mathcal{S}(x)}\sqrt{\left|\mathbb{Q}_c^{N,+}[g^n]-\mathbb{Q}_c^{N,-}[g^n]\right|}\mathrm{d}k \ =  \ 0.
\end{aligned}
\end{equation}
Replacing $\mathbb{Q}_c^{N,+}[g^n] =  g^n\mathbb{L}^N_c[g^n]$ in the above formula leads to
\begin{equation}\label{Propo:ConvergenceEqCutoff:E11}
\begin{aligned}
& \lim_{n\to\infty}\int_{\mathcal{S}(x)}\sqrt{\left|g^n\mathbb{L}^N_c-\mathbb{Q}_c^{N,-}[g^n]\right|}\mathrm{d}k \ =  \ 0.
\end{aligned}
\end{equation}
Notice that $g^n\mathbb{L}^N_c=  g^n\chi_N[g^n]\tilde{\mathbb{L}}^N_c,$ in which $\tilde{\mathbb{L}}^N_c$ takes the following form
\begin{equation}\label{Propo:ConvergenceEqCutoff:FixedE1}
\begin{aligned}
\tilde{\mathbb{L}}^N_c    \ := \ & \mathcal{G}^N_1[g^n] \ + \  \mathcal{G}^N_2[g^n]\\
 \ := \ & \int_{\mathcal{S}(x)\times \mathcal{S}(x)}\chi_N[g^n(k_1)]\chi_N[g^n(k_2)]\delta(k-k_1-k_2)\delta(\omega-\omega_1-\omega_2)\mathrm{d}k_1\mathrm{d}k_2\\
 &  + 2\int_{\mathcal{S}(x)\times \mathcal{S}(x)}\chi_N[g^n(k_1)]\chi_N[g^n(k_2)]\delta(k_1-k-k_2)\delta(\omega_1-\omega-\omega_2)\mathrm{d}k_1\mathrm{d}k_2.
\end{aligned}
\end{equation}
Let us consider the first sequence $\{\mathcal{G}^N_1[g^n]\}$. We will show that this sequence is equicontinuous in all $L^p(\mathcal{S}(x))$ with $1\le p<\infty$.  This, by the Kolmogorov-Riesz theorem \cite{hanche2010kolmogorov} implies the strong convergence of $\{\mathcal{G}^N_1[g^n]\}$ towards a function $\mathcal{G}_1$ in  $L^p(\mathcal{S}(x))$ with $1\le p<\infty$. To see this, let us consider any vector $k'$ belonging to a ball $B(O,\delta)$ centered at the origin and with radius $\delta$, and estimate the difference $ \mathcal{G}^N_1[g^n](\cdot+k')- \mathcal{G}^N_1[g^n](\cdot)$ in the $L^p$-norm 
\begin{equation}
\label{Propo:ConvergenceEqCutoff:FixedE2}
\begin{aligned}
&\int_{\mathcal{S}(x)}|\mathcal{G}^N_1[g^n](k+k')-\mathcal{G}^N_1[g^n](k)|^p\mathrm{d}k\\ 
\ = \ &  \int_{\mathcal{S}(x)}\Big|\int_{\mathcal{S}(x)}\Big[\chi_N[g^n(k'+k-k_1)]\delta(\omega(k')-\omega(k_1)-\omega(k'+k-k_1))-\\
&- \chi_N[g^n(k-k_1)]\delta(\omega(k)-\omega(k_1)-\omega(k-k_1))\Big]\chi_N[g^n(k_1)]\mathrm{d}k_1\Big|^p\mathrm{d}k.
\end{aligned}
\end{equation}
To estimate the above quantity, we will use the triangle inequality, as follows
\begin{equation}
\label{Propo:ConvergenceEqCutoff:FixedE3}
\begin{aligned}
&\int_{\mathcal{S}(x)}|\mathcal{G}^N_1[g^n](k+k')-\mathcal{G}^N_1[g^n](k)|^p\mathrm{d}k\\ 
\ \lesssim \ &  \int_{\mathcal{S}(x)}\Big|\int_{\mathcal{S}(x)}|\chi_N[g^n(k'+k-k_1)]-\chi_N[g^n( k-k_1)]|\times\\
&\indent\times\delta(\omega(k'+k)-\omega(k_1)-\omega(k'+k-k_1))\chi_N[g^n(k_1)]\mathrm{d}k_1\\
&+\int_{\mathcal{S}(x)}\chi_N[g^n(k-k_1)]|\delta(\omega(k'+k)-\omega(k_1)-\omega(k'+k-k_1))\\
&\indent -\delta(\omega(k)-\omega(k_1)-\omega(k-k_1))|\chi_N[g^n(k_1)]\mathrm{d}k_1\Big|^p\mathrm{d}k.
\end{aligned}
\end{equation}
In the right hand side of this equality, we have the sum of two integrals inside the power of order $p$. To facilitate the computations, we use Young's inequality to split this into two separate integrals as
\begin{equation}
\label{Propo:ConvergenceEqCutoff:FixedE4}
\begin{aligned}
&\int_{\mathcal{S}(x)}|\mathcal{G}^N_1[g^n](k+k')-\mathcal{G}^N_1[g^n](k)|^p\mathrm{d}k\\ 
\ \lesssim \ &  \int_{\mathcal{S}(x)}\Big|\int_{\mathcal{S}(x)}|\chi_N[g^n(k'+k-k_1)]-\chi_N[g^n(k-k_1)]|\times\\
& \times\delta(\omega(k'+k)-\omega(k_1)-\omega(k'+k-k_1))\chi_N[g^n(k_1)]\mathrm{d}k_1\Big|^p\mathrm{d}k\\
&+\int_{\mathcal{S}(x)}\Big|\int_{\mathcal{S}(x)}\chi_N[g^n(k-k_1)]|\delta(\omega(k'+k)-\omega(k_1)-\omega(k'+k-k_1))\\
&\indent -\delta(\omega(k)-\omega(k_1)-\omega(k-k_1))|\chi_N[g^n(k_1)]\mathrm{d}k_1\Big|^p\mathrm{d}k.
\end{aligned}
\end{equation}
We can choose $\delta$ small such that $\chi_N[g^n(k'+k-k_1)]-\chi_N[g^n(k-k_1)]$ is small, uniformly in $k$ and $k_1$, thanks to the cut-off property $\frac{1}{N}\le |f^n(k)|, |\nabla f^n(k)|\le N$ in the cut-off region. Combining this observation, with Proposition \ref{Lemma:RestrictionSx}, Corollary \ref{Coll:Edges} and the boundedness of $\chi_N[g^n(k_1)]$, we can choose $\delta$ small enough, depending on a small $\epsilon>0$, such that the first term on the right hand side is smaller than $\epsilon^p/2$. The second term on the right hand side can also be bounded by $\epsilon^p/2$ using Proposition \ref{Propo:Lip} and the fact that $\chi_N[g^n(k-k_1)]$ and $\chi_N[g^n(k_1)]$ are both bounded by $1$. As a result, for any small constant $\epsilon>0$, we can choose $\delta$ such that for any $k'\in B(O,\delta)$, 
\begin{equation}
\label{Propo:ConvergenceEqCutoff:FixedE5}
\int_{\mathcal{S}(x)}|\mathcal{G}^N_1[g^n](k+k')-\mathcal{G}^N_1[g^n](k)|^p\mathrm{d}k 
\ \lesssim \ \epsilon^p,
\end{equation}
which shows that the sequence $\mathcal{G}^N_1[g^n]$ is indeed equicontinuous in $L^p(\mathcal{S}(x))$ and the existence of $\sigma_1\in L^p(\mathcal{S}(x))$ satisfying $\lim_{n\to\infty}\mathcal{G}^N_1[g^n]=\sigma_1$ in $L^p(\mathcal{S}(x))$ for all $p\in[1,\infty)$ is guaranteed by the Kolmogorov-Riesz theorem \cite{hanche2010kolmogorov}.

The same argument can be applied to $\mathcal{G}^N_2[g^n]$, leading to  the existence of $\sigma_2\in L^p(\mathcal{S}(x))$ satisfying $\lim_{n\to\infty}\mathcal{G}^N_2[g^n]=\sigma_2$  in $L^p(\mathcal{S}(x))$ for all $p\in[1,\infty)$  by the Kolmogorov-Riesz theorem \cite{hanche2010kolmogorov}.  As a result $\lim_{n\to\infty} \tilde{\mathbb{L}}_c^N = \sigma = \sigma_1 
+ \sigma_2$  in $L^p(\mathcal{S}(x))$ for all $p\in[1,\infty)$. 

Similarly, if we define
\begin{equation}\label{Propo:ConvergenceEqCutoff:FixedE6}
\begin{aligned}
&\tilde{\mathbb{Q}}_c^{N,-}[g](k) \ = \ \tilde{\mathbb{Q}}_c^{N,-,1}[g](k)+\tilde{\mathbb{Q}}_c^{N,-,2}[g](k)+\tilde{\mathbb{Q}}_c^{N,-,3}[g](k) \ :=  \\
 \ =  & \ 2\int_{\mathcal{S}(x)\times \mathcal{S}(x)}\chi_N[1/g](k_1)\chi_N[1/g](k_2)[\omega\omega_1\omega_2]^{-1}\delta(k-k_1-k_2)\delta(\omega-\omega_1-\omega_2){g_1}\mathrm{d}k_1\mathrm{d}k_2\\
& \ +2\int_{\mathcal{S}(x)\times \mathcal{S}(x)}\chi_N[1/g](k_1)\chi_N[1/g](k_2)[\omega\omega_1\omega_2]^{-1}\delta(k_1-k-k_2)\delta(\omega_1-\omega-\omega_2){g_1}\mathrm{d}k_1\mathrm{d}k_2\\
& \ -2\int_{\mathcal{S}(x)\times \mathcal{S}(x)}\chi_N[1/g](k_1)\chi_N[1/g](k_2)[\omega\omega_1\omega_2]^{-1}\delta(k_1-k-k_2)\delta(\omega_1-\omega-\omega_2){g_2}\mathrm{d}k_1\mathrm{d}k_2,
\end{aligned}
\end{equation}
the Kolmogorov-Riesz theorem \cite{hanche2010kolmogorov} can  be used in the same manner to deduce the existence of a function $\varsigma$ such that we also have $\lim_{n\to \infty}\tilde{\mathbb{Q}}_c^{N,-}[g^n] \ = \ \varsigma$ in $L^p(\mathcal{S}(x))$ for all $p\in[1,\infty)$.

Now, the fact that $\lim_{n\to \infty}\tilde{\mathbb{Q}}_c^{N,-}[g^n] \ = \ \varsigma$  and $\lim_{n\to\infty} \tilde{\mathbb{L}}_c^N = \sigma $ can be used to replace the quantity $\mathbb{Q}_c^{N,-}[g^n]$ by $\varsigma$ and the quantity $\tilde{\mathbb{L}}_c^N$ by $\sigma$  in \eqref{Propo:ConvergenceEqCutoff:E9} and  \eqref{Propo:ConvergenceEqCutoff:E11} to have

\begin{equation}\label{Propo:ConvergenceEqCutoff:E12}
\begin{aligned}
& \lim_{n\to\infty}\int_{\mathcal{S}(x)}\sqrt{\left|\sigma\chi_N[f^n]-f^n\chi_N[f^n]\varsigma\right|}\mathrm{d}k \ =  \ 0,
\end{aligned}
\end{equation} 
and 
\begin{equation}\label{Propo:ConvergenceEqCutoff:E12}
\begin{aligned}
& \lim_{n\to\infty}\int_{\mathcal{S}(x)}\sqrt{\left|g^n\chi_N[g^n]\sigma-\varsigma\chi_N[f^n]\right|}\mathrm{d}k \ =  \ 0.
\end{aligned}
\end{equation}
Due to its boundedness,  the sequences $\{g^n\chi_N[f^n]\}$, $\{f^n\chi_N[f^n]\}$ and $\{\chi_N[f^n]\}$ converge weakly to $g^\infty_N$, $f^\infty_N$ and $\xi_N^\infty$ in $L^1(\mathcal{S}(x))$, it follows immediately that $g^\infty_N\sigma=\xi_N^\infty\varsigma$  and $\xi^\infty_N\sigma=f^\infty_N\varsigma$.

By a similar argument as above, $\{\chi_N[f^n]\}$ is also equicontinuous in $L^p(\mathcal{S}(x))$ and then $\lim_{n\to\infty}\chi_N[f^n]=\xi_N^\infty$ in $L^p(\mathcal{S}(x))$ for all $p\in[1,\infty)$ by the Kolmogorov-Riesz theorem \cite{hanche2010kolmogorov}.  As a consequence,
\begin{equation*}
\begin{aligned}
\varsigma(k) \ = 
 \   & \ 2\int_{\mathcal{S}(x)\times \mathcal{S}(x)}\xi_N^\infty(k_1)\xi_N^\infty(k_2)[\omega\omega_1\omega_2]^{-1}\delta(k-k_1-k_2)\delta(\omega-\omega_1-\omega_2){g^\infty_N(k_1)}\mathrm{d}k_1\mathrm{d}k_2\\
& \ +2\int_{\mathcal{S}(x)\times \mathcal{S}(x)}\xi_N^\infty(k_1)\xi_N^\infty(k_2)[\omega\omega_1\omega_2]^{-1}\delta(k_1-k-k_2)\delta(\omega_1-\omega-\omega_2)g^\infty_N(k_1)\mathrm{d}k_1\mathrm{d}k_2\\
& \ -2\int_{\mathcal{S}(x)\times \mathcal{S}(x)}\xi_N^\infty(k_1)\xi_N^\infty(k_2)[\omega\omega_1\omega_2]^{-1}\delta(k_1-k-k_2)\delta(\omega_1-\omega-\omega_2)g^\infty_N(k_2)\mathrm{d}k_1\mathrm{d}k_2,
\end{aligned}
\end{equation*}
and
\begin{equation*}
\begin{aligned}
\sigma(k)   \ = \ & \int_{\mathcal{S}(x)\times \mathcal{S}(x)}\xi_N^\infty(k_1)\xi_N^\infty(k_2)\delta(k-k_1-k_2)\delta(\omega-\omega_1-\omega_2)\mathrm{d}k_1\mathrm{d}k_2\\
 &  + 2\int_{\mathcal{S}(x)\times \mathcal{S}(x)}\xi_N^\infty(k_1)\xi_N^\infty(k_2)\delta(k_1-k-k_2)\delta(\omega_1-\omega-\omega_2)\mathrm{d}k_1\mathrm{d}k_2,
\end{aligned}
\end{equation*}
which can be combined with \eqref{Propo:ConvergenceEqCutoff:E12} and the fact that $\{g^n\chi_N[f^n]\}$, $\{f^n\chi_N[f^n]\}$ converge weakly to $g^\infty_N$, $f^\infty_N$  to give  
\begin{equation}\label{Propo:ConvergenceEqCutoff:E12a}
\begin{aligned}
  & \int_{\mathcal{S}(x)\times \mathcal{S}(x)}g^\infty_N(k)\xi_N^\infty(k)\xi_N^\infty(k_1)\xi_N^\infty(k_2)\delta(k-k_1-k_2)\delta(\omega-\omega_1-\omega_2)\mathrm{d}k_1\mathrm{d}k_2\\
 &  + 2\int_{\mathcal{S}(x)\times \mathcal{S}(x)}g^\infty_N(k)\xi_N^\infty(k)\xi_N^\infty(k_1)\xi_N^\infty(k_2)\delta(k_1-k-k_2)\delta(\omega_1-\omega-\omega_2)\mathrm{d}k_1\mathrm{d}k_2\\
\ = 
 \   & \ 2\int_{\mathcal{S}(x)\times \mathcal{S}(x)}\xi_N^\infty(k)\xi_N^\infty(k_1)\xi_N^\infty(k_2)[\omega\omega_1\omega_2]^{-1}\delta(k-k_1-k_2)\delta(\omega-\omega_1-\omega_2){g^\infty_N(k_1)}\mathrm{d}k_1\mathrm{d}k_2\\
& \ +2\int_{\mathcal{S}(x)\times \mathcal{S}(x)}\xi_N^\infty(k)\xi_N^\infty(k_1)\xi_N^\infty(k_2)[\omega\omega_1\omega_2]^{-1}\delta(k_1-k-k_2)\delta(\omega_1-\omega-\omega_2)g^\infty_N(k_1)\mathrm{d}k_1\mathrm{d}k_2\\
& \ -2\int_{\mathcal{S}(x)\times \mathcal{S}(x)}\xi_N^\infty(k)\xi_N^\infty(k_1)\xi_N^\infty(k_2)[\omega\omega_1\omega_2]^{-1}\delta(k_1-k-k_2)\delta(\omega_1-\omega-\omega_2)g^\infty_N(k_2)\mathrm{d}k_1\mathrm{d}k_2,
\end{aligned}
\end{equation}
for a.e. $k$ in $\mathcal{S}(x)$. 

From \eqref{Propo:ConvergenceEqCutoff:E12a}, we deduce that $$g^\infty_N(k)\xi_N^\infty(k)=g^\infty_N(k_1)\xi_N^\infty(k_1)+g^\infty_N(k_2)\xi_N^\infty(k_2),$$ when $k=k_1+k_2$ and $\omega(k)=\omega(k_1)+\omega(k_2)$, for a.e. $k$ in $\mathcal{S}(x)$.
The proofs of Proposition \ref{Propo:CollsionInvariant} and Lemma \ref{Lemma:EquilibriumCutoff} can then be redone, yielding $g^\infty_N(k) \xi_N^\infty(k)= A_N \omega(k) =:\mathcal{E}^c(k)> 0$ for some  constant $A_N\in\mathbb{R}$. These constants are subjected to the conservation of energy 
\begin{equation}
\label{Propo:ConvergenceEqCutoff:E13}
\begin{aligned}
\int_{\mathcal{S}(x)}\frac{\omega(k) }{A_N \omega(k)} \mathrm{d}k\ = & \ \lim_{n\to\infty}\int_{\mathcal{S}(x)}\omega(k)  f^n\chi_N[f^n]\mathrm{d}k
{\ =: \ E^N_x.}
\end{aligned}
\end{equation}
In addition, we have $f_N^\infty = \frac{1}{A_N \omega(k) }$.
Since  $\lim_{N\to\infty}E_x^N=E_x$, when $N$ is large enough $\frac{1}{N}<g^\infty_N(k), f_N^\infty(k)<N$ for all $k\in\mathcal{S}(x)$. As a consequence, $g^n$ and $f^n$ converge almost everywhere to $g^\infty_N(k)$, and $f_N^\infty(k)$. 

The fact that $f^n$ converges to $f_N^\infty(k)$ almost everywhere, when $N$ is sufficiently large, ensures the existence of $N_0>0$ such that $f_N^\infty(k)=f_M^\infty(k)$ for all $N,M>N_0$. Passing to the limits $N\to\infty$ in \eqref{Propo:ConvergenceEqCutoff:E13}, we find $A_N=A$  for all $N>N_0$, with
\begin{equation}
\label{Propo:ConvergenceEqCutoff:E13}
\begin{aligned}
\int_{\mathcal{S}(x)}\frac{\omega(k) }{A \omega(k) } \mathrm{d}k\ = & \ E_x.
\end{aligned}
\end{equation}
As a result,
 $$\lim_{n\to\infty}f^n(k)=\frac{1}{A \omega(k) }=:\mathcal{F}^c$$  almost everywhere on $\mathcal{S}(x)$, which then implies 
 $$\liminf_{n\to\infty}\int_{\mathcal{S}(x)}\ln[f]\mathrm{d}k \ge \int_{\mathcal{S}(x)}\ln[\mathcal{F}^c]\mathrm{d}k,$$
 by Fatou's Lemma. Therefore, due to Lemma \ref{Propo:Entropy2C}
$$\lim_{n\to\infty}[S_c[\mathcal{F}^c] \ - \ S_c[f^n]]=0,$$
leading to
  $$\lim_{t\to\infty}[S_c[\mathcal{F}^c] \ - \ S_c[f(t)]]=0.$$
 By \eqref{Propo:Entropy2C:1}, we finally obtain
$$\lim_{t\to\infty}
\|f-\mathcal{F}^c\|_{L^1(\mathcal{S}(x))} =0.$$
{\bf Step 3: Additional assumption $f(t,k)<M^*$ for all $t\in[0,\infty)$ and for all $k\in\mathcal{S}(x)$.} 
Suppose, in addition, that $f(t,k)<M^*$ for all $t\in[0,\infty)$. By Egorov's theorem, for all $\delta>0$, there exists a set $\mathcal{V}_\delta$, whose measure $m(\mathcal{V}_\delta)$ is smaller than $\delta$ and $f^n$ converges uniformly to $f^\infty(k)$ on $\mathcal{S}(x)\backslash  \mathcal{V}_\delta$. Since $\frac{1}{N}<f^\infty_N(k)<N$, there exists an integer $n_\delta$ such that for all $n>n_\delta$, the inequality $\frac{1}{N}<f^n(k)<N$ holds true for all $k\in \mathcal{S}(x)\backslash  \mathcal{V}_\delta$. As a consequence, for each $\epsilon>0$
$$\|f-\mathcal{F}^c\|_{L^p(\mathcal{S}(x))} \le C \|f-\mathcal{F}^c\|_{L^\infty(\mathcal{S}(x)\backslash  \mathcal{V}_\delta)}  \ + \  C m(\mathcal{V}_\delta)^\frac{1}{p} \ \le C \|f-\mathcal{F}^c\|_{L^\infty(\mathcal{S}(x)\backslash  \mathcal{V}_\delta)}  \ + \  C\delta^\frac{1}{p},$$
where $C$ is a universal constant, for all $1<p<\infty$. 

For any $\epsilon>0$, we can choose $\delta>0$ and a time $t_\delta$ such that for $t>t_\delta$, $ C\delta^\frac{1}{p}<\epsilon/2$ and $C \|f-\mathcal{F}^c\|_{L^\infty(\mathcal{S}(x)\backslash  \mathcal{V}_\delta)}<\epsilon/2$. That implies the strong convergence of $f$ towards $\mathcal{F}^c$ in $L^p(\mathcal{S}(x)$ for all $1<p<\infty$. 

Now, if $f_0(k)>0$ for all $k\in\mathcal{S}(x)$ and $f(t,k)<M^*$ for all $t\in[0,\infty)$ and for all $k\in\mathcal{S}(x)$, by Proposition \ref{Propo:LowerBound}, there exists a constant $M_*$ such that $f(t,k)>M_*$ for all $t\in[0,\infty)$ and for all $k\in\mathcal{S}(x)$.

 \end{proof} 

\subsection{Proof of Theorem \ref{Thm:MainClassical}}\label{Sec:Con2}
The proof of Theorem \ref{Thm:MainClassical} follows from Proposition \ref{Propo:ConvergenceEqCutoff} and Proposition \ref{NoCollisionEffect}.

\section{Appendix}

\subsection{Appendix A: Proof of Lemma \ref{Propo:Entropy2C}}

Define the functional
\begin{equation*}
\Psi_t(f,\mathcal{F}^c) \ = \ [\mathcal{F}^c \ +  \ t(f-\mathcal{F}^c)]^2.
\end{equation*}
It follows from the mean value theorem that
\begin{equation*}
0 \ \le \ \int_0^1\frac{(1-t)(f-\mathcal{F}^c)^2}{\Psi_t(f,\mathcal{F}^c)}\mathrm{d}t  \ = \ s_c[\mathcal{F}^c] \ - \ s_c[f] \ +  \ s'_c[\mathcal{F}^c](f-\mathcal{F}^c).
\end{equation*}
Since $s'(y)=1/y$, we find $s'[\mathcal{F}^c(k)]=a_x\omega(k).$ That leads to
\begin{equation*}
0 \ \le \ \int_0^1\frac{(1-t)(f-\mathcal{F}^c)^2}{\Psi_t(f,\mathcal{F}^c)}\mathrm{d}t \ = \ s_c[\mathcal{F}^c] \ - \ s_c[f] \ +  \ (a_x\omega(k))(f-\mathcal{F}^c).
\end{equation*}

Integrating both sides of the above inequality on $\mathcal{S}(x)$ yields
\begin{equation*}
\begin{aligned}
0 \ \le & \ \int_{\mathcal{S}(x)}\int_0^1\frac{(1-t)(f-\mathcal{F}^c)^2}{\Psi_t(f,\mathcal{F}^c)}\mathrm{d}t \mathrm{d}k\\
 = & \  \int_{\mathcal{S}(x)}s_c[\mathcal{F}^c]\mathrm{d}k \ - \ \int_{\mathcal{S}(x)}s_c[f]\mathrm{d}k \ +  \ \int_{\mathcal{S}(x)}(a_x\omega(k))(f-\mathcal{F}^c)\mathrm{d}k,
\end{aligned}
\end{equation*}
which, by the fact that $$\int_{\mathcal{S}(x)}(a_x\omega(k))(f-\mathcal{F}^c)\mathrm{d}k=0,$$ implies
\begin{equation}\label{Propro2C:e1a}
0 \ \le \ \int_{\mathcal{S}(x)}\int_0^1\frac{(1-t)(f-\mathcal{F}^c)^2}{\Psi_t(f,\mathcal{F}^c)}\mathrm{d}t \mathrm{d}k \ \le \ S_c[\mathcal{F}^c] \ - \ S_c[f].
\end{equation}
Observing that
\begin{equation*}
\begin{aligned}
(\mathcal{F}^c-f)_+ \ = \ & 2\int_{0}^1\frac{\sqrt{1-t}(\mathcal{F}^c-f)_+}{\sqrt{\Psi_t(f,\mathcal{F}^c)}}\sqrt{(1-t)\Psi_t(f,\mathcal{F}^c)}\mathrm{d}t,
\end{aligned}
\end{equation*} 
and applying H\"older's inequality to the right hand side, we obtain the following inequality
\begin{equation*}
\begin{aligned}
(\mathcal{F}^c-f)_+ \ \le \ & 2\left[\int_{0}^1\frac{{(1-t)}(\mathcal{F}^c-f)^2}{{\Psi_t(f,\mathcal{F}^c)}}\mathrm{d}t\right]^\frac12\left[\int_{0}^1{(1-t)\Psi_t(f,\mathcal{F}^c)}\mathrm{d}t\right]^\frac12.
\end{aligned}
\end{equation*} 
Now, observe that for $k\in\mathcal{S}(x)$ satisfying $\mathcal{F}^c(k)>f(k),$ then $$0<\Psi_t(f,\mathcal{F}^c)(k)\le [\mathcal{F}^c(k)]^2$$ for all $t\in[0,1]$. This fact can reduce the above inequality to
\begin{equation*}
\begin{aligned}
(\mathcal{F}^c-f)_+  \ \le \ &2\left[\int_{0}^1\frac{{(1-t)}(\mathcal{F}^c-f)^2}{{\Psi_t(f,\mathcal{F}^c)}}\mathrm{d}t\right]^\frac12\left[\int_{0}^1{(1-t)[\mathcal{F}^c(k)]^2}\mathrm{d}t\right]^\frac12,
\end{aligned}
\end{equation*} 
which, by integrating in $k$
\begin{equation*}
\begin{aligned}
\int_{\mathcal{S}(x)}(\mathcal{F}^c-f)_+ \mathrm{d}k \ \le \ & 2\int_{\mathcal{S}(x)}\left[\int_{0}^1\frac{{(1-t)}(\mathcal{F}^c-f)^2}{{\Psi_t(f,\mathcal{F}^c)}}\mathrm{d}t\right]^\frac12\left[\int_{0}^1{(1-t)[\mathcal{F}^c(k)]^2}\mathrm{d}t\right]^\frac12\mathrm{d}k,
\end{aligned}
\end{equation*} 
and applying H\"older's inequality to the right hand side, gives
\begin{equation*}
\begin{aligned}
\int_{\mathcal{S}(x)}(\mathcal{F}^c-f)_+ \mathrm{d}k \ \le \ &2\left[ \int_{\mathcal{S}(x)}\int_{0}^1\frac{{(1-t)}(\mathcal{F}^c-f)^2}{{\Psi_t(f,\mathcal{F}^c)}}\mathrm{d}t\mathrm{d}k\right]^\frac12\left[\int_{\mathcal{S}(x)}\int_{0}^1{(1-t)[\mathcal{F}^c(k)]^2}\mathrm{d}t\mathrm{d}k\right]^\frac12.
\end{aligned}
\end{equation*} 
Indeed, the second term with the bracket on the right hand side can be computed explicitly, that implies
\begin{equation*}
\begin{aligned}
\int_{\mathcal{S}(x)}(\mathcal{F}^c-f)_+ \mathrm{d}k \ \lesssim \ &\left[ \int_{\mathcal{S}(x)}\int_{0}^1\frac{{(1-t)}(\mathcal{F}^c-f)^2}{{\Psi_t(f,\mathcal{F}^c)}}\mathrm{d}t\mathrm{d}k\right]^\frac12.
\end{aligned}
\end{equation*} 
The above inequality can be combined with \eqref{Propro2C:e1a} to become
\begin{equation*}
\int_{\mathcal{S}(x)}(\mathcal{F}^c-f)_+ \mathrm{d}k \ \lesssim \ \left[S_c[\mathcal{F}^c] \ - \ S_c[f]\right]^\frac12.
\end{equation*} 
Using the boundedness of the dispersion relation $\omega(k)$, we find
\begin{equation*}
\int_{\mathcal{S}(x)}(\mathcal{F}^c-f)_+\omega(k) \mathrm{d}k \ \lesssim \ \int_{\mathcal{S}(x)}(\mathcal{F}^c-f)_+ \mathrm{d}k \ \lesssim \ \left[S_c[\mathcal{F}^c] \ - \ S_c[f]\right]^\frac12.
\end{equation*} 
Now, from the identity 
\begin{equation*}
|f-\mathcal{F}^c| \ = \ f-\mathcal{F}^c \ + \ 2 (\mathcal{F}-f)_+,
\end{equation*}
the above gives
\begin{equation*}\begin{aligned}
\int_{\mathcal{S}(x)}|f-\mathcal{F}^c|\omega(k) \mathrm{d}k \ = & \ \int_{\mathbb{T}^3}(f-\mathcal{F}^c)\omega(k) \mathrm{d}k \ + \ \int_{\mathcal{S}(x)}2 (\mathcal{F}^c-f)_+\omega(k) \mathrm{d}k \\
 \lesssim & \ \int_{\mathcal{S}(x)}(f-\mathcal{F}^c)\omega(k) \mathrm{d}k \ + \ 2\left[S_c[\mathcal{F}^c] \ - \ S_c[f]\right]^\frac12.
\end{aligned}
\end{equation*} 
From the hypothesis $$\int_{\mathcal{S}(x)}(f-\mathcal{F}^c)\omega(k) \mathrm{d}k=0,$$ we then infer from the above inequality that
\begin{equation*}\begin{aligned}
\int_{\mathcal{S}(x)}|f-\mathcal{F}^c|\omega(k) \mathrm{d}k 
 \lesssim & \  \left[S_c[\mathcal{F}^c] \ - \ S_c[f]\right]^\frac12.
\end{aligned}
\end{equation*} 
Using the fact that $\omega(k)\ge \omega_0$, we obtain
\begin{equation*}\begin{aligned}
\int_{\mathcal{S}(x)}|f-\mathcal{F}^c|\mathrm{d}k 
 \lesssim & \  \left[S_c[\mathcal{F}^c] \ - \ S_c[f]\right]^\frac12.
\end{aligned}
\end{equation*}

\bibliographystyle{plain}

\bibliography{WaveTurbulence}

\def\cprime{$'$} \def\cprime{$'$} \def\cprime{$'$} \def\cprime{$'$}
  \def\cprime{$'$} \def\cprime{$'$} \def\cprime{$'$} \def\cprime{$'$}
  \def\cprime{$'$} \def\cprime{$'$}
\begin{thebibliography}{10}

\bibitem{AlonsoGambaBinh}
R.~Alonso, I.~M. Gamba, and M.-B. Tran.
\newblock The {C}auchy problem and {BEC} stability for the quantum
  {B}oltzmann-{G}ross-{P}itaevskii system for bosons at very low temperature.
\newblock {\em arXiv preprint arXiv:1609.07467}, 2016.

\bibitem{ampatzoglou2024inhomogeneous}
I.~Ampatzoglou, J.~K. Miller, N.~Pavlovi{\'c}, and M.~Taskovi{\'c}.
\newblock Inhomogeneous wave kinetic equation and its hierarchy in polynomially
  weighted linfty spaces.
\newblock {\em arXiv preprint arXiv:2405.03984}, 2024.

\bibitem{aoki2002nonequilibrium}
K.~Aoki and D.~Kusnezov.
\newblock Nonequilibrium statistical mechanics of classical lattice varphi4
  field theory.
\newblock {\em Annals of Physics}, 295(1):50--80, 2002.

\bibitem{basile2016thermal}
G.~Basile, C.~Bernardin, M.~Jara, T.~Komorowski, and S.~Olla.
\newblock Thermal conductivity in harmonic lattices with random collisions.
\newblock In {\em Thermal transport in low dimensions}, pages 215--237.
  Springer, 2016.

\bibitem{benney1967propagation}
D.~J. Benney and A.~C. Newell.
\newblock The propagation of nonlinear wave envelopes.
\newblock {\em Journal of mathematics and Physics}, 46(1-4):133--139, 1967.

\bibitem{benney1969random}
D.~J. Benney and A.~C. Newell.
\newblock Random wave closures.
\newblock {\em Studies in Applied Mathematics}, 48(1):29--53, 1969.

\bibitem{benney1966nonlinear}
D.~J. Benney and P.~G. Saffman.
\newblock Nonlinear interactions of random waves in a dispersive medium.
\newblock {\em Proc. R. Soc. Lond. A}, 289(1418):301--320, 1966.

\bibitem{bretherton1964resonant}
F.~P. Bretherton.
\newblock Resonant interactions between waves. the case of discrete
  oscillations.
\newblock {\em Journal of Fluid Mechanics}, 20(3):457--479, 1964.

\bibitem{burq1993controle}
N.~Burq.
\newblock {\em Contr{\^o}le de l'{\'e}quation des plaques en pr{\'e}sence
  d'obstacles strictement convexes}.
\newblock Soci{\'e}t{\'e} math{\'e}matique de France, 1993.

\bibitem{cercignani1999relativistic}
C.~Cercignani and G.~M. Kremer.
\newblock On relativistic collisional invariants.
\newblock {\em Journal of statistical physics}, 96:439--445, 1999.

\bibitem{collot2024stability}
C.~Collot, H.~Dietert, and P.~Germain.
\newblock Stability and cascades for the {K}olmogorov--{Z}akharov spectrum of
  wave turbulence.
\newblock {\em Archive for Rational Mechanics and Analysis}, 248(1):7, 2024.

\bibitem{CraciunBinh}
G.~Craciun and M.-B. Tran.
\newblock A reaction network approach to the convergence to equilibrium of
  quantum boltzmann equations for bose gases.
\newblock {\em ESAIM: Control, Optimisation and Calculus of Variations}, 27:83,
  2021.

\bibitem{dolce2024convergence}
M.~Dolce and R.~Grande.
\newblock On the convergence rates of discrete solutions to the wave kinetic
  equation.
\newblock {\em Mathematics In Engineering}, 6(4):536--558, 2024.

\bibitem{escobedo2024instability}
M.~Escobedo and A.~Menegaki.
\newblock Instability of singular equilibria of a wave kinetic equation.
\newblock {\em arXiv preprint arXiv:2406.05280}, 2024.

\bibitem{EscobedoBinh}
M.~Escobedo and M.-B. Tran.
\newblock Convergence to equilibrium of a linearized quantum {B}oltzmann
  equation for bosons at very low temperature.
\newblock {\em Kinetic and Related Models}, 8(3):493--531, 2015.

\bibitem{EscobedoVelazquez:2015:FTB}
M.~Escobedo and J.~J.~L. Vel{\'a}zquez.
\newblock Finite time blow-up and condensation for the bosonic {N}ordheim
  equation.
\newblock {\em Invent. Math.}, 200(3):761--847, 2015.

\bibitem{EscobedoVelazquez:2015:OTT}
M.~Escobedo and J.~J.~L. Vel{\'a}zquez.
\newblock On the theory of weak turbulence for the nonlinear {S}chr\"odinger
  equation.
\newblock {\em Mem. Amer. Math. Soc.}, 238(1124):v+107, 2015.

\bibitem{escobedo2024entropy}
M.l Escobedo, P.~Germain, J.~La, and A.~Menegaki.
\newblock Entropy maximizers for kinetic wave equations set on tori.
\newblock {\em arXiv preprint arXiv:2412.16026}, 2024.

\bibitem{fu2006optimality}
X.~Fu, X.~Zhang, and E.~Zuazua.
\newblock On the optimality of some observability inequalities for plate
  systems with potentials.
\newblock In {\em Phase space analysis of partial differential equations},
  pages 117--132. Springer, 2006.

\bibitem{GambaSmithBinh}
I.~M. Gamba, L.~M. Smith, and M.-B. Tran.
\newblock On the wave turbulence theory for stratified flows in the ocean.
\newblock {\em M3AS: Mathematical Models and Methods in Applied Sciences. Vol.
  30, No. 1 105-137}, 2020.

\bibitem{germain2017optimal}
P.~Germain, A.~D. Ionescu, and M.-B. Tran.
\newblock Optimal local well-posedness theory for the kinetic wave equation.
\newblock {\em Journal of Functional Analysis}, 279(4):108570, 2020.

\bibitem{germain2024stability}
P.~Germain, J.~La, and A.~Menegaki.
\newblock Stability of rayleigh-jeans equilibria in the kinetic fpu equation.
\newblock {\em arXiv preprint arXiv:2409.01507}, 2024.

\bibitem{hanche2010kolmogorov}
H.~Hanche-Olsen and H.~Holden.
\newblock The kolmogorov--riesz compactness theorem.
\newblock {\em Expositiones Mathematicae}, 28(4):385--394, 2010.

\bibitem{haraux1989series}
A.~Haraux.
\newblock S{\'e}ries lacunaires et contr{\^o}le semi-interne des vibrations
  d'une plaque rectangulaire.
\newblock {\em Journal de Math{\'e}matiques pures et appliqu{\'e}es},
  68(4):457--465, 1989.

\bibitem{hasselmann1962non}
K.~Hasselmann.
\newblock On the non-linear energy transfer in a gravity-wave spectrum part 1.
  general theory.
\newblock {\em Journal of Fluid Mechanics}, 12(04):481--500, 1962.

\bibitem{hasselmann1974spectral}
K.~Hasselmann.
\newblock On the spectral dissipation of ocean waves due to white capping.
\newblock {\em Boundary-Layer Meteorology}, 6(1-2):107--127, 1974.

\bibitem{hebey2008introduction}
E.~Hebey and B.~Pausader.
\newblock An introduction to fourth order nonlinear wave equations.
\newblock {\em Rn}, 2:H2, 2008.

\bibitem{lebeau1992control}
G.~Lebeau.
\newblock Control for hyperbolic equations.
\newblock {\em Journ{\'e}es {\'e}quations aux d{\'e}riv{\'e}es partielles},
  pages 1--24, 1992.

\bibitem{lee2005thermal}
G.~R. Lee-Dadswell, B.~G. Nickel, and C.~G. Gray.
\newblock Thermal conductivity and bulk viscosity in quartic oscillator chains.
\newblock {\em Physical Review E-Statistical, Nonlinear, and Soft Matter
  Physics}, 72(3):031202, 2005.

\bibitem{lefevere2004perturbative}
R.~Lefevere and A.~Schenkel.
\newblock Perturbative analysis of anharmonic chains of oscillators out of
  equilibrium.
\newblock {\em Journal of statistical physics}, 115:1389--1421, 2004.

\bibitem{lepri2000memory}
S.~Lepri.
\newblock Memory effects and heat transport in one-dimensional insulators.
\newblock {\em The European Physical Journal B-Condensed Matter and Complex
  Systems}, 18:441--446, 2000.

\bibitem{levandosky2000time}
S.~P. Levandosky and W.~A. Strauss.
\newblock Time decay for the nonlinear beam equation.
\newblock {\em Methods and Applications of Analysis}, 7(3):479--488, 2000.

\bibitem{lions1988controlabilite}
J.-L. Lions.
\newblock Contr{\^o}labilit{\'e} exacte, perturbations et stabilisation de
  syst{\`e}mes distribu{\'e}s. tome 1.
\newblock {\em RMA}, 8, 1988.

\bibitem{love2013treatise}
A.~E.~H. Love.
\newblock {\em A treatise on the mathematical theory of elasticity}.
\newblock Cambridge university press, 2013.

\bibitem{lukkarinen2008anomalous}
J.~Lukkarinen and H.~Spohn.
\newblock Anomalous energy transport in the fpu-$\beta$ chain.
\newblock {\em Communications on Pure and Applied Mathematics: A Journal Issued
  by the Courant Institute of Mathematical Sciences}, 61(12):1753--1786, 2008.

\bibitem{LukkarinenSpohn:WNS:2011}
J.~Lukkarinen and H.~Spohn.
\newblock Weakly nonlinear {S}chr\"odinger equation with random initial data.
\newblock {\em Invent. Math.}, 183(1):79--188, 2011.

\bibitem{menegaki20222}
A.~Menegaki.
\newblock L2-stability near equilibrium for the 4 waves kinetic equation.
\newblock {\em Kinetic and Related Models}, 17(4):514--532, 2024.

\bibitem{Nazarenko:2011:WT}
S.~Nazarenko.
\newblock {\em Wave turbulence}, volume 825 of {\em Lecture Notes in Physics}.
\newblock Springer, Heidelberg, 2011.

\bibitem{newell2011wave}
A.~C. Newell and B.~Rumpf.
\newblock Wave turbulence.
\newblock {\em Annual review of fluid mechanics}, 43:59--78, 2011.

\bibitem{newell2013wave}
A.~C. Newell and B.~Rumpf.
\newblock Wave turbulence: a story far from over.
\newblock In {\em Advances in wave turbulence}, pages 1--51. World Scientific,
  2013.

\bibitem{nguyen2017quantum}
T.~T. Nguyen and M.-B. Tran.
\newblock On the {K}inetic {E}quation in {Z}akharov's {W}ave {T}urbulence
  {T}heory for {C}apillary {W}aves.
\newblock {\em SIAM J. Math. Anal.}, 50(2):2020--2047, 2018.

\bibitem{ToanBinh}
T.~T. Nguyen and M.-B. Tran.
\newblock Uniform in time lower bound for solutions to a quantum boltzmann
  equation of bosons.
\newblock {\em Archive for Rational Mechanics and Analysis}, 231(1):63--89,
  2019.

\bibitem{pausader2007scattering}
B.~Pausader.
\newblock Scattering and the levandosky--strauss conjecture for fourth-order
  nonlinear wave equations.
\newblock {\em Journal of Differential Equations}, 241(2):237--278, 2007.

\bibitem{pausader2009analyticity}
B.~Pausader and W.~Strauss.
\newblock Analyticity of the scattering operator for the beam equation.
\newblock {\em Discrete Contin. Dyn. Syst}, 25:617--626, 2009.

\bibitem{Peierls:1993:BRK}
R.~Peierls.
\newblock Zur kinetischen theorie der warmeleitung in kristallen.
\newblock {\em Annalen der Physik}, 395(8):1055--1101, 1929.

\bibitem{Peierls:1960:QTS}
R.~E. Peierls.
\newblock Quantum theory of solids.
\newblock In {\em Theoretical physics in the twentieth century ({P}auli
  memorial volume)}, pages 140--160. Interscience, New York, 1960.

\bibitem{PomeauBinh}
Y.~Pomeau and M.-B. Tran.
\newblock Statistical physics of non equilibrium quantum phenomena.
\newblock {\em Lecture Notes in Physics, Springer}, 2019.

\bibitem{smith2002generation}
L.~M. Smith and F.~Waleffe.
\newblock Generation of slow large scales in forced rotating stratified
  turbulence.
\newblock {\em Journal of Fluid Mechanics}, 451:145--168, 2002.

\bibitem{Binh1}
A.~Soffer and M.-B. Tran.
\newblock On the dynamics of finite temperature trapped bose gases.
\newblock {\em Advances in Mathematics}, 325:533--607, 2018.

\bibitem{soffer2019energy}
A.~Soffer and M.-B. Tran.
\newblock On the energy cascade of 3-wave kinetic equations: beyond
  kolmogorov--zakharov solutions.
\newblock {\em Communications in Mathematical Physics}, pages 1--48, 2019.

\bibitem{Spohn:TPB:2006}
H.~Spohn.
\newblock The phonon {B}oltzmann equation, properties and link to weakly
  anharmonic lattice dynamics.
\newblock {\em J. Stat. Phys.}, 124(2-4):1041--1104, 2006.

\bibitem{staffilani2024condensation}
G.~Staffilani and M.-B. Tran.
\newblock Condensation and non-condensation times for 4-wave kinetic equations.
\newblock {\em arXiv preprint arXiv:2407.18533}, 2024.

\bibitem{staffilani2024energy}
G.~Staffilani and M.-B. Tran.
\newblock On the energy transfer towards large values of wavenumbers for
  solutions of 4-wave kinetic equations.
\newblock {\em arXiv preprint arXiv:2407.18508}, 2024.

\bibitem{CraciunSmithBoldyrevBinh}
M.-B. Tran, G.~Craciun, L.~M. Smith, and S.~Boldyrev.
\newblock A reaction network approach to the theory of acoustic wave
  turbulence.
\newblock {\em Journal of Differential Equations}, 269(5):4332--4352, 2020.

\bibitem{zakharov2012kolmogorov}
V.~E. Zakharov, V.~S. L'vov, and G.~Falkovich.
\newblock {\em Kolmogorov spectra of turbulence I: Wave turbulence}.
\newblock Springer Science \& Business Media, 2012.

\bibitem{zuazua1987controlabilite}
E.~Zuazua and J.-L. Lions.
\newblock Contr{\^o}labilit{\'e} exacte d'un mod{\`e}le de plaques vibrantes en
  un temps arbitrairement petit.
\newblock {\em Comptes rendus de l'Acad{\'e}mie des sciences. S{\'e}rie 1,
  Math{\'e}matique}, 304(7):173--176, 1987.

\end{thebibliography}

\end{document}